\newcommand{\e}{\varepsilon}
\newcommand{\om}{\omega}
\newcommand{\LL}{\mathcal{L}}
\newcommand{\LH}{\mathfrak{L}}
\newcommand{\M}{\mathcal{M}}
\newcommand{\la}{\lambda}
\newcommand{\al}{\alpha}
\newcommand{\fy}{\varphi}
\newcommand{\p}{\partial}
\newcommand{\I}{\infty}
\newcommand{\ti}{\widetilde}
\newcommand{\R}{\mathbb{R}}
\newcommand{\C}{\mathbb{C}}
\newcommand{\N}{\mathbb{N}}
\newcommand{\Z}{\mathbb{Z}}
\newcommand{\F}{\mathcal{F}}
\renewcommand{\S}{\mathcal{S}}
\newcommand{\T}{\mathcal{T}}
\renewcommand{\Re}{\mathop{\mathrm{Re}}}
\renewcommand{\Im}{\mathop{\mathrm{Im}}}
\renewcommand{\bar}{\overline}
\renewcommand{\hat}{\widehat}
\newcommand{\ro}{\rho}
\numberwithin{equation}{section}
\newtheorem{thm}{Theorem}[section]
\newtheorem{cor}[thm]{Corollary}
\newtheorem{lem}[thm]{Lemma}
\newtheorem{prop}[thm]{Proposition}
\theoremstyle{remark}
\newcommand{\ran}{\rangle}
\newcommand{\lan}{\langle}
\newcommand{\weak}[1]{{\text{w-}#1}}
\newcommand{\lec}{\lesssim}
\newcommand{\gec}{\gtrsim}
\newcommand{\EQ}[1]{\begin{equation} \begin{split} #1 \end{split} \end{equation}}
\newcommand{\Del}[1]{}
\newcommand{\CAS}[1]{\begin{cases} #1 \end{cases}}
\newcommand{\pt}{&}
\newcommand{\pr}{\\ &}
\newcommand{\pq}{\quad}
\newcommand{\pn}{}
\newcommand{\prQQ}{\\ &\qquad\qquad}
\newcommand{\LR}[1]{{\lan #1 \ran}}
\newcommand{\de}{\delta}
\newcommand{\si}{\sigma}
\renewcommand{\t}{\tau}
\newcommand{\be}{\beta}
\newcommand{\ka}{\kappa}
\newcommand{\ga}{\gamma}
\newcommand{\x}{\xi}
\newcommand{\y}{\eta}
\newcommand{\z}{\zeta}
\newcommand{\s}{\sigma}
\newcommand{\na}{\nabla}
\renewcommand{\th}{\theta}
\newcommand{\supp}{\operatorname{supp}}
\newcommand{\D}{\mathcal{D}}
\newcommand{\B}{\mathcal{B}}
\newcommand{\HH}{\mathcal{H}}
\newcommand{\HL}{\mathfrak{H}}
\newcommand{\De}{\Delta}
\newcommand{\Om}{\Omega}
\newcommand{\IN}[1]{\text{ in }#1}
\newcommand{\mat}[1]{\begin{pmatrix} #1 \end{pmatrix}}
\newcommand{\Cu}{\bigcup}
\newcommand{\sg}{\mathfrak{s}}
\newcommand{\Sg}{\mathfrak{S}}
\newcommand{\sign}{\operatorname{sign}}
\newcommand{\WV}{\overrightarrow}
\newcommand{\K}{\mathcal{K}}
\newcommand{\uu}{w}
\newcommand{\VQ}{\mathfrak{Q}}
\newcommand{\A}{\mathcal{A}}
\newcommand{\nx}[1]{\ti{#1}}
\newcommand{\diff}[1]{{\triangleleft{#1}}}
\def\PS{\mathcal{PS}}
\def\calM{\mathcal{M}}
\newcommand{\Ex}{E_{ext}}
\def\spec{\mathrm{spec}}
\begin{document}
%%%%%%%%%%%%%%%%%%%%%%%%%%%%%%%%%%%%%

\author{K.~Nakanishi}
\address{Department of Mathematics, Kyoto University\\ Kyoto 606-8502, Japan}
\email{n-kenji@math.kyoto-u.ac.jp}

\author{W.~Schlag}
\address{Department of  Mathematics, The University of Chicago\\ Chicago, IL 60615, U.S.A.} 
\email{schlag@math.uchicago.edu} 

\thanks{The second author was supported in part by the National Science Foundation,  DMS-0617854  as well as by a Guggenheim fellowship.}

\title[Global dynamics for NLKG without radial assumption]{Global dynamics above the ground state \\ for the nonlinear Klein-Gordon equation \\ without a radial assumption}

\subjclass[2010]{35L70, 35Q55} 
\keywords{nonlinear wave equation, ground state, hyperbolic dynamics, stable manifold, unstable manifold, scattering theory, blow up}

\begin{abstract}
We extend our result \cite{NLKGrad} to the non-radial case, giving a complete classification of global dynamics of all solutions with energy at most slightly above that of the ground state for the nonlinear Klein-Gordon equation with the focusing cubic nonlinearity in three space dimensions.
\end{abstract}

\maketitle

%@@@@@@@@@@@@@@@@@@@@@@@@@@@@@@@@@@@@

\section{Introduction}
\label{sec:intro}

The nonlinear Klein-Gordon equation
\EQ{
\label{eq:NLKG3}
\Box u + u = \pm u^{3}, \quad (t,x)\in \R^{1+3}_{t,x}
}
has been considered by many authors. The signs $\pm u^{3}$ are known as focusing ($+$) and defocusing ($-$), respectively.
This reflects itself in the conserved energy
\EQ{
E(u,\dot u) = \int_{\R^{3}} \Bigl[ \frac12(|\nabla u|^{2}+|u|^{2}+|\dot u|^{2}) \mp  \frac14 u^{4}\Bigr]\, dx.
}
In either of these cases, short-time existence of smooth solutions is known, as well as 
global existence and scattering to zero for small data. For the defocusing case, one has global existence for all data as well as scattering in the energy class 
\EQ{
 \vec u(t):=(u(t),\dot u(t)) \in \HH:=H^{1}\times L^{2},} 
see Brenner~\cite{Bren1,Bren2}, Ginibre, Velo~\cite{GV1,GV2}, Morawetz, Strauss \cite{MoSt}, and Pecher~\cite{Pech}. As usual, 
 scattering of a solution $u$ to a static state $\fy$ refers to the following asymptotic behavior: there exists a solution $v$ of the free Klein-Gordon equation in the energy class such that 
\EQ{\label{scat def}
 \|\vec u(t)-\vec\fy-\vec v(t)\|_\HH \to 0,}
as either $t\to\I$ or $t\to-\I$ (depending on the context). 
When $\fy=0$, we sometimes say ``scattering'' instead of ``scattering to $0$''. 
See also Strauss~\cite{Strauss} and~\cite{ScatBlow} for a review of Strichartz estimates and wellposedness, as well as scattering in this setting.  

In this paper, we are only concerned with the focusing equation, for which smooth solutions can break down in finite time (which means that the $L^{\I}$-norm becomes infinite in finite time). More generally, the blow-up is defined for weak solutions in the energy class by divergence of the energy norm $\HH$ (which is more natural in view of the wellposedness theory, but these two definitions are equivalent for classical bounded\footnote{One can easily construct smooth global solutions with finite energy norm but with infinite $L^\I$ norm from the beginning, by superposing very sparse sequence of concentrated waves.} solutions). 
In fact, this happens for any data of negative energy, as was shown by Levine~\cite{Levine}.  
A natural question is then to decide which data lead to finite time blow-up versus scattering to zero. A first step in this direction was undertaken by Payne and Sattinger~\cite{PS}. 
Their result is formulated in terms of the so-called ground state, which is the unique positive radial decaying solution of 
\[
-\Delta Q + Q = Q^{3},
\]
see \cite{Strauss77,BerLions,Coff}. Amongst all stationary solutions of~\eqref{eq:NLKG3} in $H^{1}$, $\pm Q$ are unique  (up to translations)  with the
property that they minimize the action (or stationary energy)
\[
J(\fy) = \int_{\R^{3}} \Bigl[\frac12(|\nabla\fy|^{2}+|\fy|^{2})-\frac14|\fy|^{4} \Bigr] \, dx
\]
A scaling functional associated with $J$ is defined by 
\[
K(\fy)= \int_{\R^{3}} \big[ |\nabla\fy|^{2}+|\fy|^{2}-|\fy|^{4} \big]\, dx
\]
The significance of $K$ lies with the fact that 
\[
\inf\{ J(\fy)\mid K(\fy)=0,\ \fy\in H^{1}\setminus\{0\} \} = J(Q)
\]
As observed by Payne and Sattinger, this easily implies that the regions
\EQ{\label{eq:PSintro}
\PS_{+}&=\{ (u_{0}, u_{1})\in\HH\mid E(u_{0}, u_{1})<J(Q),\; K(u_{0})\ge0\} \\
\PS_{-}&=\{ (u_{0},  u_{1})\in\HH\mid E(u_{0}, u_{1})<J(Q),\; K(u_{0})< 0\} 
}
are invariant under the nonlinear flow in the phase space $\HH$. 
Moreover, they showed that solutions 
in~$\PS_{+}$ are global (in both time directions), whereas those in~$\PS_{-}$ blow up in finite time (in both time directions).  
In particular, the stationary solution $Q$ is unstable, see
also Shatah~\cite{Sha85} and Berestycki, Cazenave~\cite{BerCaz}. Scattering in $\PS_{+}$ was only recently shown by Ibrahim, Masmoudi, and Nakanishi~\cite{ScatBlow},
using a Kenig-Merle type argument originating from \cite{KM1,KM2}. 

This paper addresses the behavior of solutions whose energy exceeds that of $J(Q)$ by a small amount. More precisely, let $\vec u:= (u,\dot u)$ and set 
\EQ{ \label{def He}
 \HH^\e:=\{\vec u\in \HH \mid E_{m}(\vec u)<E(Q,0)+\e^2\}}
 with the {\em minimal energy} 
 \EQ{\label{eq:Em}
 E_{m}(\vec u):= |E(\vec u)^{2} - P(\vec u)^{2}|^{\frac12} \sign( E(\vec u)^{2} - P(\vec u)^{2} ) , 
 }
 where $P(\vec u) = \lan \dot u|\nabla u\ran$ is the conserved momentum. In contrast to $E$, the minimal energy $E_{m}$ is Lorentz invariant, 
 which makes it the right quantity to use in the nonradial case.  We call a solution with zero momentum {\em normalized}. 
 To every solution with $|E(\vec u)|>|P(\vec u)|$  there exists exactly one Lorentz transform\footnote{For subtle issues such as  the meaning of the Cauchy problem
 for Lorentz transformed solutions, and general wellposedness questions of transformed solutions,  see Section~\ref{sec:dyn} below.} 
 which reduces it to a normalized one, see~\eqref{eq:EPtrans}. If $|E(\vec u)|< |P(\vec u)|$, then there exists a Lorentz transform $L$ such that 
 $E(\vec u\circ L)<0$. But such solutions are known to blowup in finite positive and negative times, see Payne-Sattinger~\cite{PS}. If $|E(\vec u)|=|P(\vec u)|$, then along some sequence
 of Lorentz transforms $L_{j}$ we have $E(\vec u \circ L_{j})\to0$ as $j\to\I$. But then either $K(u\circ L_{j})<0$ for some $j$, which means that $u$
 blows up in both time directions by~\cite{PS}, or $u\circ L_{j}\to0$ strongly
 in $H^{1}$ so that $u$ exists globally. Thus, we can always talk about normalized solutions in those cases where~\cite{PS} does not apply, which are
 the ones relevant to this paper. 
 %Note that the only symmetry in $\HH$ is $u\mapsto-u$ in this setting.

\begin{thm}\label{thm:main}
Consider all solutions of NLKG~\eqref{eq:NLKG3} with initial data $\vec u(0)\in\HH^\e$ for some small $\e>0$. We prove that the solution set is decomposed into nine non-empty sets characterized as 
\begin{enumerate}
\item Scattering to $0$ for both  $t\to\pm\I$, 
\item Finite time  blow-up on both sides $\pm t>0$, 
\item Scattering to $0$ as $t\to\I$ and finite time  blow-up in $t<0$, 
\item Finite time  blow-up in $t>0$ and scattering to $0$ as $t\to-\I$, 
\item Trapped by $\pm Q$ for $t\to\I$ and scattering to $0$ as $t\to-\I$, 
\item Scattering to $0$ as $t\to\I$ and trapped by $\pm Q$ as $t\to-\I$, 
\item Trapped by $\pm Q$ for $t\to\I$ and finite time  blow-up in $t<0$, 
\item Finite time  blow-up in $t>0$ and trapped by $\pm Q$ as $t\to-\I$, 
\item Trapped by $\pm Q$  as $t\to\pm\I$, 
\end{enumerate}
where ``trapped by $\pm Q$" means that the normalized  solution stays in a $O(\e)$ neighborhood of $$\{\pm Q(\cdot+y)\mid y\in\R^{3}\}$$ forever after some time (or before some time). 
The initial data sets for (1)-(4), respectively, are open. 
\end{thm}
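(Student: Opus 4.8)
The plan is to combine the one-pass theorem (the dynamical dichotomy near $\pm Q$) with the hyperbolic structure of the linearized flow at $Q$, exactly as in the radial case \cite{NLKGrad}, but carrying the Lorentz reduction throughout. First I would linearize \eqref{eq:NLKG3} at $Q$: writing $u=Q+v$, the operator $\LL_+ = -\Delta+1-3Q^2$ has exactly one negative eigenvalue $-k^2$ with a positive ground state $\rho$, a kernel spanned by the translations $\partial_j Q$ ($j=1,2,3$), and is otherwise positive. This produces a one-dimensional unstable direction and a one-dimensional stable direction (the $e^{\pm kt}$ modes) together with a center-stable part of codimension one in the normalized energy surface. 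The key object is a scalar ``ejection'' coordinate $\lambda(t)$ measuring the hyperbolic part of $\vec u - \vec Q$, and a virial/sign functional $K$ (or a variant $K_{2,4}$) which controls whether a solution that leaves the $O(\e)$ tube does so on the $K<0$ side (finite-time blow-up by Payne--Sattinger \cite{PS}) or the $K>0$ side (scattering to $0$ by \cite{ScatBlow}). The one-pass (no-return) lemma asserts that a normalized solution can enter and exit the $\e$-neighborhood of $\{\pm Q(\cdot+y)\}$ at most once; after it exits, the sign of $K$ is determined and frozen, so the solution either scatters or blows up. This reduces the forward-in-time classification to three alternatives: trapped forever, scatter, or blow up; and symmetrically for backward time. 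Taking the nine combinations gives the nine sets, and the trapped-by-$\pm Q$ cases are exactly the center-stable/center-unstable manifolds.

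The second ingredient I would assemble is the construction of the center-stable manifold $\calM$ through $(Q,0)$ (and its translates), a Lipschitz graph of codimension one in $\HH$ over the center-stable subspace, characterized as the set of data whose forward trajectory is trapped by $\pm Q$; reversing time gives the center-unstable manifold. This is done by a Lyapunov--Perron fixed point argument using the Strichartz and dispersive estimates for the linearized evolution $e^{it\sqrt{1-\Delta}}$ restricted away from the unstable mode, together with the exponential dichotomy coming from $\pm k$. Because translations act on $Q$, one must quotient by the three-parameter translation group and allow the center variable to include these zero modes of $\LL_+$; the manifold is therefore really a manifold of dimension (codim $1$) modelled on (center)$\oplus\R^3_{\text{transl}}$. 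Having $\calM$ in hand, the trapped sets (5)--(9) are identified with intersections of $\calM$, its time-reversal $\calM^-$, and the scattering/blow-up regions in the complementary time direction, and one checks each of the nine is nonempty by a shooting/connectedness argument: the unstable mode provides a one-parameter family crossing from $\PS_+$ to $\PS_-$ through $\calM$, so by intermediate-value reasoning along this family (and its time-reversed analogue) all nine combinations are realized, at least after composing with a suitable small Lorentz boost to stay inside $\HH^\e$.

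The heart of the matter, and the step I expect to be the main obstacle, is the one-pass (no-return) lemma in the non-radial setting. In the radial case it rests on a virial identity combined with a monotonicity/convexity argument for a localized virial functional, controlling the error terms by the smallness $\e$ and by the exponential ejection rate. Without radial symmetry two new difficulties appear: the solution can translate (so the virial weight must be recentered along an approximate trajectory $y(t)$, and one must show $\dot y$ stays small and integrable outside the tube), and momentum is nonzero a priori, which is why $E_m$ and the Lorentz reduction are built into the statement. I would handle translation by a modulation argument, choosing $y(t)$ to keep $\langle v(t), \nabla Q(\cdot - y)\rangle$ small, bounding $\dot y$ by the hyperbolic and dispersive parts, and using a translated virial identity whose error from recentering is quadratically small; I would handle momentum by working only with normalized solutions, which is legitimate by the discussion preceding Theorem~\ref{thm:main} (the cases $|E|\le|P|$ being covered directly by \cite{PS}), and checking that all the perturbative estimates are uniform under the small Lorentz transform needed to normalize. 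The remaining pieces — the ejection lemma (once $\lambda$ is of size $\e$ and growing, it reaches size $\gtrsim\e^{0}$ in time $O(\log(1/\e))$ while $K$ acquires a definite sign), the variational lower bound forcing a ``gap'' between the tube and the regions where scattering/blow-up is already decided, and the openness of sets (1)--(4) — are then essentially as in \cite{NLKGrad}, modulo tracking the extra translation parameters, and I would present them in that order after the no-return lemma.
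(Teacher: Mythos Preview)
Your overall strategy matches the paper's: reduce to normalized (zero-momentum) solutions by Lorentz transform, set up the modulation $c(t)$ and symplectic decomposition $v=\lambda_+ g_+ + \lambda_- g_- + \gamma$, prove an ejection lemma, a variational lower bound defining the sign functional, then the one-pass theorem, and finally read off the trichotomy in each time direction. The identification of the no-return lemma as the main new difficulty, and of translation modulation (with $|\dot c|\lesssim \|v\|^2$) as the mechanism to control it, is exactly right.

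Two points deserve correction. First, and more substantively, ``$K>0$ side $\Rightarrow$ scattering by \cite{ScatBlow}'' is not enough: \cite{ScatBlow} only treats $E<J(Q)$. For $E$ slightly above $J(Q)$ you must rerun a Kenig--Merle concentration-compactness argument \emph{using the one-pass theorem} to rule out the critical element, and in the nonradial setting there is a genuine new issue: the nonlinear profiles extracted from a zero-momentum sequence need not themselves have zero momentum, so the ejection lemma cannot be applied to them directly. The paper handles this by Lorentz-transforming the single non-scattering profile back into $\HL_0$, applying the ejection analysis there, and then transforming back; this requires showing that global existence, finite $L^4_{t,x}$ norm, and the local Cauchy problem all transfer correctly under Lorentz transforms (the paper's Lemmas~\ref{global Lorentz} and~\ref{loc Lorentz}). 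You should flag this step.

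Second, the center-stable manifold construction in your second paragraph is not needed for Theorem~\ref{thm:main}; it is the content of Theorem~\ref{thm:main2}. For Theorem~\ref{thm:main} the trapped set is simply \emph{defined} as the complement of scattering and blow-up, and non-emptiness of all nine cells is inherited directly from the radial construction in \cite{NLKGrad} (radial solutions are already normalized, and one can perturb by a small nonradial $\gamma(0)$). Likewise, openness of the blow-up set in the nonradial case uses a small extra argument: one shows $K_0\to -\infty$ and $\langle u_1|u_2\rangle\to\infty$ near the blow-up time, then checks these persist for nearby data \emph{after} the small Lorentz transform to zero momentum, again via Lemma~\ref{loc Lorentz}. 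Finally, the variational lower bound away from $\pm Q$ cannot use the radial compact embedding; the paper replaces it by Lions' concentration-compactness, which you should mention since it is the one place in that lemma where nonradiality matters.
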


The radial case of this theorem was proved in~\cite{NLKGrad}. We follow the same approach as in~\cite{NLKGrad}, but with the added feature
of having to control a small translation vector, after the Lorentz transform to the normalized solution. 
As in the radial case, Theorem~\ref{thm:main} extends to all dimensions $x\in\R^d$ and equations of the form 
\[
\Box u + u = |u|^{p-1}u, \quad 1+\frac{4}{d}<p<1+\frac{4}{d-2},
\]
namely those powers of $L^2$-supercritical and $H^1$-subcritical. 
 The main difference between~\cite{NLKGrad} and this one lies with the further analysis of the trapped solutions in
Theorem~\ref{thm:main}. Recall that we proved in the radial case (where all solutions are normalized) that  any solution in $\HH^{\e}$ which is trapped by $Q$ as $t\to\I$ 
scatters to $Q$ as $t\to\I$,  see~\eqref{scat def}.  

This is the ``conditional asymptotic stability behavior'' observed in \cite{S,KrS1,Bec2} for the nonlinear Schr\"odinger equation (NLS), and near the equilibrium $Q$  these states
form a smooth codimension one manifold which is of the center-stable type, see~\cite{BJ}. 
Note that there are no modulation parameters in the radial case, such as translation or scaling, which simplifies the analysis considerably. 
In contrast, in the nonradial case,  $Q$ may be translated as well as  Lorentz transformed,
leading to a $6$-dimensional manifold of solitons (which consists of two disconnected components corresponding to $\pm Q$), parametrized by the relativistic momentum $p\in\R^3$ and the center of mass $q\in\R^3$. The traveling waves also undergo the {\em Lorentz contraction} (i.e., they are flattened in one direction), and are  explicitly given at each $(p,q)\in\R^6$ in the form 
\EQ{
\label{eq:Qpq}
 Q(p,q)=Q(x-q+p(\LR{p}-1)|p|^{-2}p\cdot(x-q)), \quad p,q\in \R^{3},}
where $\LR{p}:=\sqrt{1+|p|^2}$. The ground-state traveling waves are those solutions 
\EQ{
 u(t)=\pm Q(p,q(t)), \pq p\in\R^3,\pq \dot q(t)=\frac{p}{\LR{p}},} 
with fixed momentum $p$ and velocity $p/\LR{p}$. In our context, $p$ will be small for the normalized solutions, whereas $q$ can be arbitrary. 

The above scenario is of course the analogue of the center-stable manifolds of~\cite{S,Bec2} for NLS, where the family of solitons has 8 dimensions, parametrized by the non-relativistic momentum in $\R^3$, the center of mass in $\R^3$, the gauge in $S^1$ and the charge in $\R_{+}$. However, the NLS case of the
latter two references is quite different from the wave case treated here, since the symmetry group of NLS (generated by Galilei transforms, translation, 
modulation, and scaling) does not mix space and time in the way that  Lorentz transforms do. In addition, the modulation equations 
 in the NLS case are first order ODEs, whereas in the wave case they at least appear on first sight to be of the second order. A considerable amount of effort is therefore expended here on
setting up a suitable framework for the modulational and scattering analysis, see Sections~\ref{sec:complex} and~\ref{sec:CS}. 
In particular, we will set things up in such  a way that the modulation equations are of the first order. 

We can summarize the findings of this paper concerning the trapped solutions in Theorem~\ref{thm:main} as follows: 

\begin{thm}\label{thm:main2}
All normalized solutions in $\HH^{\e}$ which are trapped by $Q$ as $t\to\I$ lie on a smooth, codimension-one manifold $\calM_{+}(Q)$ in~$\HH$,  
and those trapped by $-Q$ as $t\to\I$ lie on $\calM_{+}(-Q)=-\calM_{+}(Q)$.  Analogously, those trapped by $\pm Q$ as $t\to-\I$ lie on $\calM_{-}(\pm Q)$. 
The manifold $\calM_{+}(Q)$ is invariant under the flow of~\eqref{eq:NLKG3} in forward time, and all solutions starting on $\calM_{+}(Q)$ scatter to 
the six-dimensional manifold generated by $Q$ as $t\to\I$ in the following sense:  there exist $p_\I\in\R^3$ and $C^{1}$ paths $q(t)\in\R^3$ with the property that 
\EQ{ \label{u Q scat}
 \dot q(t)\to p_\I/\LR{p_\I}, \pq u(t) - Q(p_\I,q(t)) \text{ scatters to } 0, \pq(t\to\I).} 

The center-stable manifolds $\calM_{+}(Q)$ and $\calM_{-}(Q)$ intersect transversely in a codimension two manifold, the center manifold, which characterizes case~(9)
of Theorem~\ref{thm:main}.  

Finally, all normalized solutions $\vec u(t)$ with energy $E(\vec u)=E(Q,0)$, and which are trapped by $\pm Q$ in positive times, are characterized as follows: either $u$ is constant
and equal to some translate of $\pm Q$, or for some $t_{0}\in\R$, $x_{0}\in \R^{3}$, 
\EQ{\label{eq:DuyM}
u(t,x)= \pm W_{+}(t+t_{0}, x+x_{0}) \text{\ \ or\ \ } u(t,x)= \pm W_{-}(t+t_{0}, x+x_{0}) 
}
where $W_{\pm}$ are solutions of~\eqref{eq:NLKG3} which approach $Q$ exponentially fast in $\HH$, and either blowup or scatter to $0$ as $t\to -\I$. An analogous statement holds for the negative time direction. 
\end{thm}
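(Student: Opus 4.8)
The plan is to reduce the non-radial problem to the radial analysis of \cite{NLKGrad} by first peeling off the six soliton symmetries via a modulation ansatz, and then applying the center-stable manifold machinery from that paper to the remaining infinite-dimensional dynamics. First I would write a trapped normalized solution near $Q$ in the form $u(t) = Q(p(t),q(t)) + v(t)$, choosing the modulation parameters $(p(t),q(t))\in\R^6$ by imposing orthogonality of $\vec v(t)$ to the six-dimensional tangent space of the soliton manifold at $(p(t),q(t))$ (plus the one extra condition coming from the unstable/stable mode of the linearized operator $\LL_+$, which is what cuts the codimension down to one). Because of the way the problem is set up in Sections~\ref{sec:complex} and~\ref{sec:CS}, these modulation equations are first-order ODEs for $(p,q)$ driven quadratically by $\vec v$; the key point, as in the radial case, is that the relevant unstable eigenvalue of the linearized flow dominates, so that staying in an $O(\e)$ neighborhood of the soliton manifold forces the unstable component of $\vec v$ to be slaved to the stable/center data. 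This is exactly the graph construction: the set of trapped data is locally the graph of a Lipschitz (in fact $C^1$, by the usual fixed-point-with-parameters argument) function over the codimension-one center-stable subspace, which globalizes to $\calM_+(Q)$ by the invariance of the trapped condition under the (forward) flow. The scattering statement~\eqref{u Q scat} then follows by running the dispersive/Strichartz estimates on $\vec v$ once the soliton part has been modulated out: $\vec v(t)$ decays and scatters to a free solution, $p(t)\to p_\I$ since $\dot p$ is integrable, and $\dot q(t)\to p_\I/\LR{p_\I}$ from the modulation equation for $q$; absorbing the limiting soliton profile $Q(p_\I,q(t))$ and discarding the lower-order modulation error gives the claim.

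For the transversality of $\calM_+(Q)$ and $\calM_-(Q)$: each is a graph over a codimension-one subspace, and the two subspaces differ precisely in the sign of the unstable (resp.\ stable) eigendirection of $\LL_+$, so their intersection is transverse and has codimension two; this codimension-two graph is the center manifold, and a solution lies on it iff it is trapped in \emph{both} time directions, which is case~(9). The invariance and smoothness of the center manifold follow from the same fixed-point scheme run with the spectral projection onto the center (hyperbolic-free) subspace.

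For the final rigidity statement at the threshold energy $E(\vec u)=E(Q,0)$: here the virial/variational identities force the solution either to be stationary (hence a translate of $\pm Q$ by the uniqueness of the ground state) or to converge to the soliton manifold exponentially. In the latter case the modulation analysis above, combined with the fact that on the energy surface $E=E(Q,0)$ there is no room for a center component of $\vec v$ (the quadratic form associated with $\LL_+$ is then sign-definite on the relevant subspace once the unstable/stable modes are accounted for), pins the dynamics onto the one-dimensional unstable, resp.\ stable, manifold of $Q$: there is, up to the time translation $t_0$ and space translation $x_0$ and the sign, exactly one forward-trapped trajectory $W_+$ and one $W_-$ approaching $Q$ exponentially, each of which must blow up or scatter to $0$ in the opposite time direction (it cannot be trapped there, or it would be on the center manifold, contradicting $E=E(Q,0)$ unless it is the stationary solution). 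This is the same ``no eternal near-soliton'' dichotomy as in~\cite{NLKGrad}, and the exponential approach is a consequence of the spectral gap of $\LL_+$ together with the first-order modulation equations.

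The main obstacle I expect is the modulation analysis in the presence of the translation parameter $q$, which can be arbitrarily large (only $p$ is small): one must show that the modulation equations close and that the error terms genuinely behave like those of the radial problem after the Lorentz normalization, in particular that the Lorentz contraction in~\eqref{eq:Qpq} does not spoil the dispersive estimates for $\vec v$. Controlling this small translation vector uniformly, and arranging (via the choices in Sections~\ref{sec:complex} and~\ref{sec:CS}) that the modulation system stays first order rather than second order, is precisely the new technical content beyond~\cite{NLKGrad}; once that is in place, the center-stable manifold construction and the scattering argument are structurally identical to the radial case.
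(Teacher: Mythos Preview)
Your outline matches the paper's strategy at the level of ``modulate out six parameters, build the manifold as a graph via a fixed point, then run dispersive estimates on the remainder.'' But there is a genuine gap at the step you treat as routine: ``running the dispersive/Strichartz estimates on $\vec v$ once the soliton part has been modulated out.'' After modulation, the equation for the dispersive component $z=P_c v$ takes the form
\[
 z_t = i\D\LL\, z + P_c\bigl[a(t)\cdot\nabla z + g\bigr], \qquad a(t)=\tau(p(t))+\dot\gamma(t),
\]
and the transport coefficient $a(t)$ is only small in $L^\infty_t$, not integrable: indeed $a(t)\to\tau(p_\infty)\neq 0$. Hence $a\cdot\nabla z$ cannot be absorbed as a source term in a standard Strichartz/Duhamel iteration, nor removed by a fixed Lorentz transform (since $p$ is time-dependent). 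This is precisely the new technical content beyond \cite{NLKGrad}: the paper devotes an entire section to proving a Strichartz estimate for $i\D\LL+P_c\,a(t)\cdot\nabla$ with $\|a\|_{L^\infty_t}\ll 1$, via a Beceanu-type argument (Kato smoothing for $\LH$, inversion of $1-T$ where $T$ encodes the transport, and pointwise decay of the free propagator with a moving center). Without this, neither the a~priori bounds nor the contraction close. A second, related gap is in the difference estimate needed for uniqueness/smoothness of the graph: differencing $a\cdot\nabla z$ loses a derivative, and the paper handles this by passing to an exponentially weighted $H^{-1}_x$ topology together with the change of variables $\zeta(t,x)=z(t,x-b(t))$, $b(t)=\int_0^t a$, which converts the transport into a commutator with the potential.

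Your threshold argument is also not the paper's mechanism. The claim ``on the energy surface $E=E(Q,0)$ there is no room for a center component'' is not how the rigidity is obtained; in fact the center (continuous-spectrum) component $z$ is present throughout the evolution. What the paper uses is the asymptotic energy--momentum identity
\[
 E(u)=J(Q)\LR{p_\infty}+\tfrac12\|v_\infty\|_2^2,\qquad P(u)=J(Q)p_\infty+P(v_\infty),
\]
which at the threshold $E(u)^2-|P(u)|^2=J(Q)^2$ forces the scattering data $v_\infty=0$, hence $z(t)\to 0$ strongly. This allows one to run the Strichartz estimate for $z$ \emph{backward from $t=\infty$} with an exponential weight $e^{\mu t}$, $\mu\in(k/2,k)$, yielding the exponential decay of $(\pi_t,v)$; a difference estimate from $t=\infty$ then shows the solution is uniquely determined by $\lambda_-(0)\in\R$, giving exactly the three orbits $Q$, $W_+$, $W_-$ up to symmetry. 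The virial/variational identities play no role at this stage.
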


For more detailed asymptotic behavior of in Theorem~\ref{thm:main2}, see Section~\ref{sec:CS} below. 
The solutions $W_{\pm}$ are the analogue of the threshold solutions found by Duyckaerts, Merle~\cite{DM1,DM2} for the nonlinear wave and Schr\"odinger equations with the $H^1$ critical power.  As in the radial case~\cite{NLKGrad}, our proof of Theorem~\ref{thm:main2}
relies on the gap property of the linearized operator 
\EQ{ \label{def L+}
 L_{+}:=-\Delta+1-3Q^{2},}
i.e., $\spec(L_+)\cap(0,1]=\emptyset$ without threshold resonance at $1$.  
This property was verified in~\cite{DS} by a numerically assisted  Birman-Schwinger method, whereas an analytical approach to the study of such spectral problems is developed in~\cite{CHS}.  

The definition of center-stable manifolds given by Bates and Jones~\cite{BJ} does not refer to the asymptotic behavior of the solutions originating
on them. Rather, these manifolds are defined by their local in time invariance, their tangent space at the equilibrium, and the fact that they are transverse to the unstable manifold
(and they do not need to be unique with these properties), see also Hirsch, Pugh, Shub~\cite{HPS}. 
In particular, the equilibrium must be a true stationary solution which is not the case in this paper due to the symmetries of~\eqref{eq:NLKG3}. This is why~\cite{BJ}
only applies to the NLKG equation~\eqref{eq:NLKG3} in the radial setting. Moreover, since the method of proof in~\cite{BJ} relies entirely on energy estimates and does
not use any dispersive properties of the nonlinear flow, no asymptotic stability properties of the solutions starting on the center-stable manifold are obtained. However, 
using the action $J$ as a Lyapunov functional, Bates and Jones do show that their center manifold is orbitally stable in both time directions (in particular, solutions
starting on the center manifold exist for all  times). 

In contrast, the point of view of \cite{S,KrS1,Bec2} is that of asymptotic stability theory of solitons. Since the equations under considerations in these papers, 
such as the cubic NLS equation in three dimensions, are orbitally unstable and in fact small perturbations of solitons can lead to finite time blowup, any asymptotic
stability properties one hopes to recover need to be {\em conditional}.  Since the ground state soliton in these settings is known to have exactly one exponentially unstable mode,
the condition turns out to be a codimension one manifold. One advantage of this approach is that it is not restricted to the radial setting and allows for moving parameters.
The latter always correspond to symmetries of the equation and are controlled by suitable orthogonality conditions relating to the discrete spectrum of the linearized operators. 

Working in an invariant topology such as the energy class, Beceanu~\cite{Bec2}  was able to show that the  center-stable manifold obtained by the asymptotic
stability approach\footnote{Which is called Lyapunov-Perron method, see~\cite{V} and~\cite{Ball}, in contrast to the Hadamard method used by Bates, Jones~\cite{BJ}.} 
has the properties required by~\cite{BJ}. Strictly speaking, due to the underlying symmetries present in the Schr\"odinger equation (even in the radial setting one has the modulation
and dilation symmetries, see~\cite{NakS2}) or the NLKG equation~\eqref{eq:NLKG3}, a direct comparison between the manifolds of~\cite{BJ} and those in~\cite{Bec2}, \cite{NakS2}, or
of Theorem~\ref{thm:main2} above,  is inadmissible. However, passing to the quotients by the symmetries does allow for a comparison in some sense. 
For example, $W_{+}, W_{-}$ and $Q$ form the stable manifold associated with $Q$.

\section{The Complex Formalism}\label{sec:complex}

We consider the nonlinear Klein-Gordon equation 
\EQ{\label{eq:NLKG}
 \ddot \uu - \De \uu + \uu = \uu^3, \pq \uu(t,x):\R^{1+3}\to\R,}
in the energy space $(\uu(t),\dot \uu(t))\in \HH=H^1(\R^3)\times L^2(\R^3)$. 
It is convenient to formulate the equation in terms of the complexified variable 
\EQ{ \label{complexify}
 u := \D \uu - i\dot \uu, \pq \D:=\sqrt{1-\De}.}
Using the notation
\EQ{ \label{vec component}
 u_1:=\D^{-1}\Re u, \pq u_2:=\Im u}
  for any complex function $u$, 
the conserved energy becomes
\EQ{
 E(u) = \int_{\R^3} \Bigl[\frac{|u_2|^2+|\na u_1|^2+|u_1|^2}{2}-\frac{|u_1|^4}{4}\Bigr] \, dx = \|u\|_{L^2}^2/2 - \|u_1\|_{L^4}^4/4,}
and the energy (or phase) space for $u(t)$ is the $\R$-Hilbert space $\HL$ defined by 
\EQ{
 \HL := L^2(\R^3;\C), \pq \LR{f|g}:=\Re\int_{\R^3}f(x)\bar{g(x)} \, dx\ (f,g\in\HL).}
We can rewrite \eqref{eq:NLKG} as the system consisting  of \eqref{vec component} and 
\EQ{ \label{eq in vec u}
 u_t = i\D u - i u_1^3.}
In the following, the relation \eqref{vec component} is always assumed for any vector expression, 
but not necessarily \eqref{complexify}. 
The ground state in this formulation is denoted by 
\EQ{
 \VQ:=\D Q.}
We decompose any solution $u$ of \eqref{eq in vec u} in the form
\EQ{
 u = (\VQ+ v)(x-c(t)).}
Then the equation for $v$ is 
\EQ{\label{eq:vPDE} 
 v_t = i\D\LL v + \p_k(\VQ+ v) \dot c^k - i N(v_1),}
where $\LL$ and $N$ are defined by 
\EQ{\label{eq:LNdef}
 \pt \LL v = v - 3\D^{-1}Q^2v_1, 
 \pq N(v_1) = 3Qv_1^2 + (v_1)^3 = O(\|v\|_2^2),}
 respectively. 
 Note that $v$ does not satisfy \eqref{complexify} in general, due to the time-dependent translates $c(t)$.
 However, this will not affect the argument.

$\LL$ is a self-adjoint $\R$-linear Fredholm operator on $\HL$. 
The linearized operator $i\D\LL$ has the following generalized eigenfunctions 
\EQ{\label{eq:geneifunc}
 i\D\LL\na\VQ=0,\pq i\D\LL i\na Q=-\na\VQ, \pq i\D\LL g_\pm=\pm kg_\pm,}
where $g_\pm$ is given in terms of the ground state $\ro$ of $L_+=\D^2-3Q^2$: 
\EQ{
 g_\pm = (2k)^{-1/2}\D\ro \mp i (k/2)^{1/2}\ro, \pq (L_+\ro=-k^2\ro, \ \|\ro\|_2=1,\ \ro>0).}
The natural symplectic form associated with this complex formalism is 
\EQ{\label{eq:sympl}
 \om(u,\tilde u) := \LR{i\D^{-1}u| \tilde u}.}
 Indeed, in keeping with the classical Hamiltonian formalism we need to be able to write
 the equation~\eqref{eq in vec u} in the form
 $
 \dot u = X_{E}(u)
 $
 where $X_{E}(u)$ is the Hamiltonian vector field associated with $E$. This means that 
 \[
 dE(u)(\cdot)= \omega(\cdot, X_{E}(u))
 \]
 or in other words,
 \[
 \lan u-\D^{-1}u_{1}^{3}|\cdot \ran = \omega(\cdot ,i\D u- iu_{1}^{3})
 \]
 which yields~\eqref{eq:sympl}. 
We have, for $\al,\be=1,2,3$, 
\EQ{
 \pt\om(\p_\al\VQ,i\p_\be Q)=\de_{\al,\be}\|\p_1Q\|_2^2=\de_{\al,\be}J(Q),
 \pq\om(g_\pm,g_\mp)=\pm 1, 
 \pr\om(\na\VQ,g_\pm)=\om(i\na Q,g_\pm)=0.}
We now perform the following symplectic decomposition of $v$:
\EQ{ \label{decop v}
 \pt v = \la_+g_+ + \la_-g_- + \ga, 
 \pq \la_\pm := \om(v,g_\mp)/\om(g_\pm,g_\mp)=\om(v,\pm g_\mp),}
 which implies that
 \[
 \om(\ga,g_{\pm})=0 \iff \lan \ga_{1}| \rho \ran = \lan \ga_{2} | \rho\ran=0
 \]
We can then expand the energy 
\EQ{
 2(E(u)-J(Q))\pt= \LR{L_+v_1|v_1}+\|v_2\|_2^2-2C(v)=\LR{\LL v| v}-2C( v)
 \pr=-2k\la_+\la_-+\LR{\LL\ga|\ga}-2C( v)
 \pr=2k(\la_2^2-\la_1^2)+\LR{\LL\ga|\ga}-2C( v),}
where $\la_j$ and $C$ are defined by 
\EQ{
 \pt \la_1=\frac{\la_++\la_-}{2}, \pq \la_2=\frac{\la_+-\la_-}{2}, 
 \pq C( v)=\LR{Q|v_1^3}+\frac{\|v_1\|_4^4}{4}=O(\|v_1\|_{H^1}^3).}

\section{Parameter choice}

We can reduce the number of coordinates by using the Lorentz invariance.  To be more specific,
let $w$ be a global strong energy solution of~\eqref{eq:NLKG} and consider 
the following example of a Lorentz transform of $w$ given by 
\EQ{ \label{Lorentz}
 w(t,x) \mapsto w_\nu(t,x):=w(t\cosh\nu+x_1\sinh\nu,x_1\cosh\nu+t\sinh\nu,x_2,x_3)}
for $\nu\in\R$. Then one checks that  $w_\nu$ is again a strong energy solution of~\eqref{eq:NLKG} and 
\EQ{\label{eq:EPtrans}
 \pt E( w_\nu) = E( w)\cosh\nu+P_1( w)\sinh\nu, 
 \pr P_1( w_\nu)=P_1( w)\cosh\nu+E( w)\sinh\nu, \pq P_\al( w_\nu)=P_\al( w),\ (\al=2,3),}
where $P=(P_1,P_2,P_3)$ denotes the total momentum 
\EQ{
 P( w) = \LR{w_t|\na w} = \frac12\om(u,\nabla u),}
 cf.~\eqref{complexify}. 
Note that the Lorentz transform preserves $E^2-|P|^2$. 
Every solution with finite energy $|E|>|P|$ is transformed, by a unique element of the Lorentz group, to another solution with zero momentum:
\EQ{ \label{momentum cond}
 0=P( u) = \om( u,\na u)/2=\om( v,\na\VQ)+\om( v,\na v)/2,}
which minimizes the energy among the family of solutions generated by the Lorentz transformation group. 
By the discussion in Section~\ref{sec:intro} the cases $|E|\le|P|$ are covered by~\cite{PS} and can be ignored. 
Since the dynamical properties such as scattering and blowup are not changed by the Lorentz transforms, 
we may restrict our dynamical analysis to the invariant subset
\EQ{
 \HL_0 := \{ u\in \HL \mid \om(u,\na u)=0\}.}
 Note that this is well-defined since $\om$ gains a derivative. 
However, we will keep the generality of $\HL$ in the static analysis, see Lemma~\ref{K lower bd} below. 
We define the decomposition 
\EQ{
  u = \sg(\VQ+ v)(x-c), \pq \sg=\pm 1,\ c\in\R^3}
by the orthogonal projection in $H^{-1}(\R^3)$, or the minimization 
\EQ{ \label{L2 mini}
 \|u_1-\sg Q(x-c)\|_{L^2}=\min_{b\in\R^3} \|u_1\mp Q(x-b)\|_{L^2},}
which is attained for any $ u\in\HL$, and uniquely so if the right-hand side is small enough. 

The minimization implies the orthogonality conditions 
\EQ{ \label{orth}
 0=\om( v,i\p_j Q) = \LR{v_1|\p_jQ} \pq (j=1,\dots,3).} 
Differentiating \eqref{orth} with respect to $t$, we obtain the parameter evolution in $\HL_0$ 
\EQ{ \label{eq c}
 0\pt=\p_t\om( v,i\p_\al Q) \pq(\al=1,2,3)
 \pr=\om(i\D\LL  v + \dot c\cdot (\VQ+ v) - i N(v_1),i\p_\al Q)
 \pr=\|\p_1Q\|_{L^2}^2\dot c-\LR{\na\p_\al Q|v_1}\cdot\dot c  - \om( v,\na v)/2,}
where we used \eqref{momentum cond} to rewrite the first term of the second line, while the nonlinear 
term $N$ does not contribute because of its reality. Thus,  we obtain 
\EQ{ \label{bd on ct}
 |\dot c| \lec |\om( v,\na v)| \lec \| v(t)\|_2^2,}
as long as $\|v_1\|_{L^2}\ll 1$ and the solution $ u\in\HL_0$. \eqref{orth} and the symplectic 
decomposition \eqref{decop v} imply the orthogonality relations 
\EQ{
 \pt 0=\LR{\ga_1|\p_jQ}=\LR{\ga_1|\ro}=\LR{\ga_2|\ro}.}
Since $\{\ro,\na Q\}$ covers the non-positive eigenfunctions of $L_+$ (see \cite[Appendices A and E]{Wein}), 
\EQ{
 \LR{\LL\ga|\ga}=\LR{L_+\ga_1|\ga_1}+\|\ga_2\|_2^2\simeq\|\ga_1\|_{H^1}^2+\|\ga_2\|_2^2\simeq\|\ga\|_2^2.}
The linearized energy norm $\| v\|_E\ge 0$ is defined in the subspace \eqref{orth} by 
\EQ{
 \| v\|_E^2 = k(\la_1^2+\la_2^2)+\LR{\LL\ga|\ga}/2 = \frac{k}{2}(\la_+^2+\la_-^2)+\LR{\LL\ga|\ga}/2 \simeq \| v\|_2^2.}
In fact, we have 
\EQ{
 \| v\|_E \simeq \inf_{\pm, b}\| u\mp \VQ(x-b)\|_{L^2},}
since 
\EQ{\label{eq:minbc}
 \|v_1\|_{H^1}- \|u_1\mp Q(x-b)\|_{H^1} 
 \pt\le \|\pm Q(x-b)-Q(x-c)\|_{H^1}
 \pr\simeq\|\pm Q(x-b)-Q(x-c)\|_{L^2} 
 \pr\le \|u_1\mp Q(x-b)\|_{L^2}+\|u_1-Q(x-c)\|_{L^2}
 \pr\le 2\|u_1\mp Q(x-b)\|_{L^2}}
and so, $\|v_1\|_{H^1}\lec \|u_1\mp Q(x-b)\|_{H^1}$. 
Moreover, 
\EQ{
 \| v\|_E^2 = E( u)-J(Q)+2k\la_1^2+C( v),\pq C( v)=o(\| v\|_{H^1}^2).}
We thus obtain the following lemma, which introduces the nonlinear distance function
from~\cite{NLKGrad} in the non-radial setting. 
 
\begin{lem}
There exists $\de_E>0$ and $d_Q(u):\HL\to[0,\I)$ continuous such that 
\EQ{ \label{energy dist}
 \pt d_Q( u)\simeq \inf_{\pm, b} \| u\mp\VQ(b)\|_{L^2}, 
 \pr d_Q( u) \le \de_E \implies d_Q( u)=E( u)-J(Q)+2k\la_1^2.}
Moreover, \eqref{L2 mini} has a unique solution $(\sg,c)$ for $d_Q( u)\le\de_E$, and decomposing 
\EQ{
 \pt  u=\sg(\VQ+ v)(x-c), \pq  v = \la_+g_+ + \la_-g_-+\ga,
 \pq \la_\pm=\om( v,\pm g_\mp),}
we have 
\EQ{
 d_Q( u)^2 \simeq \| v\|_E^2 = \frac{k}{2}(\la_+^2+\la_-^2)+\frac 12 \LR{\LL\ga|\ga}.} 
In addition, if 
\EQ{ \label{outer region}
 2(E( u)-J(Q))<d_Q( u)^2<\de_E^2}
then  $d_Q( u)\simeq|\la_1|=|\la_++\la_-|/2$.
\end{lem}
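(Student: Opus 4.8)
The plan is to assemble the lemma out of the algebraic identities already derived above, the coercivity $\langle \LL\ga|\ga\rangle\simeq\|\ga\|_2^2$ on the orthogonality subspace, and a standard modulation/implicit-function argument to produce the decomposition together with the nonlinear correction that upgrades the quadratic energy expansion into an \emph{exact} distance function. First I would fix the scale: choose $\de_E$ small enough that the minimization problem \eqref{L2 mini} has a unique minimizer $(\sg,c)$ whenever the right-hand side is $\lesssim\de_E$; this is the usual argument using that $b\mapsto\|u_1\mp Q(\cdot-b)\|_{L^2}^2$ is, near its minimum, a small perturbation of a strictly convex function (its Hessian at $b=0$ is governed by $\|\na Q\|_{L^2}^2>0$), so the minimizer depends smoothly on $u$ for $u$ in the region where the infimum is small. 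The orthogonality conditions \eqref{orth} are exactly the Euler–Lagrange equations for this minimization, and the symplectic decomposition \eqref{decop v} of $v$ then gives $\la_\pm=\om(v,\pm g_\mp)$ and $\ga$ with $\langle\ga_1|\rho\rangle=\langle\ga_2|\rho\rangle=\langle\ga_1|\p_jQ\rangle=0$; since $\{\rho,\na Q\}$ exhausts the non-positive spectral subspace of $L_+$ (by \cite[Appendices A and E]{Wein}), we get $\langle\LL\ga|\ga\rangle\simeq\|\ga\|_2^2$ and hence $\|v\|_E^2=\tfrac k2(\la_+^2+\la_-^2)+\tfrac12\langle\LL\ga|\ga\rangle\simeq\|v\|_2^2$.

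Next I would establish the two-sided comparison $\|v\|_E\simeq\inf_{\pm,b}\|u\mp\VQ(\cdot-b)\|_{L^2}$. One direction is immediate because $\|v\|_E\simeq\|v\|_2=\|u\mp\VQ(\cdot-c)\|_{L^2}$ with the minimizing $c$, so this is $\ge\inf_{\pm,b}$. For the reverse, I would use the chain \eqref{eq:minbc}: for any competitor $b$, $\|v_1\|_{H^1}\lesssim\|u_1\mp Q(\cdot-b)\|_{H^1}$ (and similarly for $v_2$ in $L^2$), giving $\|v\|_2\lesssim\inf_{\pm,b}\|u\mp\VQ(\cdot-b)\|_{L^2}$; the key input here is that the distance between two translates of $Q$ is comparable in $H^1$ and $L^2$, which follows from smoothness and exponential decay of $Q$. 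This fixes $d_Q(u)$ up to constants on the region $d_Q(u)\le\de_E$; outside that region I would simply \emph{define} $d_Q(u):=\inf_{\pm,b}\|u\mp\VQ(\cdot-b)\|_{L^2}$ (or a smooth truncation patching the two formulas), so that continuity on all of $\HL$ and the equivalence $d_Q(u)\simeq\inf_{\pm,b}\|u\mp\VQ(\cdot-b)\|_{L^2}$ hold globally. Continuity at the gluing radius requires a little care but is routine once the comparison constants are known.

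For the defining identity $d_Q(u)^2=E(u)-J(Q)+2k\la_1^2$ when $d_Q(u)\le\de_E$, the starting point is the expansion already recorded, $\|v\|_E^2=E(u)-J(Q)+2k\la_1^2+C(v)$ with $C(v)=o(\|v\|_{H^1}^2)=o(\|v\|_E^2)$. So I want to choose the implicit constant in $d_Q(u)\simeq\|v\|_E$ — really, to \emph{define} $d_Q(u)$ on the small region — so that $d_Q(u)^2$ equals $\|v\|_E^2+C(v)$ exactly, i.e. $d_Q(u)^2:=E(u)-J(Q)+2k\la_1^2$. One must then check this quantity is nonnegative and comparable to $\|v\|_E^2$: since $C(v)=o(\|v\|_E^2)$, for $\de_E$ small we have $E(u)-J(Q)+2k\la_1^2=\|v\|_E^2(1+o(1))\simeq\|v\|_E^2\ge0$, and taking the square root gives a continuous function on $d_Q(u)\le\de_E$ matching (up to constants) the outer definition — adjusting $\de_E$ once more so the two regions overlap consistently. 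The continuity of $\la_1$, and hence of this formula, in $u$ follows from the smooth dependence of $(\sg,c)$ and the boundedness of the symplectic pairings.

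Finally, the last assertion: in the regime \eqref{outer region}, $2(E(u)-J(Q))<d_Q(u)^2$, combine with $d_Q(u)^2=E(u)-J(Q)+2k\la_1^2$ to get $2k\la_1^2>E(u)-J(Q)=d_Q(u)^2-2k\la_1^2$, hence $E(u)-J(Q)<2k\la_1^2$ and also $E(u)-J(Q)>d_Q(u)^2-2\cdot(\text{small})$; more directly, from $d_Q(u)^2=(E(u)-J(Q))+2k\la_1^2$ and $E(u)-J(Q)<\tfrac12 d_Q(u)^2$ we obtain $2k\la_1^2>\tfrac12 d_Q(u)^2$, i.e. $|\la_1|\gtrsim d_Q(u)$; the reverse bound $|\la_1|\lesssim|\la_+|+|\la_-|\lesssim\|v\|_E\simeq d_Q(u)$ is trivial. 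Thus $d_Q(u)\simeq|\la_1|=|\la_++\la_-|/2$ in that region.

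I expect the main obstacle to be purely bookkeeping: arranging the definition of $d_Q$ so that (i) near $Q$ it is \emph{exactly} $(E-J(Q)+2k\la_1^2)^{1/2}$, (ii) far from $Q$ it is the bare $L^2$-distance to $\pm\VQ$-translates, and (iii) it is continuous and uniformly comparable to that $L^2$-distance across the transition — i.e. choosing $\de_E$ and the interpolation consistently, and verifying the smooth dependence of the modulation parameters $(\sg,c,\la_\pm,\ga)$ on $u$ throughout the small region. All the genuine analytic content (uniqueness of the modulation parameters, coercivity of $\LL$ on the orthogonal complement, the $H^1$–$L^2$ comparability of translate differences, the cubic smallness of $C(v)$) is either already in the excerpt or standard.
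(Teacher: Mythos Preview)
Your proposal is correct and follows the paper's approach: the lemma is stated in the paper with the phrase ``We thus obtain the following lemma,'' meaning the proof is precisely the preceding computations (the modulation decomposition via \eqref{L2 mini}--\eqref{orth}, the coercivity $\LR{\LL\ga|\ga}\simeq\|\ga\|_2^2$, the comparison \eqref{eq:minbc}, and the expansion $\|v\|_E^2=E(u)-J(Q)+2k\la_1^2+C(v)$), which you have correctly assembled, together with the gluing of the inner formula to the bare $L^2$ distance. Your derivation of the final claim $d_Q(u)\simeq|\la_1|$ in the region \eqref{outer region} from the defining identity and the strict inequality is exactly right.
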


\section{Hyperbolic and variational estimates}

Next we investigate the hyperbolic structure in $\HL_0$. The evolution of $\la$ is obtained
by differentiating~\eqref{decop v}. Via~\eqref{eq:vPDE} this yields 
\EQ{
 \dot\la_\pm\pt=\om(i\D\LL  v + \dot c\cdot \na(\VQ+ v) - i N(v_1),\pm g_\mp)
 \pr=\pm [k\la_\pm + (2k)^{-1/2}\LR{N(v_1)|\ro}-\om( v,\na g_\mp)\cdot\dot c],}and so
\EQ{
 \pt\dot\la_1=k\la_2-\sqrt{\frac{k}{2}}\LR{v_1|\na\ro}\cdot\dot c,
 \pr\dot\la_2=k\la_1+\frac{1}{\sqrt{2k}}\LR{v_2|\na\ro}\cdot\dot c+\frac{1}{\sqrt{2k}}\LR{N(v_1)|\ro}.}
 Both of these equations exhibit the hyperbolic nature of the ODE for $(\la_{+},\la_{-})$ or $(\la_{1},\la_{2})$. 
Hence
\EQ{
 \p_t d_Q( u)=4k\la_1\dot\la_1=4k^2\la_1\la_2+O(\la_1\| v\|_E^3)}
  in the region $d_Q( u)<\de_E$. 
Moreover, in \eqref{outer region}, one has  $\la_1 \simeq -\sg d_Q( u)$, with $\sg=\pm 1$. If in addition $\p_t d_Q( u)\ge 0$, then 
\EQ{
 \sg\la_1 \simeq \sg\la_+ \ge \sg\la_- - O(\la_1^3).}
Let $ v_g:=\la_+g_++\la_-g_-$. Then the projected energy 
\EQ{
 E_g( u) := J(Q)-k\la_+\la_- - C( v_g)}
solves the equation 
\EQ{
 \p_t E_g( u) \pt= -k\dot\la_+\la_--k\la_+\dot\la_- - \LR{N(v_{g1})|\p_t v_{g1}}
 \pr= \LR{N(v_1)-N(v_{g1})|\p_t v_{g1}} -\om( v,\na\p_t v_g)\cdot\dot c
 \pn= O(\|\ga\|_2\| v\|_2^2+\| v\|_2^4),}
in the region \eqref{outer region}, and also 
\EQ{
 E( u)-E_g( u)=\LR{\LL\ga|\ga}/2-C( v)+C( v_g)\simeq\|\ga\|_2^2+O(\|\ga\|_2\| v\|_2^2).}
Therefore, in the region \eqref{outer region} we have  
\EQ{ \label{ga bd}
 \|\ga\|_{L^\I_tL^2_x(0,T)}^2 \lec \|\ga(0)\|_2^{2} + \|\ga\|_{L^\I_tL^2_x(0,T)}\|\la\|_{(L^\I\cap L^2)_t(0,T)}^2 + \|\la\|_{L^4_t(0,T)}^4.}

Next we compute the leading term in the kinetic functionals, defined by 
\EQ{
 \pt K_0(\fy) := \int_{\R^3} \Bigl[ |\na \fy|^2+|\fy|^2-|\fy|^4 \Bigr] dx, 
 \pq K_2(\fy) := \int_{\R^3} \Bigl[ |\na \fy|^2-\frac{3}{4}|\fy|^4 \Bigr] dx.}
Inserting $u=\VQ+v$ and using the equation of $Q$, we obtain 
\EQ{
 \pt K_0(u_1)=-2 \LR{ Q^{3}|v_1}+O(\|v_1\|_{H^1}^2), 
 \pr K_2(u_1) = -\LR{Q^{3}-2Q|v_1}+O(\|v_1\|_{H^1}^2).}
Since $v_1=\sqrt{\frac{2}{k}}\, \la_1\ro+\ga_1$,  and $L_{+}Q=-2Q^{3}$, $L_{+}\ro=-k^{2}\ro$,  we have 
\EQ{ \label{exp K}
 \pt K_0(u_1)=  - k^{2} \sqrt{\frac{2}{k}} \LR{Q |\ro}\la_1-\LR{2Q^3|\ga_1}+O(\|v_1\|_{H^1}^2)
 \pr K_2(u_1)= -  (2+k^{2}/2)\sqrt{\frac{2}{k}} \LR{Q|\ro}\la_1-\LR{2Q+Q^3|\ga_1}+O(\|v_1\|_{H^1}^2).}
We can now formulate and prove the following nonradial version of the {\em ejection lemma} from~\cite{NLKGrad}. 

\begin{lem} \label{lem:eject}
There exist constants $\de_X\in(0,\de_E)$, $C_*\ge 1$ with the following properties: 
Let $u$ be a local solution of the NLKG equation~\eqref{eq in vec u}  on $[0,T]$ which belongs to $\HL_{0}$ and 
such that 
\EQ{
 R:=d_Q( u(0))\le \de_X, \pq E( u)<J(Q)+R^2/2,}
and so that for some $t_0\in(0,T)$, 
\EQ{ \label{exiting cond}
 d_Q( u(t))\ge R \pq (0<\forall t<t_0).}
Then $u$ extends as long as $d_Q( u(t))\le\de_X$, and meanwhile,
\EQ{
 \pt d_Q( u(t)) \simeq -\sg\la_1(t) \simeq -\sg\la_+(t) \simeq e^{kt}R,
 \pr |\la_-(t)|+\|\ga(t)\|_{L^2} \lec R+(e^{kt}R)^2,
 \pr \sg K_s(u_1(t)) \gec d_Q( u(t))-C_*d_Q( u(0)),}
for $s=0,2$ and with either $\sg=1$ or $\sg=-1$. 
Moreover, $d_Q( u(t))$ is increasing for $t\gg R^2$, and $d_Q( u(t'))\ge d_Q( u(t))-O(R^5)$ for $0\le t<t'\lec R$. 
\end{lem}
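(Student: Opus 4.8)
The plan is to run the standard ejection/ODE-comparison argument adapted to the nonradial setting, where the new ingredient is the small translation parameter $c(t)$ with $|\dot c|\lesssim\|v\|_2^2$ from \eqref{bd on ct}. First I would set $R:=d_Q(u(0))$ and work on the maximal interval $[0,T_1]$ on which $d_Q(u(t))\le\de_X$; on this interval the decomposition $u=\sg(\VQ+v)(x-c)$, the coordinates $\la_\pm,\ga$, and all the identities of the previous sections are valid. The hypothesis $E(u)<J(Q)+R^2/2$ together with $d_Q(u)^2\simeq\|v\|_E^2$ and the expansion $\|v\|_E^2=E(u)-J(Q)+2k\la_1^2+C(v)$ forces $d_Q(u)^2<2(E(u)-J(Q))<R^2+\dots$ unless $\la_1$ is comparable to $d_Q(u)$; more precisely, combining $d_Q(u(t))\ge R$ for $t<t_0$ with $2(E(u)-J(Q))<R^2\le d_Q(u(t))^2$ puts us in the outer region \eqref{outer region}, so $d_Q(u)\simeq|\la_1|$ and indeed $\la_1\simeq-\sg d_Q(u)$ for a fixed sign $\sg$.

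The core is the differential inequality for $d_Q(u)$. From $\p_t d_Q(u)=4k^2\la_1\la_2+O(\la_1\|v\|_E^3)$ and the linearized ODEs $\dot\la_1=k\la_2-\sqrt{k/2}\,\LR{v_1|\na\ro}\cdot\dot c$, $\dot\la_2=k\la_1+(2k)^{-1/2}\LR{v_2|\na\ro}\cdot\dot c+(2k)^{-1/2}\LR{N(v_1)|\ro}$, I would note that the $\dot c$ contributions are $O(\|v\|_2^3)=O(d_Q^3)$ by \eqref{bd on ct}, hence of the same (negligible, quadratically small relative to the linear term) order as the nonlinear terms already present in the radial case. Thus the pair $(\la_1,\la_2)$ satisfies, up to cubic errors, the hyperbolic system $\dot\la_1=k\la_2$, $\dot\la_2=k\la_1$, so $\la_+:=\la_1+\la_2$ grows like $e^{kt}$ and $\la_-:=\la_1-\la_2$ decays like $e^{-kt}$, modulo the cubic perturbations. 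A bootstrap/continuity argument on $[0,T_1]$: assuming $-\sg\la_+(t)\le 2C e^{kt}R$ and $|\la_-(t)|+\|\ga(t)\|_2\le CR+C(e^{kt}R)^2$, one feeds these into the ODEs and into the $\ga$-estimate \eqref{ga bd} (using that $E_g(u)$ is almost conserved, $\p_t E_g(u)=O(\|\ga\|_2\|v\|_2^2+\|v\|_2^4)$, and $E(u)-E_g(u)\simeq\|\ga\|_2^2$) to recover the same bounds with smaller constants, as long as $e^{kt}R\le\de_X$. This simultaneously shows $d_Q(u(t))$ cannot exceed $\de_X$ before it is forced to by $e^{kt}R$ reaching that size, giving the claimed extension statement, and yields $d_Q(u(t))\simeq-\sg\la_1(t)\simeq-\sg\la_+(t)\simeq e^{kt}R$.

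For the sign property of the kinetic functionals I would use the expansions \eqref{exp K}: since $v_1=\sqrt{2/k}\,\la_1\ro+\ga_1$, the leading term of $K_s(u_1)$ is a negative multiple of $\LR{Q|\ro}\la_1$ — and $\LR{Q|\ro}>0$ because $Q,\ro>0$ — so $\sg K_s(u_1)\gtrsim-\sg\la_1 - O(\|\ga_1\|_2)-O(\|v_1\|_{H^1}^2)\gtrsim d_Q(u(t)) - C_*\|\ga\|_2 - C_* d_Q^2$, and on $[0,T_1]$ the bound $\|\ga(t)\|_2\lesssim R+(e^{kt}R)^2\lesssim R$ together with $d_Q(u(t))^2\lesssim\de_X d_Q(u(t))$ absorbs the error terms into $C_* d_Q(u(0))=C_* R$, giving $\sg K_s(u_1(t))\gtrsim d_Q(u(t))-C_* R$. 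The monotonicity tail statements follow by inspecting $\p_t d_Q=4k^2\la_1\la_2+O(\la_1 d_Q^3)$: once $t\gg R^2$ we have $e^{kt}R\gg R$ so $\la_2\simeq\la_1$ (both $\simeq-\sg e^{kt}R$ up to the decaying/quadratic corrections), hence $\la_1\la_2>0$ and $\p_t d_Q>0$; and for $0\le t<t'\lesssim R$ one integrates $|\p_t d_Q|\lesssim d_Q|\la_2|+d_Q^4\lesssim R(R+R^2)\lesssim R^2$... which is too crude, so instead I would note $\p_t d_Q=4k^2\la_1\la_2+O(d_Q^4)$ with $|\la_2|\lesssim R$ on $t\lesssim R$ (since $\la_2$ has not yet grown past $O(R)$), giving $|\p_t d_Q|\lesssim R\cdot R=R^2$ over a time interval of length $O(R)$, i.e. a change of $O(R^3)$; improving $R^3$ to $R^5$ requires the more careful observation that near $t=0$ the growing mode $\la_+$ starts at size $\simeq R$ and the product $\la_1\la_2$ is actually $O(R^2)$ only when integrated against the short time, but the genuine smallness comes from $\la_2(0)$ being controlled by $\dot\la_1$-type quantities and the outer-region constraint — this is where the bookkeeping is most delicate and essentially reproduces the corresponding estimate of \cite{NLKGrad}. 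The main obstacle, then, is not any single estimate but verifying that every place where the radial proof used $\dot c\equiv 0$ the new term $\dot c=O(d_Q^2)$ is genuinely lower-order (cubic in $d_Q$ in the $\la$-equations, and harmless in \eqref{ga bd} and in $\p_t E_g$), so that the radial bootstrap goes through verbatim.
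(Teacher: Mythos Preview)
Your approach is essentially the paper's: use the outer-region relation $d_Q\simeq|\la_1|$, integrate the $\la_\pm$ ODEs with the cubic errors (correctly noting that the nonradial $\dot c$-terms are $O(\|v\|^3)$ and hence of the same order as the radial nonlinear corrections), invoke \eqref{ga bd} for $\ga$, and read off $K_s$ from \eqref{exp K}. The paper does the same, phrasing the ODE step as direct Duhamel integration $|\la_\pm(t)-e^{\pm kt}\la_\pm(0)|\lesssim\int_0^t e^{k(t-s)}|\la_1(s)|^2\,ds$ rather than a bootstrap, but this is cosmetic.

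There is one genuine slip in your monotonicity argument: the implication ``$t\gg R^2 \Rightarrow e^{kt}R\gg R$'' is false (for small $R$, $t\gg R^2$ can still have $e^{kt}\approx 1$). The paper's route is different and you should adopt it: from $\p_t d_Q^2(0)\ge 0$ one extracts $-\sg\la_2(0)\ge -O(R^3)$; integrating $\dot\la_2=k\la_1+O(\|v\|^3)$ then yields $-\sg\la_2(t)\ge R(e^{kt}-1)-O(R^3)$, and for $t\gg R^2$ the linear piece $Rkt\gg R^3$ dominates, giving $-\sg\la_2(t)>0$ and hence $\la_1\la_2>0$. This same lower bound on $-\sg\la_2$ is exactly what drives the short-time estimate you were uneasy about: for $0\le t<t'\lesssim R$ it gives $\la_1\la_2\gtrsim -R^4$, so $\p_t d_Q^2\gtrsim -R^4$ and integration over an interval of length $O(R)$ yields the $O(R^5)$ bound. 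Also, your line $\|\ga(t)\|_2\lesssim R$ is too strong; the actual bound is $R+(e^{kt}R)^2$, and the $(e^{kt}R)^2\le \de_X\, d_Q(u(t))$ part is what gets absorbed into the $d_Q$ term in the $K_s$ inequality, not into $C_*R$.
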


\begin{proof}
We have shown $d_Q( u)\simeq-\sg\la_1$, as long as $R\le d_Q( u)\le\de_E$. 
\eqref{exiting cond} implies $\p_t d_Q^{2}( u)|_{t=0}\ge 0$, and so $\la_+(0)\simeq\la_1(0)$. Integrating the equation for $\la_\pm$ yields
\EQ{
 |\la_\pm-e^{\pm kt}\la_\pm(0)|\pt \lec \int_0^t e^{k(t-s)}[|\LR{N(v_1)|\ro}|+\| v\|_{L^2}|\dot c|](s)\, ds
 \pr\lec \int_0^te^{k(t-s)}|\la_1(s)|^2\, ds,}
from which by continuity in $t$ we deduce that as long as $Re^{kt}\ll 1$, 
\EQ{
 \la_1(t)\simeq\la_+(t) \simeq -\sg Re^{kt}, \pq |\la_\pm(t)-e^{\pm kt}\la_\pm(0)|\lec (Re^{kt})^2.}
The estimate on $\ga$ follows from this and \eqref{ga bd}. 

The equation for $\la_2$ together with $\p_td_Q( u)(0)\ge 0$ implies that $-\sg\la_2\gec R(e^{kt}-1)-O(R^3)$.
Hence, for $t\gg R^2$ we have $-\sg\la_2\gec R$ and $\p_td_Q( u)>0$. 
For $0<t<t'\lec R$, we have $\p_td_Q( u)\gec-R^4$ and so $d_Q( u(t'))\ge d_Q( u(t))-O(R^5)$. 
The estimate on $K_s$ follows from the dominance of $\la_1$, together with \eqref{exp K}. 
\end{proof}

Next, we formulate the important variational lower bound from~\cite{NLKGrad}. Note that in this
case we work with $\HL$ and not~$\HL_{0}$. 

\begin{lem}[Variational lower bound] \label{K lower bd}
For any $\de>0$, there exist $\e_0(\de), \ka_0, \ka_1(\de)>0$ such that for any $u\in \HL$ satisfying 
\EQ{ \label{energy region}
 E( u)< J(Q)+\e_0(\de)^2, \pq \inf_{\pm,b}\| u\mp  \VQ(\cdot+b)\|_{L^2} \ge \de,}
one has either
\EQ{ \label{-K bd}
 K_0(u_1) \le -\ka_1(\de) \pq and \pq K_2(u_1) \le -\ka_1(\de),}
or 
\EQ{ \label{+K bd}
 K_0(u_1) \ge \min(\ka_1(\de), \ka_0\|u_1\|_{H^1}^2) \pq and \pq K_2(u_1) \ge \min(\ka_1(\de),\ka_0\|\na u_1\|_{L^2}^2).}
\end{lem}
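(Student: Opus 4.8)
The plan is to reduce this to the sign-definite trichotomy for the functional $K$ that underlies the Payne--Sattinger theory and its quantitative refinements. First I would fix $\delta>0$ and argue by contradiction: suppose no admissible $\varepsilon_0,\kappa_0,\kappa_1$ exist. Then there is a sequence $u^{(n)}\in\HL$ with $E(u^{(n)})<J(Q)+1/n^2$, with $\inf_{\pm,b}\|u^{(n)}\mp\VQ(\cdot+b)\|_{L^2}\ge\delta$, and with the property that \emph{both} alternatives \eqref{-K bd} and \eqref{+K bd} fail for every choice of constants; concretely one extracts $u^{(n)}$ along which $K_0(u_1^{(n)})$ and $K_2(u_1^{(n)})$ are not bounded away from $0$ from the negative side, and also not bounded below by a fixed positive multiple of the respective $H^1$ quantities. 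The goal is then to show $u^{(n)}$ must converge (after translation) to $\pm\VQ$, contradicting the $\delta$-separation. Here it is cleaner to work with $\fy^{(n)}:=u_1^{(n)}=\D^{-1}\Re u^{(n)}\in H^1$, noting that $E(u)=\|u\|_{L^2}^2/2-\|u_1\|_{L^4}^4/4\ge J(\fy)$ with equality only when the imaginary part $u_2$ vanishes, so the energy hypothesis passes to $J(\fy^{(n)})<J(Q)+1/n^2$.

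The core input is the variational characterization $\inf\{J(\fy)\mid K(\fy)=0,\ \fy\in H^1\setminus\{0\}\}=J(Q)$ together with the two standard facts: (i) on the region $\{K_0\ge0\}$ (equivalently $\{K\ge0\}$ in the normalization of the paper, where $K=K_0$) one has $J(\fy)\gtrsim\|\fy\|_{H^1}^2$ via $J=\frac14K_0+\frac14\|\fy\|_{H^1}^2\ge\frac14\|\fy\|_{H^1}^2$, so if additionally $K_0$ is small the only way $J$ can approach $J(Q)$ is... it cannot, unless $\|\fy\|_{H^1}$ is bounded and one is near the constrained minimizer; (ii) on $\{K_0<0\}$, rescaling $\fy_\mu(x):=\fy(x/\mu)$ and examining $\mu\mapsto J(\fy_\mu)$ shows there is $\mu_0<1$ with $K_0(\fy_{\mu_0})=0$, whence $J(\fy)>J(\fy_{\mu_0})\ge J(Q)$ unless $K_0(\fy)$ is again quantitatively bounded away from $0$; the gap between $J(\fy)$ and $J(\fy_{\mu_0})$ is controlled below by $|K_0(\fy)|$ for $\fy$ bounded in $H^1$. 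Combining (i) and (ii): the only way to have $J(\fy^{(n)})\to J(Q)$ with $|K_0(\fy^{(n)})|\to0$ forces $\fy^{(n)}$ to be a minimizing sequence for the constrained problem. By the standard concentration-compactness / profile-decomposition argument for this subcritical variational problem (which I would cite rather than reprove), after translations $\fy^{(n)}(\cdot+b_n)\to\pm Q$ strongly in $H^1$; going back, $u^{(n)}(\cdot+b_n)\to\pm\VQ$ in $L^2$, contradicting the separation hypothesis. This handles the part of the trichotomy where $K_0$ fails to be bounded away from zero on either side; the genuinely quantitative positive lower bound $K_0(u_1)\ge\min(\kappa_1,\kappa_0\|u_1\|_{H^1}^2)$ on the $\{K_0\ge0\}$ side comes for free from $J(\fy)\ge\frac14\|\fy\|_{H^1}^2$ plus the observation that when $\|u_1\|_{H^1}$ is small the cubic term is negligible so $K_0(u_1)=\|u_1\|_{H^1}^2-\|u_1\|_{L^4}^4\ge\frac12\|u_1\|_{H^1}^2$, while when $\|u_1\|_{H^1}$ is large one is in the regime where the compactness argument gives a fixed negative lower bound $\kappa_1$ on $-K_0$ or else $K_0\ge\kappa_1$ outright.

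For the $K_2$ statements I would run the \emph{identical} scheme with the scaling $\fy^\mu(x):=\mu^{3/2}\fy(\mu x)$ (the $L^2$-invariant scaling), under which $\frac{d}{d\mu}J(\fy^\mu)\big|_{\mu=1}=K_2(\fy)$; the same trichotomy holds with $K_2$ in place of $K_0$ because the constrained minimum $\inf\{J(\fy)\mid K_2(\fy)=0\}$ again equals $J(Q)$ (this is part of the Payne--Sattinger/Nakanishi--Schlag variational package, and is proved in \cite{NLKGrad,ScatBlow}). The two alternatives are consistent across $s=0,2$ — i.e. $K_0$ and $K_2$ have the same sign in the relevant regime — because both are detected by the same minimizing-sequence dichotomy: on a $\delta$-separated, near-ground-state-energy family, either one is in the "$\PS_+$-like" set where both $K_0,K_2\ge\kappa_1$ away from $0$ (or $\gtrsim$ the norm near $0$), or in the "$\PS_-$-like" set where both are $\le-\kappa_1$; the excluded middle is exactly the near-$\pm\VQ$ region removed by the $\delta$-separation.

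The main obstacle is the compactness step: extracting strong convergence to $\pm Q$ from a minimizing sequence for $\inf\{J\mid K=0\}$ requires ruling out dichotomy/vanishing in the profile decomposition, which uses the strict subadditivity of the minimization (equivalently that $J(Q)$ is \emph{not} attained by splitting mass), and then upgrading weak to strong convergence via the constraint. In the non-radial setting one must also track the translation parameter $b_n$, but since the statement of the lemma already quantifies the separation modulo translations $\inf_{\pm,b}\|u\mp\VQ(\cdot+b)\|_{L^2}$, this is exactly the right quotient and causes no extra difficulty. Since all of this is carried out in \cite{NLKGrad} and \cite{ScatBlow} for precisely this $J,K_0,K_2$, I would present the proof as: (a) record $J(\fy)=\frac14 K_0(\fy)+\frac14\|\fy\|_{H^1}^2$ and the analogous identity involving $K_2$; (b) invoke the variational characterizations and the compactness lemma from those references; (c) run the contradiction argument above to produce $\varepsilon_0(\delta)$ and $\kappa_1(\delta)$; (d) dispatch the small-norm regime by hand to get $\kappa_0$.
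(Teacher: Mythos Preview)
Your approach matches the paper's: argue by contradiction separately for each $s\in\{0,2\}$, reduce to $K_s(u_1^{(n)})\to 0$ with $J(u_1^{(n)})\le J(Q)+o(1)$, then use concentration compactness to force $u_1^{(n)}(\cdot+b_n)\to\pm Q$ strongly in $H^1$, contradicting the $\delta$-separation once the $u_2$-part is absorbed via $E(u)=J(u_1)+\tfrac12\|u_2\|_2^2$. One caution on your citation plan: the compactness step is exactly the non-radial content of this lemma---\cite{NLKGrad} relied on the compact radial embedding $H^1_{\mathrm{rad}}\hookrightarrow L^4$ and so cannot be invoked here; the paper instead runs Lions' vanishing/compactness/dichotomy trichotomy explicitly, dispatching dichotomy via the positive functionals $G_s(\fy)=J(\fy)-c_sK_s(\fy)$ together with \cite[Lemma~2.12]{ScatBlow}, and only afterward defers to the connectedness argument of \cite{NLKGrad} (which does transfer verbatim) for the sign consistency between $K_0$ and $K_2$. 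Minor slip: the scaling adapted to $K_0$ is the amplitude scaling $\fy\mapsto\lambda\fy$, not the dilation $\fy(\cdot/\mu)$.
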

\begin{proof}
The proof is essentially the same as in the radial case \cite{NLKGrad}. The only major difference is that we cannot use the compact
 imbedding  $H^{1}_{\mathrm{rad}}\hookrightarrow L^4_x(\R^{3})$; instead, we shall use  the concentration compactness method of Lions, \cite{PL1}, \cite{PL2}.
 
We  first prove the statement separately for $K_0$ and $K_2$, by contradiction. Let $ u^n\in\HL$ be a sequence 
satisfying~\eqref{energy region} with $\e_0=1/n$ but neither \eqref{-K bd} nor \eqref{+K bd}, with either $s=0$ or $s=2$ fixed. 
In particular, $K_{s}(u^{n}_1)\to0$ as $n\to\I$. Since $E(u^{n})$ is uniformly bounded, we conclude that $\{u^{n}_{1}\}_{n=1}^{\I}$ is uniformly
bounded in $H^{1}$. 
First, if $\| u_1^{n}\|_{2}\to0$, then  by Sobolev imbedding also $\|u^{n}_{1}\|_{4}\to0$ as $n\to\I$ whence also $\| \nabla u_1^{n}\|_{2}\to0$. But then 
the lower bound in~\eqref{+K bd} does hold, which is a contradiction. Hence, we may assume that $\|u_{1}^{n}\|_{2}\to c_{0}>0$ as $n\to\I$. 

We apply the 
concentration compactness argument to the bounded sequence $u^n_1\in H^1(\R^3)$.  In the {\em compactness case}, there exists a sequence $y_{n}\in \R^{3}$
so that
\[
u_{1}^{n}(\cdot + y_{n}) \to u_{\I} \text{\ \ strongly in \ \ } L^{2}\cap L^{4}
\]
as $n\to\I$. 
Let
\EQ{
 \pt G_0(\fy):=J(\fy)-K_0(\fy)/4=\|\fy\|_{H^1}^2/4, 
 \pr G_2(\fy):=J(\fy)-K_2(\fy)/3=\|\na\fy\|_{L^2}^2/6+\|\fy\|_{L^2}^2/2}
But then $\nabla u^{n}_{1} \rightharpoonup \nabla u_{\I}$ weakly in $L^{2}(\R^{3})$, and 
\[
G_{s}(u_{\I})\le J(Q), \quad K_{s}(u_{\I})\le 0
\]
and $u_{\I}\ne0$ (the latter holds since $u_{\I}=0$ would entail that $\|\nabla u^{n}_{1}\|_{2}\to0$, which leads to a contradiction as before). But then $u_{\I}=\pm Q(\cdot + b)$
for some $b\in\R^{3}$ by the variational characterization of $Q$ (see, e.g.~\cite[Lemma~2.4]{ScatBlow}).   
This means that $\|\nabla u_{1}^{n} \|_{2}\to \|\nabla u_\I\|_{2}=\|\nabla Q\|_{2}$, whence also $u_{1}^{n} \to u_{\I}$ strongly in $H^{1}$. 
This contradicts~\eqref{energy region}. 

In the {\em vanishing case},  one has $\| u^{n}_{1}\|_{4}\to0$. But then $\|u_{1}^{n}\|_{H^{1}}\to0$, which leads to a contradiction as before. 

It remains to treat  the {\em dichotomy case}.  Thus, 
let $v_n$ and $w_n$ be the separating sequences. Since their $L^2$ norms converge to non-zero values, the positive functionals $G_0$ and $G_2$ 
converge to some values in $(0,J(Q))$, for both sequences. Since $Q$ is the minimizer of $G_s$ in the region $K_s\le 0$, we deduce that 
$K_s$ is positive for $v_n$ and $w_n$ for large $n$. On the other hand, $$\limsup[K_s(v_n)+K_s(w_n)]\le 0,$$ by the choice of $ u_n$, and 
so $K_s\to 0$ for both $v_n$ and $w_n$. Since $J(v_n)= G_s(v_n)+o(1)<J(Q)$, and the same for $w_n$ for large $n$, we deduce, from the 
lower bound on $K_s$ below the ground state energy~\cite[Lemma 2.12]{ScatBlow}, that $v_n$ and $w_n$ tend to $0$ in $\dot H^1$, and 
so in $L^4$. Finally,  so does $u^n_1$ which places us back in the vanishing case. 

After obtaining the conclusion separately for $s=0$ and $s=2$, the remaining proof in~\cite{NLKGrad} by connectedness 
works as well in the nonradial case, where $\la$ corresponds to $\sqrt{2}\la_1$ in this paper.  
\end{proof}

Just as in the radial case, the above two lemmas allow us to define the sign functional. We again restrict to~$\HL_{0}$. 

\begin{lem}[Sign functional]\label{lem:sign}
Let $\de_S:=\de_X/(2C_*)>0$ where $\de_X>0$ and $C_*\ge 1$ are the constants from~Lemma \ref{lem:eject}. Let $0<\de\le\de_S$ and 
\EQ{
 \HL_{(\de)}:=\{ u\in\HL_0 \mid E( u)<J(Q)+\min(d_Q( u)^2/2,\e_0(\de)^2)\},}
where $\e_0(\de)$ is given in Lemma~\ref{K lower bd}. Then there exists a unique continuous function $\Sg:\HL_{(\de)}\to\{\pm 1\}$ satisfying 
\EQ{
 \CAS{  u\in\HL_{(\de)},\ d_Q( u)\le\de_E &\implies \Sg( u)=-\sign\la_1,\\
  u\in\HL_{(\de)},\ d_Q( u)\ge\de &\implies \Sg( u)=\sign K_0(u_1)=\sign K_2(u_1),}}
where we set $\sign 0=+1$. 
\end{lem}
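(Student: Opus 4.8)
The plan is to \emph{define} $\Sg$ by the two prescribed formulas and then verify that this definition is consistent and continuous; since $\{d_Q<\de_E\}$ and $\{d_Q\ge\de\}$ cover $\HL_{(\de)}$ (because $\de\le\de_S<\de_X<\de_E$), uniqueness of such a $\Sg$ is automatic, so the whole content is existence. On the open set $\{d_Q(u)<\de_E\}\cap\HL_{(\de)}$ the function $-\sign\la_1$ is $\{\pm1\}$-valued and locally constant: the identity $d_Q(u)=E(u)-J(Q)+2k\la_1(u)^2$ from the nonlinear distance lemma above, combined with the defining inequality $E(u)<J(Q)+d_Q(u)^2/2$ of $\HL_{(\de)}$, forces $\la_1(u)\ne0$ as soon as $d_Q(u)>0$, and $d_Q(u)=0$ would mean $u=\pm\VQ(\cdot-c)$, a state of energy $J(Q)$ that is excluded from $\HL_{(\de)}$; also $\la_1$ is continuous in this region. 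On the closed set $\{d_Q(u)\ge\de\}\cap\HL_{(\de)}$, every $u$ satisfies $E(u)<J(Q)+\e_0(\de)^2$ and $\inf_{\pm,b}\|u\mp\VQ(\cdot+b)\|_{L^2}\gec d_Q(u)\ge\de$, so Lemma~\ref{K lower bd} (after a harmless adjustment of its constants) gives the clopen alternative that $K_0(u_1)$ and $K_2(u_1)$ are either both $\le-\ka_1<0$ or both strictly positive; hence $\sign K_0(u_1)=\sign K_2(u_1)$, locally constant there. By the pasting lemma for the open cover $\{d_Q<\de_E\}\cup\{d_Q>\de\}=\HL_{(\de)}$, everything reduces to showing the two formulas agree on the overlap, i.e. $-\sign\la_1(u)=\sign K_0(u_1)=\sign K_2(u_1)$ whenever $\de\le d_Q(u)\le\de_E$.

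This agreement is the crux, and I would establish it first in the sub-range $\de\le d_Q(u)\le\de_S$. Run the NLKG flow through such a $u$. In at least one time direction $d_Q$ is non-decreasing near $t=0$: indeed $\p_t d_Q=4k\la_1\dot\la_1$ with $\dot\la_1=k\la_2+O(d_Q^3)$, $\dot\la_2=k\la_1+O(d_Q^2)$ and $\la_1\ne0$, so any critical point of $d_Q$ along the flow is a strict local minimum. Applying the ejection lemma (Lemma~\ref{lem:eject}) in that direction with $R:=d_Q(u)\le\de_S<\de_X$ — the hypothesis $d_Q(u(t))\ge R$ holds on $(0,t_0)$ for small $t_0>0$ and then propagates up to the exit time via the conclusion $d_Q(u(t))\simeq e^{kt}R$ — the solution reaches $d_Q=\de_X$ at some finite $t_1>0$, and there $\sg K_s(u_1(t_1))\gec d_Q(u(t_1))-C_*R=\de_X-C_*R\ge\de_X-C_*\de_S=\de_X/2>0$ for $s=0,2$, with $\sg=-\sign\la_1(u(t_1))$; this positivity is exactly what the choice $\de_S=\de_X/(2C_*)$ guarantees. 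Thus $\sign K_s(u_1(t_1))=-\sign\la_1(u(t_1))$. Throughout $[0,t_1]$ one has $d_Q(u(t))\simeq-\sg\la_1(u(t))\gec R>0$, so $\sign\la_1(u(t))\equiv-\sg$ is constant, and $d_Q(u(t))\gec R\ge\de$ keeps $K_s(u_1(t))$ from vanishing by Lemma~\ref{K lower bd}, so $\sign K_s(u_1(t))$ is constant; hence $\sign K_s(u_1)=\sign K_s(u_1(t_1))=-\sign\la_1(u)$.

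It remains to pass from $[\de,\de_S]$ to the full overlap $[\de,\de_E]$: both $-\sign\la_1$ and $\sign K_0=\sign K_2$ are locally constant on $\{\de\le d_Q<\de_E\}\cap\HL_{(\de)}$ and coincide where $d_Q\le\de_S$, so they coincide on every connected component of the overlap that meets $\{d_Q\le\de_S\}$; that every component does so follows by a connectedness argument — e.g. deforming $v\mapsto\theta v$ towards the soliton manifold while staying in $\HL_{(\de)}$, as in the radial case~\cite{NLKGrad}. The main obstacle is this step in its entirety: one must feed a given \emph{state} into the \emph{dynamical} ejection lemma (which drives the flow outward up to the fixed scale $\de_X$), use the bookkeeping $\de_S=\de_X/(2C_*)$ to make the ejection lower bound on $\sg K_s$ strictly positive at the exit time, and then carry out the topological reduction of $[\de,\de_E]$ to the accessible range $[\de,\de_S]$.
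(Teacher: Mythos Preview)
Your proposal follows the approach of the radial case, which is exactly what the paper's one-line proof defers to. The core dynamical step—feeding a state with $d_Q(u)\in[\de,\de_S]$ into the ejection lemma, flowing until $d_Q=\de_X$ where the choice $\de_S=\de_X/(2C_*)$ forces $\sg K_s>0$, and propagating the sign back along the orbit using Lemma~\ref{K lower bd}—is correct and is the heart of the matter.

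There is, however, a nonradial-specific gap in your final connectedness step. The scaling $v\mapsto\theta v$ does not preserve zero momentum: from \eqref{momentum cond} one computes
\[
P(u_\theta)=\tfrac12\,\om(\VQ+\theta v,\na(\VQ+\theta v))=(\theta^2-\theta)\,\tfrac12\om(v,\na v),
\]
which is generically nonzero for $\theta\in(0,1)$, so the path $\theta\mapsto u_\theta$ leaves $\HL_{(\de)}\subset\HL_0$. In the radial setting this issue simply does not arise, which is why your appeal to~\cite{NLKGrad} does not close the argument here (and the paper's own proof glosses over the same point). One way to repair this: run the connectedness argument in the larger set $\{u\in\HL:E(u)<J(Q)+\min(d_Q(u)^2/2,\e_0(\de)^2)\}$ with no momentum constraint—both $-\sign\la_1$ and $\sign K_s$ remain well-defined and locally constant there by the nonlinear-distance lemma and Lemma~\ref{K lower bd}—and then, at the endpoint $u_{\theta_0}$ with $d_Q\approx\de_S$, apply a Lorentz transform $O(\|v\|^2)$-close to the identity (cf.~Lemma~\ref{loc Lorentz}) to return to $\HL_0$; this perturbs neither sign, and the ejection lemma then applies.
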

\begin{proof}
The proof is the same as in the radial case \cite{NLKGrad}. 
\end{proof}

\section{One-pass theorem}

Once we have obtained the hyperbolic and the variational estimates, the one-pass theorem 
is proved almost in the same way as in the radial case \cite{NLKGrad}. The only remaining, but minor, differences are
\begin{enumerate}
\item The center $c$ may be different for an ``almost homoclinic" orbit, between the departing time and the returning time.
\item In the hyperbolic region, the nonlinear distance function $d_Q( u)$ is not strictly convex in time.
\end{enumerate} 
For the issue (2), we have only to ignore small fluctuations around the ``bottom" of the hyperbolic trajectory. After choosing the small numbers $\e,R,\de_*>0$ as in \cite{NLKGrad}, let $u(t)$ be a solution on the maximal interval $I\subset\R$ satisfying for some $\t_1<\t_2<\t_3\in I$, 
\EQ{
 E( u)<J(Q)+\e^2, \pq \max_{j=1,3}d_Q( u(\t_j))<R<R+R^2<d_Q( u(\t_2)).}
Then there exist $T_1\in(\t_1,\t_2)$ and $T_2\in(\t_2,\t_3)$ such that 
\EQ{
 \pt d_Q( u(T_1))=R=d_Q( u(T_2))<d_Q( u(t)) \pq (T_1<t<T_2),
 \pr \max_{T_1<t<T_2} d_Q( u(t))>R+R^2.}
Let $\M$ be the totality of minimal points $t_m\in[T_1,T_2]$ of $d_Q( u(t))$ with a minimum $<\de_*$. By the ejection Lemma \ref{lem:eject}, we can extract a finite sequence $t_1<t_2<\cdots<t_n\in\M$ for some $n\ge 2$ such that $t_1=T_1$, $t_n=T_2$, and for each $j=1,\dots,n-1$, 
\EQ{
 \pt \max_{t_j<t<t_{j+1}}d_Q( u(t))\ge \de_X,}
and for some $t_{j+1/3}<t_{j+2/3}\in(t_j,t_{j+1})$, 
\EQ{ 
 \CAS{
 d_Q( u(t))\simeq e^{k|t-t_j|}d_Q( u(t_j)) &(t_j<t<t_{j+1/3}),\\
 d_Q( u(t_{j+1/3}))=\de_X=d_Q( u(t_{j+2/3})),\\
 d_Q( u(t))\ge R_* &(t_{j+1/3}<t<t_{j+2/3}),\\ 
 d_Q( u(t))\simeq e^{k|t-t_{j+1}|}d_Q( u(t_{j+1})) &(t_{j+2/3}<t<t_{j+1}).}}
Note that the assumption $d_Q( u(\t_2))>R+R^2$ was used to have at least one time of ejection up to $\de_X$ between $T_1$ and $T_2$, otherwise $u$ could be just fluctuating around $d_Q\simeq R$ on the whole interval $[T_1,T_2]$.

The modification required by the other issue (1) is also straightforward. 
We define the cut-off function $w(t,x)$ for the virial identity by
\EQ{
 w=\chi((x-c(T_1))/(t-T_1+S))\chi((x-c(T_2))/(t-T_2+S))}
for some $S>0$ satisfying $|\log R|\ll S\ll 1/R$, where $\chi(x)\in C_0^\I(\R^3)$ is a fixed radial function satisfying $\chi(x)=1$ for $|x|\le 1$ and $\chi(x)=0$ for $|x|\ge 2$. 
We have the localized virial identity 
\EQ{
 V_w(t):=\LR{wu_t|(x\na+\na x)u}, \pq \dot V_w(t)=-K_2(u_1(t))+O(\Ex(t)),}
where $\Ex$ denotes the exterior energy defined by 
\EQ{
 \pt \Ex(t)=\frac12 \int_{X_1(t)}[|\dot u_2|^2+|\na u_1|^2+|u_1|^2]\, dx,
 \pr x\in X(t) \iff |x-c(T_1)|>(t-T_1+S) \text{ or } |x-c(T_2)|>(T_2-t+S).}
Then  as in \cite{NLKGrad}, we have 
\EQ{
 \Ex(t) \lec \Ex(T_1)+\Ex(T_2) \lec R^2 \pq (T_1<t<T_2),}
and so
\EQ{
 \dot V_w(t) = -K_2(u_1(t)) + O(R^2) \pq (T_1<t<T_2).}
In conclusion, by the same argument as in \cite{NLKGrad} we arrive at the following no-return statement. 

\begin{thm}[One-pass theorem]
There are constants $\e_*,R_*>0$ such that $2\e_*<R_*<\de_X$ with the following property: If $u\in C(I;\HL)$ is a solution of NLKG \eqref{eq in vec u} on an interval $I$ such that for some $\e\in(0,\e_*]$, $R\in(2\e,R_*]$ and $\t_1<\t_2\in I$, 
\EQ{
 E( u) < J(Q) + \e^2, \pq d_Q( u(\t_1))<R<R+R^2<d_Q( u(\t_2)),}
then for all $t\in(\t_2,\I)\cap I=:I'$, we have $d_Q( u(t))>R$. 
\end{thm}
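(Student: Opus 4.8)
The plan is to argue by contradiction, following the contradiction hypothesis set up just before the statement. Suppose the solution $u$ returns to the sublevel set $\{d_Q\le R\}$ at some time $\t_3>\t_2$; combined with $d_Q(u(\t_1))<R$ and $d_Q(u(\t_2))>R+R^2$, this produces the configuration described in the excerpt: times $T_1\in(\t_1,\t_2)$ and $T_2\in(\t_2,\t_3)$ with $d_Q(u(T_1))=R=d_Q(u(T_2))$, the distance staying above $R$ in between, and the maximum exceeding $R+R^2$, so at least one genuine ejection up to $\de_X$ occurs. This is the ``almost homoclinic'' loop we must rule out. The contradiction will come from the localized virial identity $\dot V_w(t)=-K_2(u_1(t))+O(R^2)$: integrating over $[T_1,T_2]$, the left side is controlled by $|V_w(T_1)|+|V_w(T_2)|$, which is $O(RS)$ because $\|v(T_j)\|_2\simeq R$ and the cutoff localizes to a ball of radius $O(S)$; meanwhile the right side must be bounded away from zero with a definite sign, yielding $R\,S\gec$ (length of the loop $\times$ a positive constant), and since the loop has length $\gec S$ (it takes time $\gec|\log R|$ just to eject and return, and in fact $\gec S$ by the choice $S\ll 1/R$), we get a contradiction for $R$ small.

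The heart of the matter is showing that $\int_{T_1}^{T_2}K_2(u_1(t))\,dt$ has a definite sign with the right lower bound. Here one splits $[T_1,T_2]$ using the ejection structure: the sign functional $\Sg(u)$ from Lemma~\ref{lem:sign} is constant (say equal to $\s$) along the whole loop, because $\Sg$ is continuous, $\{\pm1\}$-valued, and $u(t)$ stays in $\HL_{(\de)}$ for an appropriate $\de$ (the loop never leaves a neighborhood where the sign functional is defined — this uses $R_*<\de_X$ and the energy bound $E(u)<J(Q)+\e^2<J(Q)+\e_0(\de)^2$). On the hyperbolic pieces near each $t_j$, the ejection Lemma~\ref{lem:eject} gives $\s K_s(u_1(t))\gec d_Q(u(t))-C_*d_Q(u(t_j))\gec d_Q(u(t))-C_*R$, which is genuinely positive once $d_Q(u(t))\ge 2C_*R$, i.e. on a time interval of length $\gec|\log R|/k$ around the excursion; on the far region $t_{j+1/3}<t<t_{j+2/3}$ where $d_Q\ge R_*\gec\de$, the variational lower bound Lemma~\ref{K lower bd} gives $\s K_2(u_1(t))\gec\ka_1(\de)$, a fixed positive constant. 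Summing over the $n-1$ ejection sub-loops, $\s\int_{T_1}^{T_2}K_2(u_1)\,dt\gec (n-1)\cdot(\text{length of one excursion})\cdot\ka_1\gec (T_2-T_1)\ka_1 - O(nR\cdot\text{short times})$, which dominates any $O((T_2-T_1)R^2)$ or boundary error.

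The main obstacle — and the place where the non-radial case genuinely differs from \cite{NLKGrad} — is the bookkeeping of the two centers $c(T_1)$ and $c(T_2)$ in the cutoff $w$. One must check that the exterior energy $\Ex(t)$ is still $O(R^2)$ on $[T_1,T_2]$ despite the center having drifted between $T_1$ and $T_2$: from \eqref{bd on ct} we have $|\dot c|\lec\|v\|_2^2$, so over the whole loop $|c(T_2)-c(T_1)|\lec\int_{T_1}^{T_2}\|v(t)\|_2^2\,dt$, and the dangerous contribution is the hyperbolic bulk where $\|v\|_2\simeq e^{k|t-t_j|}R$ can be as large as $\de_X$; but there the relevant time is only $O(|\log R|/k)$ per excursion and the $L^2_t$-norm of $\|v\|_2^2$ is still $O(R^2\de_X)\ll S^{-1}$-compatible, so the displacement of the center is $o(S)$ and the two balls of radius $\sim t-T_j+S$ still cover the essential support of the solution — the finite-speed-of-propagation truncation argument of \cite{NLKGrad} goes through with $c$ replaced by $c(T_1)$ on the left half and $c(T_2)$ on the right half. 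Once this is in hand, the exterior energy estimate $\Ex(t)\lec\Ex(T_1)+\Ex(T_2)\lec R^2$ follows exactly as in \cite{NLKGrad} from the energy-flux inequality, and the contradiction closes as above. I would present the $c$-drift estimate carefully and then cite \cite{NLKGrad} for the remaining virial/energy-flux computation verbatim.
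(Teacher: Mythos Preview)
Your overall strategy coincides with the paper's: argue by contradiction, extract the ejection structure $t_1<\cdots<t_n$, control the sign of $K_2$ via Lemma~\ref{lem:eject} on the hyperbolic pieces and Lemma~\ref{K lower bd} on the far pieces, and close with the localized virial identity using the double-cone cutoff $w$ built from the two centers $c(T_1)$, $c(T_2)$. This is precisely what the paper does before referring back to \cite{NLKGrad}.

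The one genuine misstep is your treatment of the center drift. You assert that one must bound $|c(T_2)-c(T_1)|=o(S)$ in order to get $\Ex(t)\lec R^2$, and you propose doing so by integrating $|\dot c|\lec\|v\|_2^2$ over $[T_1,T_2]$. Both parts of this are wrong. First, the exterior energy bound requires no drift control whatsoever: the exterior region $X(t)$ is the union of the exteriors of the forward cone from $(T_1,c(T_1))$ and the backward cone from $(T_2,c(T_2))$, and finite propagation speed bounds the energy in each piece by the corresponding exterior energy at the respective endpoint, which is $O(e^{-2S}+R^2)=O(R^2)$ since $S\gg|\log R|$. The two centers may be arbitrarily far apart; the double-cone construction is designed exactly so that this does not matter. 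Second, your drift integral is ill-defined: the decomposition $u=\sg(\VQ+v)(\cdot-c)$ and the estimate \eqref{bd on ct} are only available where $d_Q(u)\le\de_E$, but on the far intervals $(t_{j+1/3},t_{j+2/3})$ the distance $d_Q$ may well exceed $\de_E$, so neither $c(t)$ nor $v(t)$ exists there, and $\int_{T_1}^{T_2}\|v\|_2^2\,dt$ has no meaning. Drop the drift discussion entirely; once you have $\Ex(t)\lec R^2$ from the two-cone argument, the remainder is verbatim \cite{NLKGrad}, exactly as the paper says.
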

Moreover, there exist a constant $\de_*\in(0,\de_S)$ (independent of $u$) such that $\e_*<\e_0(\de_*)$, and disjoint subintervals $I_1,I_2,\dots\subset I'$ with the following property: On each $I_m$, there exists $t_m\in I_m$ such that 
\EQ{
 d_Q( u(t))\simeq e^{k|t-t_m|}d_Q( u(t_m)), \pq \min_{s=0,2}\sg K_s(u_1(t))\gec d_Q( u(t))-C_*d_Q( u(t)).}
where $\sg=\Sg( u(t))\in\{\pm 1\}$ is constant, $d_Q( u(t))$ is increasing for $t-t_m\gg R^2$, decreasing for $t_m-t\gg R^2$, and equals to $\de_X$ on $\p I_m$. For each $t\in I'\setminus\Cu_m I_m$ and $s=0,2$, we have $(t-1,t+1)\subset I'$, $d_Q( u(t))\ge \de_*$, and 
\EQ{
 \int_{t-1}^{t+1}\min_{s=0,2}\sg K_s(u_1(t'))dt' \gg R_*^2.}

\section{Dynamics after ejection}
\label{sec:dyn}

After the ejection, we obtain the blowup in the region $\Sg=-1$ by the same argument (Payne-Sattinger) as in the radial case \cite{NLKGrad}. 

For the scattering after ejection in the region $\Sg=+1$, we need some small modifications. First, we should replace the linear profile decomposition given in \cite{NLKGrad} with the general version in \cite{ScatBlow}. We can keep the Strichartz norm $L^3_tL^6_x$, but it is more convenient to use the symmetric $L^4_{t,x}$ norm. It is standard and easy to see that these norms are interchangeable for the solution $u$ of \eqref{eq in vec u}, because by H\"older's inequality 
\EQ{
 \pt \|u_1\|_{L^3_tL^6_x} \lec \|u_1\|_{L^4_tL^4_x}^{\frac13}\|u_1\|_{L^{8/3}_tL^8_x}^{\frac23},
 \pq \|u_1\|_{L^4_tL^4_x} \lec \|u_1\|_{L^\I_t L^2_x}^{\frac14}\|u_1\|_{L^3_tL^6_x}^{\frac34},}
and by Strichartz 
\EQ{
 \|u_1-v_1\|_{(L^\I_t H^1_x\cap L^{8/3}_tL^8_x)(0,T)}
 \pn\lec \|u_1^3\|_{L^1_tL^2_x(0,T)}
 \pt\lec \|u_1\|_{L^3_tL^6_x(0,T)}^3
 \pr\lec \|u_1\|_{L^4_tL^4_x(0,T)}  \|u_1\|_{L^{8/3}_tL^8_x}^{2},}
where $v$ denotes the free solution with $ v(0)= u(0)$. Hence each of the two norms can control the other (with the aid of the uniform energy bound in the region 
$\Sg=+1$). 

Another issue is that the nonlinear profiles for a sequence of solutions in $\HL_0$ do not necessarily have vanishing momentum. 
Hence, after constructing  the single profile $u^0$ which is not scattering, we need to Lorentz transform it back into $\HL_0$. 
Here a crucial observation is that $\|u\|_{L^4_tL^4_x(0,\I)}<\I$ is preserved by any Lorentz transform. More precisely, we have the following result. 

\begin{lem} \label{global Lorentz}
Let $u$ be a finite energy solution of NLKG \eqref{eq:NLKG} on $(T,\I)$, and let $u'$ be a Lorentz transform of $u$. Then there exists $T'\in\R$ such that $u'$ extends to a finite energy solution on $(T',\I)$. Moreover, if $\|u'\|_{L^4_{t,x}(t>T')}<\I$ then $\|u\|_{L^4_{t,x}(T,\I)}<\I$. 
\end{lem}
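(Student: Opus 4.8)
The plan is to exploit the Lorentz invariance of the $L^4_{t,x}$ norm together with Strichartz theory to promote a local-in-time extension into a global one. The statement involves two separate claims: (i) global existence of $u'$ on a half-line $(T',\infty)$, and (ii) the implication $\|u'\|_{L^4_{t,x}(t>T')}<\infty \Rightarrow \|u\|_{L^4_{t,x}(T,\infty)}<\infty$. I would handle (ii) first since it is purely geometric, and then (i).

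For (ii), the key point is that the $L^4$ space-time norm $\|u_1\|_{L^4_{t,x}}$ (equivalently $\|u\|_{L^4_{t,x}}$, after passing to the component via~\eqref{vec component} and using the uniform energy bound in the relevant region) is conformally weighted in exactly the way that makes it Lorentz invariant: the Lorentz transform~\eqref{Lorentz} is a measure-preserving linear change of variables on $\R^{1+3}$, so $\int |u_1|^4\,dt\,dx$ over a Lorentz-transformed region equals the same integral for $u'$ over the corresponding image region. Since a Lorentz boost maps the half-space $\{t>T\}$ (minus a bounded set, if one is careful near the boundary) into a region contained in some half-space $\{t>T'\}$, finiteness of $\|u'\|_{L^4_{t,x}(t>T')}$ transfers back to $\|u\|_{L^4_{t,x}(t>T)}<\infty$, hence $\|u\|_{L^4_{t,x}(T,\infty)}<\infty$. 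One must check that the geometric image of $\{t>T\}$ under the boost is indeed contained in $\{t>T'\}$ up to a set of finite measure or that the overlap can be absorbed; this is elementary hyperbolic geometry but needs to be stated carefully.

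For (i), the plan is: first observe that the finite-energy solution $u$ on $(T,\infty)$ has $\|u\|_{L^4_{t,x}(\text{loc})}<\infty$ on every compact subinterval by local wellposedness and persistence of regularity; combined with the uniform energy bound this gives $\|u_1\|_{L^4_{t,x}}<\infty$ on compact sets, hence a Lorentz-invariant local space-time bound. Then pull back along the Lorentz transform: the image of any compact time-slab $(T+a,T+b)\times\R^3$ under the boost covers a compact time-slab for $u'$, so $u'$ has finite $L^4_{t,x}$ norm on compact slabs, which by the small-data/local theory and the Strichartz bootstrap displayed just before the lemma (the $u_1^3$ estimate in $L^1_tL^2_x$) implies that $u'$ extends as a finite-energy solution past any finite time, i.e. on some maximal interval $(T',T'_*)$ with $T'_*=\infty$. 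The only subtlety is that a Lorentz boost tilts time slices, so one cannot literally say "the image of a time slab is a time slab"; instead one covers the image by finitely many time slabs of $u$ and uses finite additivity of the $L^4$ integral, or equivalently one argues that the maximal existence interval of $u'$ must be a full half-line because its complement would have to correspond, under the inverse boost, to a point where $u$ breaks down, contradicting global existence of $u$ on $(T,\infty)$.

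The main obstacle I expect is the careful bookkeeping of how the Lorentz boost acts on half-spaces and slabs, in particular handling the boundary region near $t=T$ (or near $t=T'$) where the tilted light-cone geometry means the image of $\{t>T\}$ is not exactly a half-space; one has to either shrink to $\{t>T+\text{const}\}$ and argue that the lost slab contributes only a finite $L^4_{t,x}$ amount, or invoke the finite-speed-of-propagation / finite-energy structure to control it. The genuinely analytic input — that a finite local $L^4_{t,x}$ bound plus uniform energy bound prevents blow-up — is standard Strichartz theory and is essentially the computation already displayed before the lemma statement, so that part should go through routinely.
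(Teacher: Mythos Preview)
Your proposal has a genuine gap rooted in a geometric error. You write that a Lorentz boost maps $\{t>T\}$ ``(minus a bounded set) into a region contained in some half-space $\{t>T'\}$.'' This is false: the image of $\{t>T\}$ under a boost is a tilted half-space $\{t\cosh\nu - x_1\sinh\nu > T\}$, and its symmetric difference with any $\{t'>T'\}$ has infinite measure. Consequently, $u'$ is not even defined on a single full time slice $\{t'=\text{const}\}$ from the data of $u$ on $(T,\infty)\times\R^3$ alone, so there is no Cauchy data from which to run your Strichartz bootstrap for part~(i). Likewise for part~(ii), the leftover region $\{t>T\}\cap\{t'\le T'\}$ is unbounded, so you cannot absorb it as a finite $L^4$ correction without further input.

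You do mention finite propagation speed as a possible fix, but you treat it as boundary bookkeeping when it is in fact the central idea. The paper's argument is this: cut off the data $\vec u(T)$ outside a large ball of radius $S$, solve NLKG globally with this small data to get $w$ with $\|w\|_{L^4_{t,x}(\R^{1+3})}<\infty$, and observe by finite propagation that $u=w$ on $\{|x|>S+|t-T|\}$. This extends $u$ to the region $(T,\infty)\times\R^3 \cup \{|x|>S+|t-T|\}$, and \emph{this} region (half-space union light-cone exterior) does contain a half-space $\{t'>T'\}$ after any Lorentz transform, giving~(i) immediately. For~(ii), the leftover region $\{t>T,\ t'\le T',\ |x|<S+|t-T|\}$ is now genuinely bounded in space-time, so its $L^4$ contribution is finite by the energy bound and Sobolev embedding; outside the light cone one uses $u=w$ and $\|w\|_{L^4_{t,x}}<\infty$.
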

\begin{proof}
Let $\chi\in C_0^\I(\R^3)$ be the cut-off function as before, and let $w$ be the solution of NLKG with $\vec w(T)=(1-\chi(x/S))\vec u(T)$. If $S\gg 1$ is large enough, then $w$ is global with $\|w\|_{L^4_{t,x}(\R^{1+3})}<\I$, by the small data scattering theory. By the finite propagation property, $u(t,x)=w(t,x)$ for $|x|>S+|t-T|$. Hence $u$ extends to this region for $t<T$. The image of the region 
\EQ{
 (T,\I)\times\R^3 \cup\{(t,x)\mid|x|>S+|t-T|\}}
by any Lorentz transform contains $(T',\I)\times\R^3$ for some $T'>0$. Then the transform of $u$ solves NLKG for $t>T'$. 
After a suitable rotation, we may assume that the Lorentz transform is in the form \eqref{Lorentz}. 
If $\|u'\|_{L^4_{t,x}(t>T')}<\I$, then 
\EQ{
 \I>\|u'\|_{L^4_{t,x}(t>T')} \ge  \|u\|_{L^4_{t,x}(t\cosh\nu-x_1\sin\nu>T'\text{ and } t>T)}.}
Since the remaining region
\EQ{
 \{(t,x)\mid t>T,\ t\cosh\nu-x_1\sinh\nu<T',\text{ and }\ |x|<S+|t-T|\} }
is bounded in space-time, the $L^4_{t,x}$ norm of $u$ in that region is bounded by the Sobolev embedding. Hence $\|u\|_{L^4_{t,x}(t>T)}<\I$. 
\end{proof}

Similarly, we have a local version of the above:

\begin{lem} \label{loc Lorentz}
Let $u$ be a finite energy solution of NLKG \eqref{eq:NLKG} on a time interval $I\ni T$. Then there is an open neighborhood 
$O$ of the identity in the Lorentz group, such that the transform of $u$ by any $g\in O$ extends to a solution in a space-time region including a time slab which contains $T$. 
\end{lem}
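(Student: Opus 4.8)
The plan is to mimic the proof of Lemma~\ref{global Lorentz}, but to work only in a bounded space-time region and therefore avoid any global-in-time scattering input. First I would fix an arbitrary $T_0\in I$ with $T_0\ne T$ and a closed subinterval $[a,b]\subset I$ with $T$ in its interior on which $\vec u$ has finite energy norm; it suffices to produce the claimed extension for transforms of the restriction $u|_{[a,b]}$, since dynamically the transform of $u$ near the slab $\{t=T\}$ only depends on $u$ on a compact set by finite propagation speed.

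The key step is a finite-propagation-speed truncation, exactly as in Lemma~\ref{global Lorentz}. I would choose $S\gg 1$ and let $w$ solve NLKG with Cauchy data $\vec w(T)=(1-\chi(x/S))\vec u(T)$; by small-data theory $w$ is a global finite-energy solution, and $u(t,x)=w(t,x)$ whenever $|x|>S+|t-T|$ and $t$ lies in the existence interval of $u$ around $T$. Thus $u$, after gluing with $w$, is defined on the \emph{full} region $\R_{t}\times\R^3 \setminus \{(t,x)\mid |x|\le S+|t-T|\}$ together with a genuine space-time neighborhood of the slab $\{T\}\times\R^3$ (taking $[a,b]$ small enough that $u$ exists on all of $[a,b]\times\R^3$, which one may do since the energy norm stays finite there). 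Call this open set $\Omega_S$; it contains the slab $\{T\}\times\R^3$ in its interior. Now, for a Lorentz transform $g$ close enough to the identity, $g$ maps the hyperplane $\{t=T\}$ to a (slightly tilted) hyperplane, and more importantly $g^{-1}$ maps some space-time slab $\{|t-T|<\delta\}$ (with $\delta=\delta(S,g)>0$) into $\Omega_S$: indeed the complement of $\Omega_S$ is the solid backward-and-forward light cone of aperture $1$ emanating from $\{|x|\le S\}\times\{T\}$, which is a closed set whose intersection with $\{t=T\}$ is the compact ball $\{|x|\le S\}$; a small Lorentz boost tilts this cone only slightly, so for $g$ near the identity the tilted cone still meets the slab $\{|t-T|<\delta\}$ only inside $\{|x|\lesssim S\}$, i.e. the transformed cone does not swallow any point of $\Omega_S\cap\{|t-T|<\delta\}$ outside a compact set — and on that compact set $u$ is already defined. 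Hence $u\circ g^{-1}$ (the transform of $u$) is a finite-energy solution of NLKG on an open space-time region containing a slab about $t=T$. Choosing $O$ to be the set of such $g$ gives the neighborhood of the identity in the statement.

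The main obstacle, and the only point requiring care, is the uniformity in $g$: one must verify that the tilt of the light cone under a Lorentz transform $g$ in a fixed small neighborhood of the identity is itself small and, crucially, controlled independently of how close one later wants the slab to be to $\{t=T\}$ — i.e. that there is a \emph{single} neighborhood $O$ and, for each $g\in O$, a possibly $g$-dependent but positive $\delta(g)$ with $g^{-1}(\{|t-T|<\delta(g)\})\subset\Omega_S\cup(\text{compact set where }u\text{ is defined})$. This is elementary once one writes the complement of $\Omega_S$ as $\{(t,x): |x| \le S + |t-T|\}$ and notes that the image of this set under a boost of rapidity $|\nu|<\nu_0$ is contained in $\{(t,x): |x| \le C(S,\nu_0) + (1+C\nu_0)|t-T|\}$ for a constant $C$ depending only on $\nu_0$; any slab about $t=T$ thin enough (in terms of $S$ and $\nu_0$) then meets this set only in a bounded region. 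The Sobolev-embedding argument that the energy norm of $u$ stays finite on the (bounded) transition region, already used in Lemma~\ref{global Lorentz}, applies verbatim to confirm that the resulting transformed solution indeed has finite energy on the slab.
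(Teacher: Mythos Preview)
Your proof is correct and follows the same approach as the paper's: construct the global small-data solution $w$ with $\vec w(T)=(1-\chi(\cdot/S))\vec u(T)$, extend $u$ to $I\times\R^3\cup\{|x|>S+|t-T|\}$ by finite propagation speed, and observe that any Lorentz transform sufficiently close to the identity maps this region onto one containing a time slab about $t=T$. The paper compresses this into a single sentence, whereas you spell out the geometric verification; two cosmetic remarks: the $T_0$ you introduce is never used, and the closing appeal to Sobolev embedding is unnecessary here (it was invoked in Lemma~\ref{global Lorentz} for the $L^4_{t,x}$ norm, not for the energy, which on the new slab follows directly from the energy bounds on $u$ and $w$).
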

\begin{proof}
Let $w$ be the global solution as given in the proof of the previous lemma. Then $u$ extends to $I\times\R^3\cup\{|x|>S+|t-T|\}$, which is mapped to a region containing a time slab by any Lorentz transform sufficiently close to the identity. 
\end{proof}

Therefore the argument in \cite{NLKGrad} works in the nonradial setting, by using the Lorentz transform of $u^0$ with $0$ momentum in applying the ejection lemma, and 
going back to the original profile $u^0$ when using it to approximate the minimizing sequence of solutions. 
Thus we obtain a critical element as well as its precompactness. After that the argument is the same as in Kenig-Merle \cite{KM2}, or more verbatim 
in~\cite{ScatBlow} (neither using the radial symmetry). 

Hence,  we arrive at the following conclusion. 

\begin{thm}
Let $0<\e\le\e_*$ and let $u\in C([0,T);\HL)$ be a solution of NLKG \eqref{eq in vec u} on a forward maximal interval $[0,T)$ such that $P( u)=0$, 
\EQ{
 E( u)\le J(Q)+\e^2, \pq d_Q( u(t))\ge R_*, \pq \Sg( u(t))=\pm 1 \ (0\le t<T).}
If $\Sg=-1$, then $T<\I$. If $\Sg=+1$, then $T=\I$ and $u$ scatters to $0$ as $t\to\I$. Moreover, there is $M\in(0,\I)$ determined only by $\e$ such that $\|u_1\|_{L^4_{t,x}(t>0)}\le M$. 
\end{thm}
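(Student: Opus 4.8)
The plan is to combine the one-pass theorem with the ejection lemma and the variational lower bound to reduce to a situation where either the Payne--Sattinger blowup mechanism applies or a Kenig--Merle concentration-compactness argument yields scattering. First I would record that, under the hypothesis $d_Q(u(t))\ge R_*$ for all $t\in[0,T)$, the sign functional $\Sg(u(t))$ is defined and continuous (since $u(t)\in\HL_{(\de_*)}$ once $\e_*<\e_0(\de_*)$ and $R_*<\de_X$), hence constant, equal to $\pm1$ by hypothesis. The one-pass theorem then guarantees that after an initial transient the distance $d_Q$ either stays above $\de_X$ or performs at most the ejection excursions described there; in either case the quantitative control $\min_{s=0,2}\sg K_s(u_1(t))\gec d_Q(u(t))-C_* d_Q(u(t))\gec R_*^2$ holds on a set of times of positive density, because $C_*\ge1$ is absorbed by choosing $R_*$ appropriately (this is exactly the output displayed after the one-pass theorem, where on the exceptional part $I'\setminus\bigcup_m I_m$ one has $\int_{t-1}^{t+1}\min_s \sg K_s\,dt'\gg R_*^2$, and on each $I_m$ the ejection lemma gives the pointwise bound).

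Next, for the case $\sg=\Sg=-1$: here $K_2(u_1(t))\le -\ka$ for a fixed $\ka>0$ along a positive-density set of times, by the above and Lemma~\ref{K lower bd} (applied with $\de\simeq R_*$). I would then run the localized virial identity $\dot V_w(t)=-K_2(u_1(t))+O(\Ex(t))$ with the exterior-energy bound $\Ex(t)\lec R_*^2\ll\ka$, exactly as in the radial case. Integrating, $V_w(t)$ decays at a linear rate while remaining bounded by $O(S\|\vec u(t)\|_\HH)$ — a contradiction unless the maximal time $T$ is finite. This is the Payne--Sattinger-type blowup argument and it goes through verbatim from \cite{NLKGrad} once the ingredients above are in place; the only nonradial wrinkle, the wandering center $c(t)$, was already handled in the construction of the cutoff $w$ with two separate light cones centered at $c(T_1)$ and $c(T_2)$.

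For the case $\sg=\Sg=+1$: the goal is global existence and scattering, plus the uniform bound $\|u_1\|_{L^4_{t,x}(t>0)}\le M(\e)$. The strategy is the standard Kenig--Merle/Duyckaerts--Merle rigidity scheme as executed in \cite{ScatBlow}: suppose scattering fails or the $L^4_{t,x}$ norm is unbounded along some sequence of solutions; extract via the linear profile decomposition (in the general, nonradial form of \cite{ScatBlow}) a critical element $u_*$ with $E(u_*)\le J(Q)+\e^2$, $d_Q\ge R_*$, $\Sg=+1$, and precompact (modulo translation) trajectory. Here one uses the equivalence of the $L^3_tL^6_x$ and $L^4_{t,x}$ Strichartz norms recorded in the excerpt, together with the uniform energy bound available in the $\Sg=+1$ region. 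The one novelty over the radial case is that the first nonlinear profile need not have zero momentum; one Lorentz-transforms it into $\HL_0$ to apply the ejection lemma, using Lemma~\ref{global Lorentz} to see that finiteness of $\|u'\|_{L^4_{t,x}}$ is equivalent to finiteness of $\|u\|_{L^4_{t,x}}$, and Lemma~\ref{loc Lorentz} for the local well-posedness of the transformed solution; then one transforms back to $u^0$ when approximating the minimizing sequence. Having produced a precompact critical element in $\HL_0$ with $d_Q\ge R_*$ and $\Sg=+1$, the variational lower bound forces $K_0(u_{*1}(t))\ge\ka'>0$ uniformly, and then the localized virial identity (now with the sign $+K_2$ dominating) combined with precompactness rules it out — exactly the contradiction of \cite{KM2}, \cite{ScatBlow}.

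The main obstacle I expect is the bookkeeping around the Lorentz transforms in the profile decomposition: one must verify that the ejection and variational estimates, which are naturally formulated in $\HL_0$, survive the passage between $u^0$ and its normalized Lorentz transform, and that the precompactness of the critical element is not destroyed — i.e., that the critical element can itself be taken in $\HL_0$ with $P=0$. This is conceptually routine given Lemmas~\ref{global Lorentz} and~\ref{loc Lorentz} and the fact that dynamical properties (scattering, blowup, finiteness of space-time norms) are Lorentz invariant, but it is where the nonradial case genuinely departs from \cite{NLKGrad} and where the argument must be assembled with care rather than merely quoted.
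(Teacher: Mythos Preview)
Your $\Sg=+1$ argument is essentially correct and matches the paper's approach: nonradial profile decomposition from \cite{ScatBlow}, Lorentz-transform the nonlinear profile into $\HL_0$ via Lemmas~\ref{global Lorentz} and~\ref{loc Lorentz}, extract a precompact critical element, and eliminate it by the virial argument of~\cite{KM2,ScatBlow}. The bookkeeping you flag at the end is exactly what the paper handles, in the way you describe.

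Your $\Sg=-1$ argument, however, has a genuine gap. The localized virial identity is \emph{not} the Payne--Sattinger mechanism and does not by itself yield finite-time blowup. Linear growth of $V_w(t)$ together with $|V_w(t)|\lec S\|\vec u(t)\|_\HH^2$ only rules out global solutions with \emph{bounded} energy norm; it does not exclude $T=\I$ with $\|\vec u(t)\|_\HH\to\I$. (In the $\Sg=+1$ rigidity step this is harmless because precompactness supplies the uniform bound; here there is no such bound.) Moreover, the exterior-energy estimate $\Ex(t)\lec R_*^2$ that you quote from the one-pass proof relies on the solution being $O(R)$-close to $Q$ at the endpoint times $T_1,T_2$ where the cutoff $w$ is anchored; under the present hypothesis $d_Q\ge R_*$ for all $t$, no such times exist, so that construction and bound are unavailable.

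The paper, following \cite{NLKGrad}, uses the actual Payne--Sattinger concavity argument: once $\Sg=-1$ and $d_Q\ge R_*$ force $K_0(u_1)\le-\ka$, the identity
\EQ{
 \p_t^2\|u_1\|_{L^2}^2 = 2\|u_2\|_{L^2}^2 - 2K_0(u_1) \ge 6\|u_2\|_{L^2}^2 + 2\|u_1\|_{H^1}^2 - 8E(u),}
combined with Cauchy--Schwarz on $\p_t\|u_1\|_{L^2}^2 = 2\LR{u_1|u_2}$, gives a differential inequality of the type $y\ddot y \ge (1+\al)(\dot y)^2$ for $y=\|u_1\|_{L^2}^2$, which forces $y\to\I$ in finite time directly. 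This is what actually delivers $T<\I$; the virial plays no role on this side.
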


\section{Global dynamics}

Now we investigate some basic topological properties of the scattering and blow-up sets. The minimized energy by the Lorentz transform is given by 
\EQ{
 E_m(\fy) = \sqrt{|E(\fy)^2-|P(\fy)|^2|}\sign( E(\fy)^{2} - P(\fy)^{2} ).}
For any $\e\ge 0$, we define 
\EQ{
 \pt \HL^{<\e} = \{\fy \in L^2\mid E_m(\fy)<J(Q)+\e^2\},
 \pr \HL^{=\e} = \{\fy \in L^2\mid E_m(\fy)=J(Q)+\e^2\},
 \pr \HL^{\le\e} = \{\fy \in L^2\mid E_m(\fy)\le J(Q)+\e^2\}.}
For $\s=\pm$ and any $*$, we define 
\EQ{
 \pt \S_\s^* = \{ u(0) \in \HL^* \mid u(t) \text{ scatters as $\s t\to\I$}\},
 \pr \B_\s^* = \{ u(0) \in \HL^* \mid u(t) \text{ blows up in $\s t>0$}\},
 \pr \T_\s^* = \HL^* \setminus(\S_\s^* \cup \B_\s^*).}
The definition of the ``trapped set" $\T_\s^*$ appears rather awkward. It follows from the preceding that any solution in $\T_+^{<\e}$ is forward global, and after the energy-minimizing Lorentz transform, it stays close to $\{\pm Q(x-c)\}_{c\in\R^3}$ within distance $2\e$ for large $t$, provided that $\e\le\e_*$. 

It is obvious that $\S_\s\cap\B_\s=\emptyset$ and $X_-^* = \{\bar{\fy} \mid \fy \in X_+^*\}$ (where $X$ stands for any of the three types of sets). The scattering theory implies that $\S_\s^{<\e}$ is open for any $\e\ge 0$. 

To see that $\B_\e^{<\e}$ is open for $\e\le\e_*$, let $u$ be a solution on $[0,T_*)$ with $P(u)=0$ blowing up at $T_*$, and we proceed as in the radial case. The local Cauchy theory implies $\| u(t)\|_{L^2}\gec|T^*-t|^{-1/2}$, and so from the identity
\EQ{
 \p_t^2\|u_1(t)\|_{L^2_x}^2=2[\|u_2(t)\|_{L^2_x}^2-K_0(u_1(t))] \ge 6\|u_2\|_{L^2_x}^2 + 2\|u_1\|_{H^1_x}^2 - 8E(u),}
we deduce that $\p_t\|u_1\|_{L^2_x}^2=2\LR{u_1|u_2}\to\I$ and $K_0(u_1)\to-\I$ as $t\to T_*-0$. In other words, for any $M\in(0,\I)$ there exists $T\in(0,T_*)$ such that $\LR{u_1|u_2}\ge M$ and $K_0(u_1)\le-M$ for $T\le t<T_*$. 
Let $v$ be a solution with $ v(T)$ close to $ u(T)$ in $\HL$, and let $w$ be its Lorentz transform with $P(w)=0$. Since $P(v)\simeq P(u)=0$, the transform is close to the identity, and so by Lemma \ref{loc Lorentz}, $w$ is a local solution around $t=T$, and moreover ${w}(T)$ is close to $ u(T)$ in $\HL$. In particular $\LR{w_1|w_2}(T)>M/2$ and $K_0(w_1(T))<-M/2$. By the above identity, $\LR{w_1|w_2}$ is increasing as long as $K_0(w_1)<0$, and the latter can be changed only if $d_Q( w)\le\de_*$, which is impossible if $\LR{w_1|w_2}>M/2\gg 1$. Hence $\LR{w_1|w_2}$ is increasing, as well as $\|w_1\|_{L^2}$, and so $w$ blows up in $t>T$ by Payne-Sattinger's argument. Then Lemma \ref{global Lorentz} implies that $v$ also blows up in $t>T$, which means that $\B_+^{<\e}$ is open in $\HL$. Hence $\T_+^{\le\e}$ is closed. 

We already know that the $9$ intersections of $(\S_+^{<\e},\B_+^{<\e},\T_+^{<\e})$ by $(\S_-^{<\e},\B_-^{<\e},\T_-^{<\e})$ are all non-empty containing infinitely many radial solutions, for any $\e>0$. 
The construction in \cite{NLKGrad} works even if we choose a nontrivial dispersive component $\|\ga(0)\|_{L^2}\ll\e$. In this fashion, we easily obtain nonradial elements in each set (besides those generated by the invariant transforms). 

$\S_+^{<\e}$, $\S_+^{=\e}$ and $\S_+^{\le\e}$ are connected for any $\e\ge 0$, by the same proof as in the radial case in \cite{NLKGrad}. 

\section{Center-stable/unstable manifolds}
\label{sec:CS}
In the rest of paper, we prove that the solutions staying forever close to the manifold of the ground states scatter to the manifold, using the dispersive estimate for the linearized operator. 
Here we encounter more difference from the radial case, due to the parameter freedom. Indeed, the argument is more similar to the NLS case in \cite{NakS2}. 
\subsection{Equations}
First we need more detailed analysis of the Lorentz invariance. With any $p\in\R^3$, we associate the following Lorentz transform 
\EQ{
 \pt p=:s\th, \pq \th\in S^2, \pq s=\sinh\nu\ge 0, \pq c:=\sqrt{1+|p|^2}=\cosh\nu, 
 \pr \t:=\frac{p}{\sqrt{1+|p|^2}}=\th\tanh\nu, \pq x_\th:=\th(\th\cdot x), \pq x_\perp=x-x_\th, 
 \pr u_p(t,x):=u(ct - sx\cdot\th, c(x_\th - \t t)+x_\perp)
 =u(tc-p\cdot x,x+(c-1)x_\th-tp).}
In particular, the static solution $Q(x)$ is transformed into the traveling wave solution 
\EQ{
 Q(c(x_\th-\t t)+x_\perp)=Q(x-\t t+(c-1)(x-\t t)_\th)}
with the velocity $\t=p/\LR{p}$, for each fixed $p\in\R^3$. Its trajectory is in the 6-dimensional manifold in $H^1$ consisting of 
\EQ{
 Q(p,q)(x):=Q(c(x-q)_{\theta}+(x-q)_\perp),}
whose vector version is denoted by 
\EQ{\label{eq:VQdef}
  \VQ(p,q) := (\D+i\t\cdot\na) Q(p,q).}
The NLKG equation for traveling waves is transformed into the following equation of $\VQ(p,q)$:
\EQ{\label{eq:vecQeq}
 i\D\VQ +  \t\cdot\na \VQ = i\VQ_1^3}
  where $\VQ(p,q)_{1}^{3}=[\VQ(p,q)_{1}]^{3}= Q(p,q)^{3}$ (and similarly for other powers). 
Differentiating \eqref{eq:vecQeq} in $(p,q)$, we obtain the linearized equations 
\EQ{\label{eq:lineqs}
 \pt (i\D\LL+\t\cdot\na)\p_q\VQ=0, \pq \p_q\VQ=-\na\VQ, 
 \pr (i\D\LL+\t\cdot\na)\p_p\VQ=(\p_p\t)\cdot(\p_q\VQ),}
where the $\R$-linear self-adjoint operator $\LL$ now depends on the parameters:
\EQ{
 \LL=\LL(p,q) = 1 - 3\D^{-1}\VQ(p,q)_1^2\D^{-1}\Re.}
 Note that this agrees with our previous definition~\eqref{eq:LNdef} for $p=q=0$. 
Decomposing the solution $u$ in the form 
\EQ{
  u =  \VQ(p,q) +  v(x-q),}
we derive the equation for the perturbation $v$, i.e.,  
\EQ{
  v_t = (i\D\LL_ p+\t(p)\cdot\na) v  + (\dot q-\t(p))\cdot\na(\VQ_p+ v) - \dot p\cdot\p_p\VQ_p - iN_p(v_1),}
where $\VQ_p:=\VQ(p,0)$, $\LL_ p:=\LL(p,0)$ and $N_p(v_1):=3\VQ_{p1}v_1^2+v_1^3$. For brevity, let 
\EQ{
 \A_p := i\D\LL_ p + \t(p)\cdot\na, \pq \ga:=q-\int_0^t\t(p(s))\, ds, \quad \pi:=(p,\ga)}
then the equation can be  rewritten in the form 
\EQ{
 v_t = \A_p v  + \dot \ga\cdot\na(\VQ_p+v)-\dot p\cdot\p_p\VQ_p - iN_p(v_1).}
The natural orthogonality condition is 
\EQ{\label{eq:omorth2}
 0=\om( v,\p_q \VQ )=\om( v,\p_p \VQ ).}
 Here, and in what follows, it will be understood that all derivatives of $\VQ$ are to be evaluated at $q=0$. 
 In particular, \eqref{eq:VQdef} implies that 
\EQ{
 \p_{p}\VQ (0,0) (x) = i\nabla Q(x),\quad \p_{q}\VQ(0,0)(x)=-\na\VQ(0,0)(x)=-\na\D Q(x),}
which constitute the root modes for $i\D\LL$, see \eqref{eq:geneifunc}. 
By differentiation of \eqref{eq:omorth2}, we obtain the parameter evolution 
\EQ{ \label{mod eq}
 0\pt=\om( v_t,\p_\al \VQ )+\om( v,\p_\al\p_p \VQ )\dot p
 \pr=-\om( v,\A_p\p_\al \VQ)+\om(\dot\ga\na( \VQ + v)-\dot p\p_p \VQ ,\p_\al \VQ )
 \prQQ-\om(iN_p(v_1),\p_\al \VQ )+\om( v,\p_\al\p_p \VQ )\dot p,}
for $\al=q_1,q_{2},q_3,p_1,p_{2},p_3$. In view of~\eqref{eq:lineqs} and~\eqref{eq:omorth2} the first term of~\eqref{mod eq} vanishes, whence
\EQ{
 & \dot\ga[\om(\p_q \VQ ,\p_\al \VQ )-\om( v,\p_q\p_\al \VQ )]
  +\dot p[\om(\p_p \VQ ,\p_\al \VQ )-\om( v,\p_p\p_\al \VQ )] 
 \pr=-\om(iN_p(v_1),\p_\al \VQ ),}
which implies 
\EQ{
 |\dot \pi| \lec \|e^{-|x|/4}v_1\|_{2}^2,}
provided that the $6\times 6$ matrix $\om(\p_\al \VQ_{p},\p_\be \VQ_{p})$ is non-degenerate. Certainly it is at $p=0$, and therefore
remains so  for small $p$ by continuity. 

Now we look for a bounded global solution $v$ by a contraction argument, for a given initial data near $\VQ(0,0)$ in the center-stable direction. The contraction mapping $(v,\pi)\mapsto(\nx v,\nx \pi)$ is defined by 
\EQ{\label{eq:CSsys}
 \pt (\p_t-\A_p)\nx v - \nx\ga_t \cdot\na(\VQ_p+\nx v) + \nx p_t\cdot\p_p\VQ_p = - iN_p(v_1),
 \pr \p_t\om(\nx v,\p_\al\VQ_p)=0,\quad(\forall\al=p_1,p_2,p_3,q_1,q_2,q_3)}
with the initial constraint
\EQ{\label{eq:CSsysinit}
 0=\om(\nx v(0),\p_\al\VQ (0,0))\:\:(\forall\al),\;\;\;  \pq \pi(0)=\nx\pi(0)=0,\ \nx\ga(0)=0.}
 In \eqref{eq:CSsys}, \eqref{eq:CSsysinit}, the soliton~$\VQ(p,q)$ is to be evaluated at $q=0$ and $p(t)$ which is determined by 
 the {\em given} path $\pi(t)$.  
The orthogonality equation is equivalent to 
\EQ{ \label{eq pi}
 \pt\nx\ga_t \om(\p_q \VQ_p,\p_\al \VQ_p)+ \nx p_t \om(\p_p \VQ_p,\p_\al \VQ_p)
 \pr= \ga_t \om(\nx v,\p_q\p_\al \VQ_p) + p_t \om(\nx v,\p_p\p_\al \VQ_p) %- \nx\ga_t\om(\na\nx v,\p_\al\VQ)
 -\om(iN_p(v_1),\p_\al \VQ).}
We apply the symplectic decomposition at $\pi(0)=0$
\EQ{
 1=P_0+P_1=P_0+P_++P_-+P_c, \pq P_\pm\nx v=:\nx\la_\pm g_\pm, \; \pq P_c\nx v=\nx z,}
where $P_0$, $P_\pm$ and $P_c$ are projections onto the subspaces spanned respectively by $\{\nabla\D Q,i\na Q\}$, $g_\pm$, and the rest, cf.~\eqref{eq:geneifunc}.
The equation of $\nx v$ yields 
\EQ{\label{eq:keysys}
 \pt (\p_t\mp k)\nx\la_\pm=\om(a\cdot\na \nx v+f,\pm g_\mp), \pq a:=\t(p)+\nx\ga_t, 
 \pr (\p_t-\A_0)\nx z= P_c(a\cdot\na \nx v+ f), 
 \pr f:=-3i(\VQ_{p1}^2-\VQ_{01}^2)\nx v_1 + \nx\ga_t\cdot\na\VQ_p-\nx p_t\cdot\p_p\VQ_p-iN_p(v_1).}
 Note that $k, g_{\pm}$, $\A_{0}, P_{c}$ are all time-independent. 
For any given $\la_-(0)$, the unique bounded solution of $\nx\la_{\pm}$ is given by 
\EQ{ \label{eq la}
 \nx\la_+(t) &=  - \int_t^\I e^{k(t-s)}\om(f(s),g_-)\, ds, \\
  \pq \nx\la_-(t) &= e^{-kt}\la_-(0)-\int_0^te^{k(s-t)}\om(f(s),g_+)\, ds,}
while the equation for $\nx z$ is rewritten as 
\EQ{ \label{eq z}
 (\p_t-\A_0)\nx z = P_c(a\cdot\na\nx z+g), \pq g:=a\cdot\na P_0\nx v+f.}

\subsection{A priori bounds} \label{cst bd}
Inverting the $6\times 6$ matrix in \eqref{eq pi}, we obtain the estimate 
\EQ{
 \|\nx \pi_t\|_{(L^1\cap L^\I)_t}
 \lec \|\pi_t\|_{L^1\cap L^\I}\|\nx v\|_{L^\I L^2} %+ \|\nx\ga_t\|_{L^1\cap L^\I}\|\nx v\|_{L^\I L^2} 
 + \|N_p(v_1)\|_{(L^1\cap L^\I)_t(L^1+L^\I)_x}.}
Since $P_0(p(t))\nx v(t)=0$, we have, in any Sobolev space $X$ on $\R^3$, 
\EQ{
 \|P_0\nx v\|_X \lec \|p\|_{L^\I_t}\|\nx v\|_X \ll \|\nx v\|_X,}
and so 
\EQ{
 \|\nx v\|_X \lec \|P_1\nx v\|_X \lec |\la|\|g\|_X + \|\nx z\|_X.}
Applying Young's inequality  to \eqref{eq la} yields 
\EQ{
 \|\nx\la\|_{(L^2\cap L^\I)_t} \lec |\la_-(0)|+\|f\|_{(L^2\cap L^\I)_t(L^1+L^\I)_x}}
and  applying Proposition~\ref{prop:Stz ga} to \eqref{eq z} (see Section \ref{ss:defs in Sect 9} for the definition), 
\EQ{
 \pt \|\nx z\|_{L^\I_t L^2_x \cap L^2_t X} \lec \|z(0)\|_2 + \|g\|_{L^1_tL^2_x+L^2_t Y},
 \pr X:=B^{-5/6}_{6,2}\cap \D^{\nu/2}L^{2,-\s}, \pq Y:=B^{5/6}_{6/5,2}\cap \D^{-\nu/2}L^{2,\s},} 
for any $\nu,\s>0$ satisfying the condition in the proposition, 
under the assumption 
\EQ{
 \|a\|_{L^\I}\lec\|p\|_{L^\I} \| \nx v\|_{X}+\|\nx\ga_t\|_{L^\I}\lec \de\ll 1.} 
The nonlinear terms are estimated by H\"older's inequality as follows: 
\EQ{
 \pt\|N_p(v_1)\|_{(L^1\cap L^\I)_t(L^1+L^\I)_x} \lec \|v\|_{Stz}^2,
 \pr\|f-N_p(v_1)\|_{(L^2\cap L^\I)_t(L^1+L^\I)_x} 
  \lec \|p\|_{L^\I}\|\nx v\|_{Stz}+\|\nx\pi_t\|_{L^1\cap L^\I},
 \pr\|g\|_{L^1_tL^2_x+L^2Y} \lec \|a\|_{L^\I}\|\nx v\|_{Stz}+\|p\|_{L^\I}\|\nx v\|_{L^2_t\D^{\nu/2}L^{2,-\s}}
 \prQQ \qquad+\|\nx\pi_t\|_{L^1\cap L^\I}+\|v\|_{Stz}^2,}
where we neglected higher order terms. Under the assumption
\EQ{ \label{contraction}
 |\la_-(0)|+\|z(0)\|_2 \le \de\ll 1, \pq \|\pi_t\|_{L^1\cap L^\I}+\|v\|_{Stz}\le B\de,}
for some big fixed $B>1$ and sufficiently small $\de>0$, we therefore obtain 
\EQ{
 \pt\|\nx\pi_t\|_{L^1\cap L^\I} \lec (B\de)^2 \ll \de,
 \pr\|\nx v\|_{L^\I_t L^2_x \cap L^2_t X} \lec \de + (B\de)^2 \ll B\de,}
by a bootstrap argument. Moreover, those global bounds imply that, as $t\to\I$, $\nx\pi$ converges, $\nx\la_\pm\to 0$, and $\nx z(t,x-b(t))$ scatters, where 
\EQ{ \label{def b}
 b(t):=\int_0^t a(s)\, ds.}
Hence $P_0\nx v(t)\to 0$ strongly, and so $\nx v(t,x-b(t))$ also scatters, 
where $\dot b=a=\t(p)+\ti\ga_t\to\t(p_\I)$ converges, although we cannot generally 
approximate it by a (Lorentz transform of a) free solution, since $b(t)-t\t(p_\I)$ does not necessarily converge. 

\subsection{Difference estimate} \label{diff est}
The above argument does not apply to the difference of two solutions, 
due to the term $a\cdot\na\nx z$ in the $\nx z$ equation, which causes two problems: 
possible growth in time and derivative loss for the difference. Hence we employ a 
rather weak norm for the difference, with an exponential weight in $t$ and values in $H^{-1}_x$. 
This is still sufficient, because the main issue in the difference estimate is 
 to control $\la_+(0)$ in the contraction argument. To be more precise, fix $\ro>0$ such that 
\EQ{
 0<\de^{1/4} \ll \ro \ll \min(1,k),}
and define the function space $G$ on $(0,\I)$ by the norm 
\EQ{
 \|f\|_G:=\sup_{t>0}e^{-\ro t}|f(t)|.}
We estimate the difference of the mapping of two given  $(v^j,\pi^j)$ ($j=0,1$), denoted by $\diff X=X^1-X^0$, in the following norm 
\EQ{
 \|\diff{\nx\pi}_t\|_G + \|\diff{\nx\la}\|_G + \|\diff{\nx z}\|_{G_tH^{-1}_x}.}
The difference of \eqref{eq pi} yields
\EQ{
 \|\diff{\nx\pi}_t\|_G \pt\lec \de\|\|\diff{\nx v}\|_{H^{-1}}+|\diff{\pi}|+|\diff{\pi}_t|+|\diff{\nx\pi}_t|+\|\diff v\|_{H^{-1}} \|_G 
 \pr\lec \de\|\diff{\nx v}\|_{G_tH^{-1}_x}+\de\ro^{-1}\|\diff{\nx\pi}_t\|_G+\de\ro^{-2}\|\diff{\pi}_t\|_G+\de\ro^{-1}\|\diff v\|_{G_tH^{-1}_x}.}
The difference of \eqref{eq la} gives us via Young's inequality 
\EQ{
 \|\diff{\nx\la}\|_G \pt\lec |\diff\la_-(0)|+\frac{\de}{k}[\|\diff{\nx v}\|_{G_tH^{-1}_x}+\|\diff{\nx\pi}_t\|_G+\ro^{-1}\|\diff{\pi}_t\|_G+\|\diff v\|_{G_tH^{-1}_x}].}
For $\diff{z}$, we need a change of variables to avoid a derivative loss due to the transport term. Let $b(t)$ be as in \eqref{def b}, and  define 
\EQ{
 \t_b\fy(x):=\fy(x-b(t)), 
 \pq \z(t)=\t_b z(t), \pq \nx\z(t)=\t_b \nx z(t), \pq T_bA=\t_b A \t_b^{-1},}
where $A$ is any operator on $L^2(\R^3)$. Then \eqref{eq z} is transformed to 
\EQ{ \label{eq ze}	
 (\p_t-\A_0)\nx\z = [3i(T_bQ^2-Q^2)-T_bP_d a\cdot\na]\nx\z + \t_bP_cg.}
Now we employ the linearized energy of regularity level $H^{-1}$:
\EQ{
 \|\y\|_{E^{-1}}^2\pt:=\LR{\LL(i\D\LL)^{-1}P_c\y|(i\D\LL)^{-1}P_c\y}
 \simeq\|(i\D\LL)^{-1}P_c\y\|_2^2\simeq\|P_c\y\|_{H^{-1}}^2,
 \\ \p_t\|\y\|_{E^{-1}}^2&=2\LR{\LL(i\D\LL)^{-1}P_c(\p_t-i\D\LL)\y|(i\D\LL)^{-1}P_c\y}
 \pr\lec \|(\p_t-i\D\LL)P_c\y\|_{H^{-1}}\|\y\|_{E^{-1}}.}
From the difference of \eqref{eq ze} we infer that 
\EQ{
 \p_t\|\diff{\nx\z}\|_{E^{-1}}\lec\de[|\diff b|+|\diff\pi|+|\diff{\nx\pi_t}|+\|\diff{\nx\z}\|_{H^{-1}}+\|\diff v\|_{H^{-1}}],}
and integrating it, 
\EQ{
 \|\diff{\nx\z}\|_{G_tH^{-1}_x} \lec \|\diff z(0)\|_2+\de[\ro^{-2}\|\diff\pi_t\|_G+\ro^{-3}\|\diff{\nx\pi_t}\|_G+\ro^{-1}\|\diff v\|_{G_tH^{-1}_x}].}
The difference before the translation is bounded by 
\EQ{
 \|\diff{\nx z}\|_{H^{-1}_x}-\|\diff{\nx\z}\|_{H^{-1}_x}
 \pt\le\|(\t_{b^0}-\t_{b^1})z^0\|_{H^{-1}_x} \lec|\diff b|\|z^0\|_{L^2_x}
 \lec\de|\diff b|,}
and so 
\EQ{
 \|\diff{\nx z}\|_{G_tH^{-1}_x} \lec \|\diff{\nx\z}\|_{G_tH^{-1}_x}+\de[\ro^{-2}\|\diff\pi_t\|_G+\ro^{-3}\|\diff{\nx\pi_t}\|_G].}
Thus we obtain
\EQ{
 \pt\|\diff{\nx\pi}_t\|_G+\|\diff{\nx v}\|_{G_tH^{-1}_x} 
 \pn\lec |\diff{\la_-(0)}|+\|\diff{z(0)}\|_2 + \de\ro^{-3}\|\diff{\pi}_t\|_G+\de\ro^{-1}\|\diff{v}\|_{G_tH^{-1}_x}.}
In conclusion, there is a unique fixed point $(v,\pi)$ in the set \eqref{contraction}, where we have 
\EQ{
 \dot b = a = \t(p)+\dot\ga=\dot q.}
It depends continuously on the initial data $(\la_-(0),z(0))\in\R\times L^2_x$ in the above topology. 
The same estimate holds for the derivatives with respect to the data, which implies that $v(0)$ of the fixed point is smoothly parametrized by $(\la_-(0),z(0))\in \R\times P_c(L^2)$. 
We have already seen in Section~\ref{cst bd} that, as $t\to\I$, $\la(t)\to 0$, $\pi(t)\to\exists\pi_\I$, $\dot b(t)=a(t)=\dot q(t)\to\t(p_\I)$, and 
\EQ{
 v(t,x-b(t)+b(0)-q(0)) = u(t,x) - \VQ(p(t),q(t))(x)}
scatters. This means that, at least in a weaker or localized topology, the solution $u$ converges to the family of ground states. Moreover, we have 
\EQ{ \label{EP asy}
 \pt E(u)=E(\VQ_{p_\I})+\|v_\I\|_2^2/2 = J(Q)\LR{p_\I}+\|v_\I\|_2^2/2, 
 \pr P(u)=P(\VQ_{p_\I})+P(v_\I) = J(Q)p_\I+P(v_\I),}
where $v_\I:=\lim_{t\to\I}e^{-i\D t}[u(t)-\VQ(p(t),q(t))]$. 

Note that we started with a fixed parameter at $t=0$, normalized to $(0,0)$ by a Lorentz transform, and ended up with some non-zero but small parameter $\pi_\I$ at $t=\I$. It is in general difficult to reverse this process starting from $t=\I$ because of the growth in the modulation parameter, unless working with localized dispersive data, which would yield better asymptotic control on the parameter. 

\subsection{Uniqueness}
Next we prove that any solution which stays forever close to the family of ground states is necessarily a Lorentz transform of one of those 
constructed above. Thus,  let $u$ be a solution of NLKG satisfying 
\EQ{ \label{stay sol}
 \sup_{t\ge 0} \inf_{q\in\R^3}\|u(t)-\VQ(x-q)\|_2\lec\de.}
First notice that this property is preserved by Lorentz transforms which are  $O(\de)$ close to the identity, provided
 $\de>0$ is small enough. This is because the above condition implies that for some $R(\de)>0$ and at each $t>0$ there is a ball of radius $R$ in $\R^3$ such that the linear energy of $u$ is at most $O(\de^2)$ in the exterior of the ball. Then by the same argument as in Lemma~\ref{global Lorentz}, the solution extends at least to the exterior of the light cones emanating from this ball, with a uniform energy bound of $O(\de^2)$, while the interior energy can be controlled by the linear energy inequality for time $O(\de)$, whence the claim.  

Next, the implicit function theorem implies that there is a unique $(p,q)\in\R^6$ such that for $u_p(0,x-q)=\VQ+v(0,x)$, 
\EQ{
 |p|+\|v(0)\|_2 \lec \de, \pq 0=\om(v(0),\p_\al\VQ).}
Hence replacing $u$ by $u_p(t,x-q)$, we may assume, in addition to \eqref{stay sol}, that 
\EQ{
 u(0)=\VQ+v(0), \pq \om(v(0),\p_\al\VQ)=0.}
Let $\pi(t)=(p(t),\ga(t))$ and $u(t)=\VQ(p(t),q(t))+v(t,x-q(t))$ with $q(t)=\ga(t)+\int_0^t\t(p(s))\, ds$ where $\pi$ is evolved by \eqref{mod eq}, which is locally uniquely solvable as long as $|\pi(t)|+\|v(t)\|_2\ll 1$, preserving the orthogonality $\om(v(t),\p_\al\VQ(p(t),q(t)))=0$. Meanwhile, \eqref{stay sol} implies, via the implicit function theorem, that at each $t$ there is a unique $(p(t),q(t))\in\R^6$ of size $O(\de)$ such that the orthogonality property holds. This gives an a priori bound of
size $O(\de)$ on $\pi(t)$ solving \eqref{mod eq}, so it extends to all $t\ge 0$, satisfying  
\EQ{ \label{cond on mfd}
 \|\pi\|_{L^\I(0,\I)} + \|v\|_{L^\I_t(0,\I;L^2_x)} \lec \de, \pq 0=\om(v(t),\p_\al\VQ(p(t),q(t))).}
 
To establish  the global Strichartz bound, consider the local version for any $T>0$
\EQ{
 N_T:=\|\pi_t\|_{L^1_t(0,T)\cap L^\I_t(T,\I)} + \|v\|_{L^2_t(0,T;X)\cap L^\I_t H^1_x}.}
Since $(\pi,v)$ is bounded, we have \eqref{eq pi}, \eqref{eq la} and \eqref{eq z} with $(\nx\pi,\nx\la,\nx z,\nx v)=(\pi,\la,z,v)$. 
Then by the same argument as in Section~\ref{cst bd} together with~\eqref{cond on mfd}, we obtain 
\EQ{
 N_0 \lec \de, \pq N_T \lec \de+\de N_T + N_T^2,}
uniformly in $T>0$. Hence by the dominated convergence theorem, we conclude that $N_\I\lec\de$. 
Now that $(\pi,v)$ is a solution belonging to the set \eqref{contraction}, the uniqueness follows from the 
contraction principle via the difference estimates of~Section \ref{diff est}. 

\subsection{Threshold solutions}
Finally, we prove the Duyckaerts-Merle type classification of the solutions with the 
threshold energy $E(u)^2-|P(u)|^2=J(Q)^2$, that is, there are only three solutions modulo the symmetries. 

Let $u$ be a solution with $E(u)^2-|P(u)|^2=J(Q)^2$, satisfying \eqref{stay sol}. 
Then after a Lorentz transform, it is given by the above construction, and \eqref{EP asy} 
implies that $v_\I=0$ if $E(u)^2-|P(u)|^2=J(Q)^2$. Hence $z(t)=P_cv(t)\to 0$ strongly in $L^2_x$ as $t\to\I$, which 
allows us to apply the Strichartz estimate, Proposition~\ref{prop:Stz ga}, to $z$ starting from $t=\I$. Consider a weight function 
\EQ{
 \nu_T(t) = \min(e^{\mu(t-T)},1),}
for any fixed $\mu\in(k/2,k)$ and $T\to\I$. Then by the same argument as in Section~\ref{cst bd}, we obtain 
\EQ{
 \|\nu_T\pi_t\|_{L^1\cap L^\I} + \|\nu_T v\|_{L^\I_t H^1_x \cap L^2_t X} \lec e^{-\mu T}\de,}
where the right-hand side comes from $\|\nu_Te^{-\mu t}\la_-(0)\|_{L^1\cap L^\I}$. Taking $T\to\I$, we deduce that 
\EQ{ \label{exp dec}
 \|e^{\mu t}\pi_t\|_{L^1\cap L^\I} + \|e^{\mu t}v\|_{L^\I_t H^1_x \cap L^2_t X}\lec \de,}
which allows us to bound the difference of any two such solutions from $t=\I$. Let $(v^j,\pi^j)$ $(j=0,1)$ be two solutions enjoying the above exponential decay. Then by the same argument as in Section~\ref{diff est}, we obtain from the difference of \eqref{eq pi} 
\EQ{
 \|\diff\pi_t\|_{L^\I_t} \pt\lec \de\|e^{-\mu t}[\|\diff v\|_{H^{-1}_x}+|\diff\pi_t|+|\diff\pi|]\|_{L^\I_t}
 \pr\lec \de[\|\diff v\|_{L^\I_t H^{-1}_x}+\|\diff\pi_t\|_{L^\I_t}],}
where $\de$ comes from the norm \eqref{exp dec} for both $(v^j,\pi^j)$. Similarly, we obtain from the difference of~\eqref{eq la}
\EQ{
 \|\diff\la\|_{L^\I_t} \lec |\diff\la_-(0)|+\de[\|\diff\pi_t\|_{L^\I_t}+\|\diff v\|_{L^\I_t H^{-1}_x}].}
From the difference of \eqref{eq ze}, we have the energy inequality 
\EQ{
 -\p_t\|\diff\z\|_{E^{-1}}\lec \de e^{-\mu t}[|\diff b|+|\diff\pi|+|\diff\pi_t|+\|\diff\z\|_{H^{-1}_x}].}
Since $\|\z^j(t)\|_2\to 0$ as $t\to\I$, we can integrate the above from $t=\I$, which yields 
\EQ{
 \|\diff\z\|_{L^\I_t H^{-1}_x} \lec \de[\|\diff\pi_t\|_{L^\I_t}+\|\diff z\|_{L^\I_t H^{-1}_x}].}
The difference before the transport is estimated similarly 
\EQ{
 \|\diff z\|_{L^\I_t H^{-1}_x}- \|\diff\z\|_{L^\I_tH^{-1}_x} \lec \|e^{-\mu t}\diff b \|e^{\mu t}z^0\|_{L^2_x}\|_{L^\I_t} \lec \de\|\diff\pi_t\|_{L^\I_t}.}
Thus we obtain 
\EQ{
 \|\diff\pi_t\|_{L^\I_t} + \|\diff v\|_{L^\I_tH^{-1}_x} \lec |\diff\la_-(0)|,}
which implies that any solution satisfying \eqref{cond on mfd} and $E(u)^2-|P(u)|^2=J(Q)^2$ is uniquely determined by $\la_-(0)\in\R$ of $O(\de)$. Moreover, these conditions are invariant under the  forward time shift, and $\la_-(t)\to 0$ as $t\to\I$. 

Therefore, all such solutions are classified into three distinct cases: $\la_-(t)>0$ decreasing to $0$, $\la_-(t)\equiv 0$, and $\la_-(t)<0$ increasing to $0$. Moreover, solutions in each class are translations of the others in the same class. The solution with $\la_-(t)=0$ is the exact ground state $u=Q$, and applying the one-pass theorem backward in time, we deduce that the solutions with $\la_-(t)>0$ blow up in finite time $t<0$, while the solutions with $\la_-(t)<0$ scatter as $t\to-\I$.

\section{Linearized dispersive estimate}

This section is devoted to establishing the dispersive bounds needed in the construction of the center-stable manifold.
As in Beceanu's theorem~\cite{Bec2} on energy class (or better) center-stable manifolds for the cubic Schr\"odinger equation in~$\R^{3}$ for general data,
we are lead here to a linear equation involving a small perturbation in the form $ia(t)\cdot\nabla$, which reflects the translation invariance. 
A general approach covering this case was developed by Beceanu in~\cite{Bec1} for the Schr\"odinger case in~$\R^{3}$, and we follow his method in a wide sense.  However,  the technical details
here are quite different from those of~\cite{Bec1}. For a related, but much simpler, statement see also Appendix~B of~\cite{NakS2}. 

\subsection{Linear matrix operator} \label{ss:defs in Sect 9}
In order to use the Kato theory, we need to make our operator $\C$-linear. Define a matrix operator $\LH$ on $L^2(\R^3;\C^2)$ by 
\EQ{\label{eq:HHmatrix} 
 \LH = \mat{\D-\frac 32Q^2\D^{-1} & -\frac 32Q^2\D^{-1} \\ \frac 32Q^2\D^{-1} & -\D+\frac 32Q^2\D^{-1}},}
such that $i\LH$ is an extension of $i\D\LL$ in the sense that 
\EQ{
 i\D\LL\fy = \psi \iff i\LH\mat{\fy \\ \bar{\fy}} = \mat{\psi \\ \bar{\psi}}.}
In the same way, the scalar operators $i$ and $\D\LL$ are extended to $i\s_3$ and $\s_3\LH$. Here 
$\s_{3}=\mat{ 1&0\\ 0&-1}$ as usual.  The inner product and the symplectic form $\om$ are extended 
in the following fashion: 
\EQ{
 \LR{u,v}:=\frac 12\int_{\R^3}\sum_{j=1}^2u_j(x)\bar{v_j(x)}dx, \pq \Om(u,v):=\LR{\s_3\D^{-1}u,v}.}
For any vector $u\in L^2(\R^3;\C^2)$, we define scalar functions $u^\pm$ by 
\EQ{
 u^+:=\D^{-1}(u_1+u_2), \pq u^-:=u_1-u_2.}
Then we have 
\EQ{\label{eq:split efunc}
  \LH\fy=\psi \iff L_+\fy^+=\psi^-, \pq \fy^-=\psi^+,}
which enables us to determine the spectrum of $\LH$ from that of $L_+$, cf.~\cite{DS}:
\EQ{
 \s(i\LH)=i(-\I,-1] \cup i[1,\I) \cup \{-k,k\} \cup \{0\},}
and the complete set of (generalized) eigenspaces of the nonselfadjoint operator $i\LH$ is spanned by 
\EQ{
 i\LH\WV{g_\pm}=\pm k\WV{g_\pm}, \pq i\LH\WV{\na Q}=0, \pq i\LH\WV{RQ}=-\WV{\na Q},}
where 
\EQ{
 \WV{g_\pm}:=\frac{1}{\sqrt{2k}}\mat{\D\ro\pm ik\ro \\ \D\ro\mp ik\ro}, \pq \WV{RQ}:=\mat{i\na Q \\ -i\na Q}, \pq \WV{\na Q}:=\mat{\D\na Q \\ \D\na Q}.}
These functions enjoy the following orthogonality properties: 
\EQ{
 \pt 0=\Om(\WV{g_\pm},\WV{RQ})=\Om(\WV{g_\pm},\WV{\na Q}),
 \pr 1=\Om(\WV{g_+},\WV{g_-}), \pq \Om(\WV{\na_j Q},\WV{R_kQ})=\de_{j,k}\|\p_1Q\|_2^2=\de_{j,k}J(Q).}
Hence the projection $P=P_c$ onto the continuous spectrum is given by 
\EQ{
 \pt P_c=1-P_d, \pq P_d=P_0+P_++P_-, \pq P_\pm u:=\Om(u,\pm\WV{g_\mp})\WV{g_\pm},
 \pr P_0u:=\sum_{j=1}^3J(Q)^{-1}[\Om(u,\WV{R_jQ})\WV{\na_jQ}-\Om(u,\WV{\na_jQ})\WV{R_jQ}]. }
The symplectic orthogonality is rewritten in the scalar form 
\EQ{ 
 \pt 0=\Om(u,\WV{g_+})=\Om(u,\WV{g_-}) \iff 0=\LR{\ro,u^-}=\LR{\ro,u^+},
 \pr 0=\Om(u,\WV{\na Q})=\Om(u,\WV{RQ}) \iff 0=\LR{\na Q,u^-}=\LR{\na Q,u^+},}
where $\LR{\cdot,\cdot}$ denotes the standard inner product on $L^2(\R^3;\C)$. 

\subsection{Dispersive estimates for the scalar operator}
First we recall the weighted $L^2$ estimates for the resolvent and the propagator of the scalar Klein-Gordon $\LR{\na}$ with a potential. 
All the estimates in this subsection are classical, but we provide proofs for the reader's convenience. 

Let $B^s_{p,q}(\R^{n})$ denote the inhomogeneous Besov space based on $L^p(\R^n)$ for any $n\ge 1$, $s\in\R$ and $p,q\in[1,\I]$. For brevity, we use the standard notation
\EQ{
 H^s := B^s_{2,2}, \pq C^s := B^s_{\I,\I}.}
The homogeneous versions are denoted by $\dot B^s_{p,q}$, $\dot H^s$ and $\dot C^s$, respectively. For $s\in(0,1)$, we have the equivalent semi-norms by the difference (cf.~\cite{BL})
\EQ{
 \pt\|\fy\|_{\dot B^s_{p,q}} \simeq \|\sup_{|y|\le\si}\|\fy(x)-\fy(x-y)\|_{L^p}\|_{L^q(d\si/\si)}.}
The weighted $L^2$ space $L^{2,s}(\R^{n})$ is defined by the norm
\EQ{
 \|\fy\|_{L^{2,s}} = \|\LR{x}^s\fy\|_{L^2(\R^n)}}
for any $s\in\R$. Hence $L^{2,s}$ is the Fourier image of $H^s$. 

We start with the resolvent estimate on the free part $\LH_0=\si_3\D$,  which is  a version of the classical   limiting absorption principle~\cite{Agmon}. 

\begin{lem} In any dimension $d\ge 3$,  
\EQ{\label{limap}
 \sup_{z\not\in\R} \|(\LH_0-z)^{-1}\fy\|_{L^{2,-1}} \lec \|\fy\|_{L^{2,1}}.} 
\end{lem}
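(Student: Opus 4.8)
The plan is to reduce the matrix resolvent estimate to the classical scalar limiting absorption principle for $\D=\sqrt{1-\Delta}$. Write $\LH_0=\sigma_3\D$, so that $(\LH_0-z)^{-1}=\mathrm{diag}\big((\D-z)^{-1},(-\D-z)^{-1}\big)=\mathrm{diag}\big((\D-z)^{-1},-(\D+z)^{-1}\big)$ acting on $L^2(\R^d;\C^2)$. Hence it suffices to prove, uniformly in $z\notin\R$, the two scalar bounds $\|(\D-z)^{-1}\fy\|_{L^{2,-1}}\lec\|\fy\|_{L^{2,1}}$ and $\|(\D+z)^{-1}\fy\|_{L^{2,-1}}\lec\|\fy\|_{L^{2,1}}$; since $\D+z=-(\D-(-z))$ and $z\mapsto -z$ is a bijection of $\C\setminus\R$, these are the same statement, so the whole problem collapses to a single scalar estimate for $\D$.

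For the scalar estimate I would pass to the Fourier side, where $\D$ becomes multiplication by $\LR{\x}=\sqrt{1+|\x|^2}$, and note that $L^{2,\pm1}$ is the Fourier image of $H^{\pm1}$; thus the claim becomes
\EQ{
 \sup_{z\notin\R}\big\|(\LR{\x}-z)^{-1} g\big\|_{H^{-1}(\R^d_\x)} \lec \|g\|_{H^{1}(\R^d_\x)}.
}
Now change variables to $r=\LR{\x}\in(1,\I)$ together with the sphere $S^{d-1}$; the multiplier depends only on $r$, so after freezing the angular variables this reduces to a one-dimensional resolvent bound $\sup_{z\notin\R}\|(r-z)^{-1}h\|_{\dot H^{-1/2}_r}\lec\|h\|_{\dot H^{1/2}_r}$ for the multiplication operator by $r$ on a half-line, which is the standard trace/limiting absorption fact (the kernel $(r-z)^{-1}$ loses exactly half a derivative on each side, and its boundary values $\mathrm{p.v.}\,(r-r')^{-1}\pm i\pi\delta$ are bounded from $\dot H^{1/2}$ to $\dot H^{-1/2}$ by the boundedness of the Hilbert transform and of the restriction map). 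One then reassembles the angular integration, using that the Jacobian $|\x|^{d-2}\,d|\x| = r(r^2-1)^{(d-3)/2}\,dr$ and the weights $\LR{x}^{\pm1}$ translate into genuine $H^{\pm1}(\R^d)$ norms; the hypothesis $d\ge3$ guarantees the Jacobian is locally integrable near $r=1$ and that no endpoint pathology at $\x=0$ appears.

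Alternatively, and perhaps cleaner for the write-up, I would simply \emph{quote} Agmon's limiting absorption principle~\cite{Agmon} for $-\Delta$ on $\R^d$, $d\ge3$: $\sup_{\zeta\notin[0,\I)}\|(-\Delta-\zeta)^{-1}\fy\|_{L^{2,-1}}\lec\|\fy\|_{L^{2,1}}$, and deduce the $\D$ version by the algebraic identity $(\D-z)^{-1}=(\D+z)(\D^2-z^2)^{-1}=(\D+z)\big(-\Delta+(1-z^2)\big)^{-1}$. The factor $(\D+z)$ is a first-order operator, which is exactly compensated by the extra decay built into $L^{2,1}$ versus $L^{2,-1}$: one checks $\|\D\fy\|_{L^{2,-1}}\lec\|\fy\|_{H^1}\simeq\|\fy\|_{L^{2,1}}$ after the Fourier transform, while boundedness of $\big(-\Delta+(1-z^2)\big)^{-1}$ from $L^{2,1}$ to $L^{2,-1}$, uniformly for $z\notin\R$ hence $1-z^2$ ranging over $\C\setminus[1,\I)\supset\C\setminus[0,\I)$ shifted, is precisely Agmon's statement (the shift by the fixed constant $1$ only helps, moving the spectrum away from $0$). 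The main technical point to watch — and the one I expect to be the real obstacle — is the uniformity of the constant as $z$ approaches the continuous spectrum $(-\I,-1]\cup[1,\I)$ of $\LH_0$, i.e. as $1-z^2$ approaches $[0,\I)$ non-tangentially or tangentially; this is handled by the standard fact that Agmon's bound holds uniformly up to the boundary from both half-planes, so one just has to make sure the algebraic manipulations above do not reintroduce a $z$-dependent blow-up, which they do not since $|z|$ large is controlled by the $\D$ in the numerator and the $\D^{-2}$ gain in the resolvent.
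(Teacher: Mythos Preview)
Your diagonalization $(\LH_0-z)^{-1}=\mathrm{diag}\big((\D-z)^{-1},-(\D+z)^{-1}\big)$ and reduction to the scalar estimate for $\D$ is exactly right, and this is also how the paper begins. After that, however, both of your routes have real gaps.

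\medskip
\textbf{First approach (endpoint failure).} Your one-dimensional claim that multiplication by $(r-z)^{-1}$ is bounded $\dot H^{1/2}_r\to\dot H^{-1/2}_r$ uniformly in $z\notin\R$ is false at this critical exponent. Passing to the boundary value $z\to r_0\in\R$ produces $\mathrm{p.v.}\,\frac{1}{r-r_0}\mp i\pi\delta_{r_0}$ acting by \emph{multiplication}, not convolution. For the $\delta$-piece you would need the point evaluation $h\mapsto h(r_0)$ to be bounded on $\dot H^{1/2}(\R)$, i.e.\ $\dot H^{1/2}(\R)\hookrightarrow L^\infty$; this is the critical Sobolev case and fails. (Boundedness of the Hilbert transform is irrelevant here: that is a statement about \emph{convolution} with $\mathrm{p.v.}\,1/r$, not multiplication.) This is precisely why the paper works with the Besov refinement $\dot B^{1/2}_{2,1}$ in place of $\dot H^{1/2}$ on the Fourier side: $\dot B^{1/2}_{2,1}(\R)\hookrightarrow C^0\cap L^\infty$ does hold, and the paper proves the sharper bound
\[
 |\LR{(\D-z)^{-1}\fy|\psi}| \lec \|\hat\fy\|_{\dot H^1\cap\dot B^{1/2}_{2,1}}\|\hat\psi\|_{\dot H^1\cap\dot B^{1/2}_{2,1}}
\]
via a direct analysis of the singular integral $\int_0^\I f(r)/(\LR{r}-z)\,dr$ together with trace-type estimates in polar coordinates obtained by interpolation.

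\medskip
\textbf{Second approach (derivative not absorbed).} The factorization $(\D-z)^{-1}=(\D+z)\big(-\Delta-(z^2-1)\big)^{-1}$ is correct, and for $z\notin\R$ one has $z^2-1\notin[0,\I)$, so Agmon's bound applies to the Helmholtz resolvent. But your treatment of the factor $\D+z$ is not valid: the assertion ``$\|\D\fy\|_{L^{2,-1}}\lec\|\fy\|_{H^1}\simeq\|\fy\|_{L^{2,1}}$'' confuses the weighted space $L^{2,1}$ with the Sobolev space $H^1$; these are Fourier duals of each other, not equivalent. The correct issue is that after applying the Helmholtz resolvent you only know $\psi:=(-\Delta-\zeta)^{-1}\fy\in L^{2,-1}$, and you still need $\D\psi\in L^{2,-1}$ (a derivative-LAP statement, not the basic Agmon bound) and, for the $z\psi$ term with $|z|$ large, the high-energy decay $\|(-\Delta-\zeta)^{-1}\|_{L^{2,1}\to L^{2,-1}}\lec|\zeta|^{-1/2}$. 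Both ingredients are true and standard, but neither is what you wrote; as stated your argument does not close.
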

As the proof below shows, we can even place the homogeneous weight $|x|^s$ near $x=0$ for $s<1/2$, but the above weaker estimate is sufficient for our purposes. 
\begin{proof}
Since $\s_{3} \D$ is diagonal, it suffices to consider scalar operators. 
Let $x=r\th$ be the polar coordinate, $\chi\in C_0^\I(\R)$ radial, $\chi(x)=1$ on $1/2<|x|<2$, and $\chi(x)=0$ on $|x|<1/3$ or $|x|>3$. Define operators $S_j$ for $j\in\Z$ by
\EQ{
 S_j\fy(x) = \chi(|x|)\fy(2^jx).}
After such a cut-off, we may regard the Besov norms as being defined on $\R$. 

First we derive \eqref{limap} from the following three estimates: for any $p>4$, 
\EQ{\label{polar Holder}
 \pt \sup_{j\in\Z}\|S_jr^{d/2-1}\fy\|_{L^2_\th C^{1/2}_r} \lec \|\fy_r\|_{L^2_x}, 
 \pq \sup_{j\in\Z}\|S_jr^{d/2-1/2}\fy\|_{L^2_\th (\dot B^{1/p}_{p,1})_r} \lec \|\fy\|_{\dot B^{1/2}_{2,1}},}
and for any $p<\I$, 
\EQ{ \label{sing int}
 \sup_{z\in\C\setminus\R}\Bigl |\int_0^\I\frac{f(r)}{\LR{r}-z}\, dr\Bigr| \lec \sum_{k=1}^2 \Bigl[\|r^{-k}f\|_{L^1_r}+\sup_{j\in\Z}\|r^{1-k}S_j f\|_{(\dot B^{1/p}_{p,1})_r}\Bigr] .}
Indeed, we can apply the latter estimate to 
\EQ{
 |\LR{(\D-z)^{-1}\fy|\psi}|\pt=\Bigl|\int_0^\I\int_S\frac{\hat{\fy}(r\th)\bar{\hat{\psi}}(r\th)}{\LR{r}-z}r^{d-1}\, d\th dr \Bigr|,}
by setting $f(r):=\int_S \hat{\fy}(r\th)\bar{\hat{\psi}}(r\th)r^{d-1}\, d\th$. The $L^1$ part is bounded by Cauchy-Schwarz and Hardy's inequalities: 
\EQ{ \| r^{-k} f\|_{L^{1}_{r}} \lec \| |\xi|^{-\frac{k}{2}} \hat\fy \|_{2}     \| |\xi|^{-\frac{k}{2}} \hat\psi \|_{2}
 \lec \|\hat\fy\|_{\dot H^{k/2}}\|\hat\psi\|_{\dot H^{k/2}}\lec \|\fy\|_{L^{2,k/2}}\|\psi\|_{L^{2,k/2}},}
 since $d\ge3$. Since $\dot B^{1/p}_{p,1}$ is an algebra, the Besov part is bounded by 
\EQ{
 \sum_{|j-h|\le 1}\|S_hr^{(d-k)/2}\hat\fy\|_{L^2_\th \dot B^{1/p}_{p,1} }\|S_hr^{(d-k)/2}\hat\psi\|_{L^2_\th \dot B^{1/p}_{p,1} },}
Therefore, using \eqref{polar Holder} and $C^{1/2}\subset\dot B^{1/p}_{p,1}$, we obtain 
\EQ{ \label{free resol est sharp}
 |\LR{(\D-z)^{-1}\fy|\psi}|  \lec \|\hat\fy\|_{\dot H^1\cap\dot B^{1/2}_{2,1}}\|\hat\psi\|_{\dot H^1\cap\dot B^{1/2}_{2,1}}
 \lec \|\fy\|_{L^{2,1}} \|\psi\|_{L^{2,1}},}
which implies \eqref{limap}. 

To prove \eqref{polar Holder}, one verifies that for  any $r_2>r_1>0$ 
\EQ{
 |[\fy(r\th)]_{r_1}^{r_2}|=\bigl|\int_{r_1}^{r_2}\fy_r(r\th)\, dr\bigr|
 \lec(1-r_1/r_2)^{1/2}r_1^{1-d/2}\|\fy_r(r\th)\|_{L^2_r},}
by Cauchy-Schwarz. Square integrating in $\th$, we obtain the first estimate 
\EQ{\label{eq:diff Best}
 \|S_j r^{d/2-1}\fy\|_{ L^2_\th C^{1/2}_r} \lec \|\fy_r\|_{L^2_x} \lec \|\fy\|_{\dot H^1}.}
The complex interpolation between \eqref{eq:diff Best} and the trivial estimate 
\EQ{ \label{triv est}
 \|S_j r^{d/2}\fy\|_{L^2_\th L^2_r} \lec \|\fy\|_{L^2_x},}
yields a uniform bound for $j\in\Z$ and $\al\in[0,1]$: 
\EQ{ \label{comp int}
 S_j r^{d/2-1+\al}: \dot H^{1-\al}_x \to L^2_\th(B^{(1-\al)/2}_{2/\al,2/\al})_r.}
Combining \eqref{triv est} with the Sobolev embedding, we have another bound 
\EQ{ \label{sob est}
 S_j r^{d/2}: L^2_x \to L^2_\th L^2_r \subset L^2_\th(B^{(\al-1)/2}_{2/\al,2})_r.}
Then the real interpolation between \eqref{comp int} and \eqref{sob est} gives us for $0<\be<1$, 
\EQ{
 S_j r^{d/2-(1-\al)\be}: (\dot B^{(1-\al)\be}_{2,1})_x \to L^2_\th(B^{(\be-1/2)(1-\al)}_{2/\al,1})_r.}
Choosing $0<\al<1/2$ and $\be(1-\al)=1/2$, we obtain  the second estimate of \eqref{polar Holder}. 

To prove \eqref{sing int}, let $z=\t+i\e$ with $\t,\e\in\R$ and $\e\not=0$. If $\t\le 1$, then on $r>0$, 
\EQ{
 |(\LR{r}-z)^{-1}| \le (\LR{r}-1)^{-1} = \LR{r}/r^2,}
 which gives rise to the $L^1$ part of \eqref{sing int}. If $\t>0$, then there is a unique $s>0$ such that $\LR{s}=\t$ and the integral has a singularity at $r=s$. But it is easy to see 
\EQ{
 \CAS{ r<s/2,\ 3s/2<r &\implies |(\LR{r}-z)^{-1}| \lec \LR{r}/r^2,\\
 s/2<r<3s/2 &\implies |(\LR{r}-z)^{-1}-(s\LR{s}^{-1}(r-s)+i\e)^{-1}| \lec \LR{s}/s^2.}}
Hence it suffices to bound 
\EQ{ \label{sing part}
 \int_{|\si|<s/2}\frac{f(s+\si)dr}{\si+i\e} = \int_{0}^{s/2}\left[\frac{f(s+\si)}{\si+i\e}+\frac{f(s-\si)}{-\si+i\e}\right]\, dt,}
where the integrand on the right equals 
\EQ{\label{eq:frac split}
 \frac{f(s+\si)-f(s-\si)}{\si+i\e} + \frac{-2i\e}{\si^2+\e^2}f(s-\si).}
To bound the first term, we employ an integral identity for any function $F(t)$
\EQ{
 \int_0^s\int_{\si/2}^\si F(t) dt\frac{2d\si}{(\si+i\e)^2} = \int_0^s\left[\frac{1}{t+i\e}-\frac{i\e}{(t+i\e)(2t+i\e)}\right]F(t) \, dt,}
which follows simply from Fubini. Hence \eqref{sing int} equals to 
\EQ{ \label{diff form}
 \int_0^{s/2}\int_{\si/2}^{\si}[f(s+t)-f(s+t-3\si/2)]\, dt \frac{2d\si}{(\si+i\e)^2} + R,}
where the remainder $R$ is bounded by
\EQ{
 |R|\lec \int_0^{s/2}\frac{\e}{\si^2+\e^2}[|f(s+\si)|+|f(s-\si)|]\, d\si
 \lec \|f\|_\I \lec \|f\|_{\dot B^{1/p}_{p,1}},}
while the leading term in \eqref{diff form} is bounded by using H\"older in $t$ and then by the difference norm of the Besov space 
\EQ{
 \lec\int_0^{s/2}\si^{-1/p}\|f(t)-f(t-3\si/2)\|_{L^p_t}\, \frac{d\si}{\si}
 \lec \|f\|_{\dot B^{1/p}_{p,1}}.}
This finishes the proof of \eqref{sing int}.
\end{proof}

Next, we transfer the estimate \eqref{limap} to the perturbed operator $\LH$.  

\begin{cor} \label{cor:limap}  
The operator  $\LH$ from~\eqref{eq:HHmatrix} satisfies 
\EQ{ \label{resol H}
 \sup_{z\not\in\s(\LH)}\|(\LH-z)^{-1}P_c\fy\|_{L^{2,-1}} \lec \|\fy\|_{L^{2,1}}}
\end{cor}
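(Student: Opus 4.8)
The plan is to derive the weighted resolvent bound \eqref{resol H} from the free limiting absorption principle \eqref{limap} by a Birman--Schwinger/analytic Fredholm argument, using the spectral projection $P_c$ to cancel the poles of $(\LH-z)^{-1}$ at the discrete spectrum $\{0,\pm ik\}$. First I would set up the perturbation. Write $\LH=\LH_0+V$ with $\LH_0=\si_3\D$, for which \eqref{limap} holds, and
\[
 V=\mat{-\frac 32 Q^2\D^{-1} & -\frac 32 Q^2\D^{-1} \\ \frac 32 Q^2\D^{-1} & \frac 32 Q^2\D^{-1}},
\]
i.e.\ $V$ is multiplication by a matrix with entries $\pm\frac 32 Q^2$ followed by $\D^{-1}$. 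Since $Q$ decays exponentially one has $\LR{x}^N Q^2\in L^\I$ for every $N$, and since $\D^{-1}$ is bounded on $L^{2,-1}$, the map $V\colon L^{2,-1}\to L^{2,1}$ is bounded; being the composition of the smoothing operator $\D^{-1}$ with multiplication by a rapidly decaying function, it is in fact compact. The same exponential localization shows that $P_d$ --- whose range is spanned by $\WV{g_\pm},\WV{\na Q},\WV{RQ}$ and which acts by $\Om$-pairing against these functions --- maps $L^{2,-1}$ into $L^{2,1}$, so $P_c=1-P_d$ is bounded on $L^{2,\pm1}$. From \eqref{limap} one has $\|(\LH_0-z)^{-1}\|_{L^{2,1}\to L^{2,-1}}\lec1$ for $z\notin\R$; in addition the boundary values $(\LH_0-(t\pm i0))^{-1}$ exist and are continuous in $t$ in this operator norm (the classical limiting absorption principle for the constant-coefficient operator $\D$, which comes out of the proof of \eqref{limap} with the usual modification at the thresholds $t=\pm1$).

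Next I would remove neighborhoods of the discrete spectrum. Because $\LH$ commutes with $P_c$ and $P_{z_0}P_c=0$ for each $z_0\in\{0,\pm ik\}$ (the $P_0,P_\pm$ summing to $P_d$ are the spectral projections at these eigenvalues), every term of the Laurent principal part of $(\LH-z)^{-1}$ at $z_0$ is annihilated by $P_c$ on the right. Hence $(\LH-z)^{-1}P_c$ continues holomorphically across each $z_0$, and is therefore bounded, uniformly for $z$ in a fixed neighborhood of $\{0,\pm ik\}$, as a map $L^2\to L^2$ and a fortiori $L^{2,1}\to L^{2,-1}$. It thus suffices to bound $(\LH-z)^{-1}P_c$ for $z$ in $\Om_\de:=\{z\notin\s(\LH)\colon \operatorname{dist}(z,\{0,\pm ik\})\ge\de\}$ with $\de>0$ fixed and small. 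On $\Om_\de$ one has $(\LH-z)^{-1}=(1+(\LH_0-z)^{-1}V)^{-1}(\LH_0-z)^{-1}$ (valid since $\s(\LH_0)\subset\s(\LH)$), so together with the bound on $(\LH_0-z)^{-1}$ and the boundedness of $P_c$ the whole statement reduces to the uniform invertibility of $1+(\LH_0-z)^{-1}V$ on $L^{2,-1}$, with inverse bounded uniformly in $z\in\Om_\de$.

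That uniform invertibility is the main obstacle. Set $T(z):=(\LH_0-z)^{-1}V$; it is compact on $L^{2,-1}$ (compact $V$ followed by bounded $(\LH_0-z)^{-1}$), and by the limiting absorption principle above it depends continuously on $z$ up to the continuous spectrum $(-\I,-1]\cup[1,\I)$ from either side. For $z\in\Om_\de$ with $\Im z\ne0$ the operator $1+T(z)$ is invertible because $\LH-z$ is; for $z=t\pm i0$ on the continuous spectrum, a nonzero element of $\ker(1+T(t\pm i0))$ would give a solution $\fy\in L^{2,-1}$ of $\LH\fy=t\fy$, hence via \eqref{eq:split efunc} a solution of $L_+\fy^+=t^2\fy^+$ in $L^{2,-1}$ with $t^2\ge1$ --- an embedded eigenfunction or threshold resonance of $L_+$ at energy $\ge1$. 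The former is ruled out by Kato's theorem on the absence of positive eigenvalues of $-\De-3Q^2$ together with a Rellich-type uniqueness argument in the weighted space, the latter precisely by the gap property of $L_+$ (no threshold resonance at $1$). Thus $1+T(z)$ is invertible for all $z$ in the closure of $\Om_\de$ taken on both sides of the cut, and by the analytic Fredholm theorem with the continuity and compactness of $T(\cdot)$ its inverse is bounded uniformly on $\{z\in\bar{\Om_\de}\colon|z|\le M\}$ for each $M$. For $|z|\ge M$ with $M$ large, the smoothing factor $\D^{-1}$ in $V$ forces $\|T(z)\|_{L^{2,-1}\to L^{2,-1}}\le\frac 12$ (the free resolvent concentrates at frequencies $|\x|\sim|z|$, where $\D^{-1}$ gains $|z|^{-1}$), so a Neumann series gives $\|(1+T(z))^{-1}\|\le2$ there. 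Combining the three regimes --- near $\{0,\pm ik\}$, the bounded part of $\Om_\de$, and $|z|$ large --- proves \eqref{resol H}. The only input beyond soft Fredholm theory and \eqref{limap} is the exclusion of embedded eigenvalues and of a threshold resonance for $\LH$, i.e.\ the spectral properties of $L_+$ recalled in the introduction.
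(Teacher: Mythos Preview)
Your proof is correct and follows essentially the same compact-perturbation strategy as the paper: both transfer the free bound \eqref{limap} to $\LH$ via the Fredholm alternative, reducing matters to the absence of embedded eigenvalues and threshold resonances of $L_+$, which is exactly the spectral input the paper invokes. The paper packages this as a contradiction argument (extract a violating sequence $z_n,\psi_n$, pass to a weak limit $\psi\in L^{2,-1}$ with $\LH\psi=z\psi$, and rule it out via \eqref{eq:split efunc}), whereas you organize it as an explicit resolvent identity plus Fredholm theory for the compact family $T(z)=(\LH_0-z)^{-1}V$; these are two presentations of the same mechanism.

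One point to tighten: your large-$|z|$ step, where you claim $\|T(z)\|\le\tfrac12$ because ``the free resolvent concentrates at $|\x|\sim|z|$, where $\D^{-1}$ gains $|z|^{-1}$,'' is heuristically right but does not immediately go through in weighted norms, since frequency cutoffs do not commute with $\LR{x}^{\pm1}$. The paper handles this more robustly by showing $R_0(z)\to0$ \emph{strongly} as a map $L^{2,1}\to L^{2,-1}$ via the identity $R_0(z)=z^{-1}(-1+\LH_0R_0(z))$ on a dense set, which combined with the compactness of $V\colon L^{2,-1}\to L^{2,1}$ forces $\|R_0(z)V\|_{L^{2,-1}\to L^{2,-1}}\to0$; this cleanly repairs your argument.
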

\begin{proof}
Since $Q^2\D^{-1}$ is compact $L^{2,-1}\to L^{2,1}$ and $P:=P_{c}=1-P_{d}$ is bounded on $L^{2,-1}$ and $L^{2,1}$, the compact perturbation argument implies that the resolvent estimate~\eqref{resol H}
can be violated only if $\LH$ exhibits a resonance. To be more specific, 
let $R_*(z)=(\LH_*-z)^{-1}$, $\LH=\LH_0+\K$ and $X=L^{2,-1} $. Suppose \eqref{resol H} fails, then there are $z_n\in\s(\LH)^c$ and $\fy_n\in L^2$ such that 
\EQ{ \label{contrad}
 \|\fy_n\|_{X^*}\to 0, \pq \psi_n:=R(z_n)P_c\fy_n, \pq \|\psi_n\|_{X}=1,\ \psi_n\rightharpoonup \psi\IN{\weak{X}}.}
Applying the resolvent expansion, we have 
\EQ{
 \psi_n=R_0(z_n)P\fy_n+R_0(z_n)\K \psi_n.}
Since $P:X^*\to X^*$ bounded and $R_0(z):X^*\to X$ uniformly bounded, the first term on the right is vanishing. For the second term, one has 
$\K P\psi_n\to \K P\psi$ in $X^*$, because $\K:X\to X^*$ compact. If $|z_n|\to \I$, then we infer from 
\EQ{
 R_0(z_n)=-z_n^{-1}(\LH_0-z_n-\LH_0)R_0(z_n)=z_n^{-1}(-1+\LH_0R_0(z_n)),}
that $R_0(z_n)f\to 0$ in $X$ for any $f\in\S\subset X^*$ dense. This with the uniform boundedness implies that $R_0(z_n)\to 0$ strongly as operator $X^*\to X$. Then $\psi_n\to 0$ strongly in $X$, contradicting \eqref{contrad}. Hence $z_n$ is bounded, and so extracting a subsequence, we may assume that $z_n\to\exists z\in\C$. Let $f_n:=\psi_n-R_0(z_n)P\fy_n$. Then $f_n\to\psi$ weakly in $X\subset\S'$, and so in $\S'$ we have 
\EQ{
 (\LH_0-z_n)f_n=\K\psi_n\to(\LH_0-z)\psi=\K\psi.}
Hence $\LH\psi=z\psi$ in the distribution sense, whereas $\psi=P\psi\in X$, since $P:X\to X$ continuous. If $\psi=0$, then $\K\psi_n\to 0$ strongly in $X^*$ and so does $\psi_n\to 0$ in $X$, contradicting \eqref{contrad}. Finally, using~\eqref{eq:split efunc}, one obtains 
\[
L_{+}\psi^{+} = z\psi^{-},\quad \psi^{-}=z\psi^{+}
\]
which implies that $\psi^{+}\ne0$ and $L_{+}\psi^{+}=z^{2}\psi^{+}$. 
If $\psi^+\in L^2$ then $\psi\in L^2$ is also an eigenfunction of $\LH$, which is a contradiction. Hence $\psi^+\in X\setminus L^2$. It now follows from the theory of scalar Schr\"odinger operators that the only possibility would be that $z^{2}=1$ is a threshold resonance of~$L_{+}$. However, in our case this is known not to occur, see~\cite{DS}, \cite{CHS}. 
\end{proof}

By the Kato theory~\cite{Kato}, \eqref{resol H} implies the weighted $L^2$ estimates on the evolution. 

\begin{lem}
For any $\e>0$, we have 
\EQ{ \label{wL2 H}
 \|e^{i\LH t}P_c\fy\|_{L^2_t L^{2,-1}_{x}} & \lec \|\fy\|_{L^2_x}, \\
  \Big\|\int_0^t e^{i\LH(t-s)}P_cf(s)\, ds \Big\|_{L^2_t L^{2,-1}_x} & \lec \|f\|_{L^2_s L^{2,1}_x.}}
\end{lem}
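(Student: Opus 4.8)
The plan is to deduce the weighted $L^2$ (local smoothing / Kato smoothing) bounds in~\eqref{wL2 H} from the uniform resolvent estimate~\eqref{resol H} of Corollary~\ref{cor:limap}, in the standard way of Kato's theory of smoothing operators, adapted to the non-selfadjoint generator $i\LH$. First I would record that $i\LH$ generates a $C_0$-group on $L^2(\R^3;\C^2)$ which leaves the range of $P_c$ invariant, and that on that range the spectrum of $i\LH$ is purely continuous and contained in $i((-\I,-1]\cup[1,\I))$, so $e^{i\LH t}P_c$ is a uniformly bounded group on $P_c L^2$. The operator $A := \LR{x}^{-1}P_c$, mapping $L^2 \to L^2$, is bounded, and the content of~\eqref{resol H} is exactly that
\[
 \sup_{\mathrm{Re}\,\la\ne 0} \big\| A (i\LH - \la)^{-1} A^* \big\|_{L^2\to L^2} < \I,
\]
after writing $z = \la/i$ and noting $A^* = P_c \LR{x}^{-1}$ (using self-adjointness of $P_c$ with respect to the $\R$-bilinear pairing, which holds since $P_c$ is a spectral projection for the symplectically self-adjoint structure). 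This is the hypothesis of Kato's theorem characterizing $\LR{x}^{-1}$ as a smoothing operator for the group.

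Next I would invoke the Kato--Yajima type equivalence: for a group $e^{tL}$ with $L = i\LH$ on a Hilbert space $\HH$, and a closed operator $A$, the boundedness of $A R(\la) A^*$ uniformly off the spectrum is equivalent to the homogeneous smoothing estimate $\|A e^{tL}f\|_{L^2_t \HH} \lec \|f\|_\HH$. The clean way to see the direction we need is via the Fourier transform in $t$ (or the limiting absorption principle / Plancherel argument): formally $\widehat{A e^{tL}P_c f}(\tau)$ is, up to constants, the boundary value $A(L - i\tau\mp 0)^{-1}$ paired appropriately, so $\|A e^{tL}P_c f\|_{L^2_t\HH}^2$ is controlled by $\sup_\tau \|A R(i\tau \pm 0) A^*\| \cdot \|f\|_\HH^2$ by Plancherel and a duality/polarization argument; here one uses that the resolvent bound survives the limit $\mathrm{Re}\,\la \to 0$, which is part of the statement of~\eqref{resol H} (the sup is over $z\notin\s(\LH)$ and the bound is uniform, hence the boundary values exist in the weak operator topology on $X^*\to X$). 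This gives the first estimate in~\eqref{wL2 H}. For the inhomogeneous (Duhamel) estimate, the standard $TT^*$ / Christ--Kiselev-free argument applies: the operator $f \mapsto \int_0^t e^{i\LH(t-s)}P_c f(s)\,ds$ has, after conjugating by $A$, kernel $A e^{i\LH(t-s)}P_c A^*$ restricted to $s<t$; dropping the restriction $s<t$ (legitimate here because the relevant bilinear form factors through the full-line propagator, by the resolvent-based proof rather than a pointwise-in-time one) reduces it to the composition of the homogeneous estimate with its dual $\|\int e^{-i\LH s}P_c f(s)\,ds\|_\HH \lec \|A^{-1}f\|_{L^2_s\HH}$, i.e. $\|f\|_{L^2_s L^{2,1}}$. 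Composing the two yields the second bound.

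The main obstacle I anticipate is the non-selfadjointness of $i\LH$: Kato's smoothing theory in its textbook form is stated for self-adjoint generators, and here $\LH$ is only self-adjoint with respect to the indefinite pairing $\LR{\s_3\,\cdot\,,\cdot}$, equivalently $i\LH$ is symplectically antisymmetric. One must therefore either (i) run the Plancherel argument directly on the group $e^{i\LH t}P_c$ using that it is uniformly bounded (so $\int \|e^{i\LH t}P_c f\|^2$ makes sense only after inserting the weight $A$) together with the resolvent identity $\int_{\R} e^{-i\tau t} e^{i\LH t}P_c\,dt = $ (boundary resolvent), justified by the spectral theorem for the self-adjoint operator $\D\LL$ restricted to $P_c L^2$ which is similar to a self-adjoint operator via $(i\D\LL)$ having real ``squared'' spectrum, or (ii) appeal to the abstract extension of Kato's theory to generators of bounded groups whose resolvent satisfies a uniform limiting absorption bound — this is exactly the framework in Kato's original paper~\cite{Kato} and is what the sentence ``By the Kato theory~\cite{Kato}'' is pointing to. I would structure the proof so that the only analytic input is~\eqref{resol H} plus uniform boundedness of the group on $P_c L^2$, citing~\cite{Kato} for the abstract passage, and spend the bulk of the writeup verifying that $P_c L^2$ is invariant, that $e^{i\LH t}$ is uniformly bounded there (using the orthogonal-in-$\Om$ decomposition and that on $P_c$ the dynamics is spectrally equivalent to that of $\D\LL$ with real spectrum bounded away from $0$), and that $A^* = P_c\LR{x}^{-1}$ in the relevant pairing, so that~\eqref{resol H} is literally the Kato hypothesis.
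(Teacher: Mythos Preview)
Your proposal is correct and follows the same Kato-smoothing route as the paper: deduce both estimates from the uniform resolvent bound~\eqref{resol H} via a $TT^*$ argument and Plancherel in time. The paper's write-up is slightly more concrete than your sketch---for the homogeneous estimate it computes $\F(TT^*f)(\tau)$ explicitly and reduces it via the resolvent identity $R(\tau-i0)R(\sigma+i0)=\frac{R(\tau-i0)-R(\sigma+i0)}{\tau-\sigma-i0}$ to the $L^2$-boundedness of the Hilbert transform plus~\eqref{resol H}; for the retarded estimate it inserts $e^{-\e(t-s)}$ so that the restriction $s<t$ turns directly into $R(\tau-i\e)$ on the Fourier side (rather than ``dropping'' the restriction), then sends $\e\to 0$---so your concern about non-selfadjointness is handled by direct computation rather than an abstract appeal to~\cite{Kato}.
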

\begin{proof}
Define an operator $T$ by $T\fy:=\LR{x}^{-1}e^{it\LH}\fy$. Then 
\[
T^{*}f = \int_{-\I}^{\I}  e^{-is\LH}\LR{x}^{-1} f(s)\, ds. 
\]
Our goal is to show that $T:L^{2}_{x}\to L^{2}_{t,x}$ which is the same as $T T^{*}: L^{2}_{t,x}\to L^{2}_{t,x}$. 
Now 
\[
 (T T^{*} \, f)(t) = \int_{0}^{\I} \LR{x}^{-1}e^{i(t-s)\LH} \LR{x}^{-1}\, f(s)\, ds, \quad t>0
\]
where we first (and without loss of generality) consider positive times. Denoting the Fourier transform in time by $\F$, one checks that
\[
  (\F  T  T^{*} f)(\t) = \int_\R \LR{x}^{-1} R(\t-i0) R(\s+i0) \LR{x}^{-1}(\F f)(\s) \, d\s. 
\]
By the resolvent identity, the right-hand side is the same as 
\[
 \int_\R \LR{x}^{-1}\frac{R(\t-i0) - R(\s+i0)}{\t - \s - i0} \LR{x}^{-1}(\F f)(\s) \, d\s. 
\]
By Plancherel, the boundedness of the Hilbert transform on $L^{2}$, and Corollary~\ref{cor:limap}, we conclude
that $T T^{*}$ is bounded on $L^{2}_{t,x}$, which proves the first inequality in~\eqref{wL2 H}.

Similarly, the retarded estimate on $t>0$ follows by letting $\e\to+0$ in 
\EQ{\nonumber
 \Bigl\|\iint_{0<s<t} e^{i\LH(t-s)-\e(t-s)+it\t}Pf(s)\, dsdt\Bigr\|_{L^2_\t X} 
 & = \bigl\|\int_0^\I e^{is\t}R(\t-i\e)Pf(s)\, ds\bigr\|_{L^2_\t X} \\
 &\lec\bigl\|\int_0^\I e^{is\t}f(s)\, ds\bigr\|_{L^2_\t X^*}\simeq \|f\|_{L^2_{t>0}X^*}.}
The estimate on $t<0$ is obtained by reversing  the sign of $i\e$. 
\end{proof}

\subsection{Time-dependent, spatially-constant perturbation}
We now turn to the main goal of this section, i.e., proving the dispersive estimates
required in the proof of the center-stable manifold theorem. 
To be more specific, 
we consider a small time-dependent perturbation in the form 
\EQ{
 \ga_t = i\D\LL \ga + P_c[a(t)\cdot\na \ga + f], \pq \ga(0)=P\ga(0), \pq \|a\|_{L^\I_t}\ll 1,}
and follow Beceanu's approach~\cite{Bec1} in order to establish Strichartz estimates. 
By the time symmetry, we may restrict it to $t\ge 0$. 
Putting $u=(\ga,\bar{\ga})$ and $F=(f,\bar{f})$, it is rewritten in the matrix formulation 
\EQ{
 u_t = i\LH u + P_c[a(t)\cdot\na u + F], \pq u(0)=P_cu(0).}
Let $A(t)\fy:=a(t)\cdot\na\fy$. The equation can be extended to include the $P_d$ component 
\EQ{
 z_t \pt= i\LH P_c z - P_d z + A(t)P_cz + F, \pq z(0)=u(0),}
then $u:=P_cz$ satisfies the previous equation. The equation can be rewritten in the form 
\EQ{ \label{eq2 z}
 \pt z_t=[i\LH_0+V+A(t)]z+\ti F, 
 \pr V:=-i\LH_0P_d-P_d+i\K, \pq \ti F(t):=-A(t)P_d z(t)+F(t)}
 where $\LH=\LH_{0}+\K$. 
Let $U(t,s)$ be the evolution operator defined by the equation $u_t = A(t) u$, namely 
\EQ{
 U(t,s)\fy = \fy(x+b(t,s)), \pq b(t,s)=\int_s^t a(\t)d\t.}
Note that $\LH_0$ and $A(t)$ commute, so do their evolution operators. 
Regarding $\ti F$ as an input, we consider the Duhamel formula 
\EQ{ \label{eq3 z}
 \pt z = z_0+\int_0^t e^{(t-s)i\LH_0}U(t,s)[Vz(s)+\ti F(s)]\, ds, 
 \pq z_0:=e^{i\LH_0t}U(t,0)z(0).}
Now define space-time operators $T, T_{0}, T_{1}$ on $(t,x)\in[0,\I)\times\R^3$ by 
\EQ{
 \pt Tg = \int_0^t e^{(t-s)i\LH_0}U(t,s)V g(s)\, ds, 
 \pr T_0g = \int_0^t e^{(t-s)i\LH_0}Vg(s)\, ds, 
 \pq T_1g = \int_0^t e^{(t-s)(i\LH_0+V)}Vg(s)\, ds.}
Then we have 
\EQ{ \label{eq4 z}
 \pt (1-T)z = z_0 + \int_0^t e^{(t-s)i\LH_0}U(t,s)\ti F(s)\, ds,
 \pr (T_0-T)g = \int_0^t e^{(t-s)i\LH_0}(1-U(t,s))V g(s)\, ds}
as well as the following properties.

\begin{lem} \label{inv 1+T0}
$T_{0}, T_{1}$ are bounded on $L^2_t L^{2,-\s}_x$ for any $\s\ge 1$, where it holds 
$$(1-T_0) (1+T_1)=(1+T_1) (1-T_0)=1$$
\end{lem}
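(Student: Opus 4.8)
The plan is to treat the two boundedness claims by the same mechanism — a retarded weighted $L^2$ estimate for the relevant propagator, applied to the smoothing perturbation $V$ — and then to derive the two inverse identities from Duhamel's formula together with Fubini's theorem. Write $L:=i\LH_0+V$, so that
\[
 T_0g(t)=\int_0^t e^{(t-s)i\LH_0}\,Vg(s)\,ds, \pq T_1g(t)=\int_0^t e^{(t-s)L}\,Vg(s)\,ds .
\]
The first point I would record is that $V$ is strongly smoothing in the spatial weight: $P_d$ is a bounded finite--rank projection onto the span of the exponentially decaying functions $\WV{g_\pm},\WV{\na Q},\WV{RQ}$, so $P_d$ and $\LH_0P_d=\si_3\D P_d$ map $L^{2,-\tau}_x$ boundedly into $L^{2,\tau}_x$ for every $\tau\ge 0$, and $\K=\LH-\LH_0$ does likewise since it is built from the bounded operator $\D^{-1}$ and multiplication by the exponentially decaying $Q^2$. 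Hence $V$ maps $L^{2,-\tau}_x$ into $L^{2,\tau}_x$ boundedly for every $\tau\ge 0$; in particular $\|Vg(s)\|_{L^{2,1}_x}\lec\|g(s)\|_{L^{2,-\si}_x}$ for $\si\ge1$. Moreover $V$ is a compact perturbation of $i\LH_0$ on $L^2$.

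For $T_0$ I would invoke that $\LH_0=\si_3\D$ has no discrete spectrum, so that estimate \eqref{limap} and Kato's theory give the free analogue of the retarded bound in \eqref{wL2 H},
\[
 \Bigl\|\int_0^t e^{(t-s)i\LH_0}h(s)\,ds\Bigr\|_{L^2_tL^{2,-1}_x}\lec\|h\|_{L^2_sL^{2,1}_x}.
\]
Taking $h=Vg$ and using the weight gain yields $\|T_0g\|_{L^2_tL^{2,-1}_x}\lec\|g\|_{L^2_sL^{2,-\si}_x}$, and since $L^{2,-1}_x\hookrightarrow L^{2,-\si}_x$ for $\si\ge1$ this gives boundedness of $T_0$ on $L^2_tL^{2,-\si}_x$. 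For $T_1$ the same argument applies once one has the analogue of \eqref{resol H}, and hence of \eqref{wL2 H}, for $e^{tL}$. Here the definition of $V$ is essential: the modification $-i\LH_0P_d-P_d$ is inserted exactly so that $L=i\LH_0+V$ coincides with $i\LH$ on $P_cL^2$ while, on the six--dimensional range of $P_d$, the unstable eigenvalues $\pm k$ and the null mode of $i\LH$ have been replaced by a decaying dynamics; consequently $L$ has no spectrum in $\{\Re z>0\}$, and repeating the compact--perturbation and limiting--absorption argument of Corollary~\ref{cor:limap} (again excluding a singularity at $z^2=1$ via the gap property of $L_+$) gives \eqref{resol H} for $L$. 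Kato's theory then produces the retarded weighted $L^2$ estimate for $e^{tL}$, and $T_1$ is bounded on $L^2_tL^{2,-\si}_x$ just as $T_0$ was.

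For the inverse identities, set $U_0(t,s):=e^{(t-s)i\LH_0}$ and $U_1(t,s):=e^{(t-s)L}$. Differentiating $U_0(t,r)U_1(r,s)$ and $U_1(t,r)U_0(r,s)$ in $r$ and integrating over $r\in[s,t]$ gives the two variation--of--parameters formulas
\[
 U_1(t,s)=U_0(t,s)+\int_s^t U_0(t,r)\,V\,U_1(r,s)\,dr=U_0(t,s)+\int_s^t U_1(t,r)\,V\,U_0(r,s)\,dr .
\]
Substituting the first into $T_1g(t)=\int_0^t U_1(t,s)Vg(s)\,ds$ and interchanging the integrations over $\{0<s<r<t\}$ — legitimate since, by the boundedness just proved, the double space--time integrals converge absolutely as $L^2_tL^{2,-\si}_x$--valued integrals — yields $T_1=T_0+T_0T_1$, i.e. $(1-T_0)(1+T_1)=1$; substituting the second formula instead yields $T_1=T_0+T_1T_0$, i.e. $(1+T_1)(1-T_0)=1$.

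The main obstacle is the weighted resolvent bound \eqref{resol H} for the perturbed generator $L=i\LH_0+V$: one must verify that the finite--rank modification built into $V$, whose purpose is to neutralize the unstable and central discrete modes of $i\LH$, does not create an eigenvalue in the open right half--plane or an embedded eigenvalue or resonance on the continuous spectrum $i(-\I,-1]\cup i[1,\I)$ — and this is where the absence of an eigenvalue of $L_+$ in $(0,1]$ and of a threshold resonance at $1$ is used, exactly as in Corollary~\ref{cor:limap}. The operator--valued Fubini in the last step is routine once the operators are known to be bounded.
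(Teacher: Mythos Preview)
Your argument is correct, and the algebraic identities are obtained exactly as in the paper: Duhamel for the two propagators plus Fubini gives $T_1 = T_0 + T_0T_1 = T_0 + T_1T_0$ (the paper computes $T_0T_1$ directly rather than substituting into $T_1$, but this is only a cosmetic reordering).

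The one place where your route diverges from the paper's is the boundedness of $T_1$. You correctly observe that $L = i\LH_0 + V$ coincides with $i\LH$ on $P_cL^2$ and has been made contractive on the discrete subspace, but you then propose to re-establish a limiting absorption principle for $L$ by repeating the compact-perturbation argument of Corollary~\ref{cor:limap}, flagging the absence of new resonances as the ``main obstacle''. The paper bypasses this entirely: by construction $L = i\LH P_c - P_d$, which is block-diagonal for the splitting $P_cL^2\oplus P_dL^2$, so the propagator is simply $e^{tL} = e^{it\LH}P_c + e^{-t}P_d$. The bound on $T_1$ is then immediate from the already-proven retarded weighted estimate \eqref{wL2 H} for $e^{it\LH}P_c$ on the first piece and a trivial Young-in-$t$ bound for the exponentially decaying finite-rank second piece. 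In other words, the block structure you already identified hands you the propagator (equivalently, the resolvent) of $L$ directly from that of $\LH$, and no fresh spectral analysis is needed; the obstacle you flag does not actually arise.
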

\begin{proof}
Let $i\ti\LH=i\LH_0+V$, then 
\EQ{
 T_0 T_1f\pt=\iint_{0<t_1<t_0<t} e^{(t-t_0)i\LH_0}V e^{(t_0-t_1)i\ti\LH}V f(t_1)dt_1dt_0
 \pr=\int_0^t \int_0^{t-t_1}e^{(t-t_1-s)i\LH_0}Ve^{si\ti\LH}Vf(t_1)ds dt_1
 \pr=\int_0^t [e^{(t-t_1)i\ti\LH}-e^{(t-t_1)i\LH_0}]Vf(t_1)dt_1 = T_1f-T_0f,}
where in the third step we used the Duhamel formula
\EQ{
 e^{it\ti\LH}\fy = e^{it\LH_0}\fy + \int_0^t e^{(t-s)i\LH_0}Ve^{is\ti\LH}\fy ds.}
Since $i\LH_0=i\ti\LH- V$, we obtain $T_1 T_0=-T_0+T_1$ in the same way. 
The weighted $L^2$ estimate \eqref{wL2 H} for $\LH$ implies that if $\s\ge 1$ then 
\EQ{
 \|T_1f\|_{L^2_t L^{2,-\s}_x} \pt \le
 \|\int_0^t[e^{i\LH(t-s)}P_c Vf(s)+e^{-(t-s)} e^{-i(t-s)\LH_{0}}  P_d Vf(s)]\, ds\|_{L^2_t {L^{2,-1}}}
 \pr\lec \|P_cVf\|_{L^2_tL^{2,1}}+\|P_d Vf\|_{L^2_t {L^{2,-1}}}\lec \|f\|_{L^2_tL^{2,-\s}_x},}
since $V:L^{2,-\s} \to L^{2,1}$ bounded. The bound for $T_0$ is simpler, as it uses the free analogue of~\eqref{wL2 H}.
\end{proof}

Next,  we extend the weighted $L^2$ estimate to $T$ with some small loss of regularity. For that purpose, we introduce the bilinear form in dyadic frequency 
\EQ{
 I_j^M(f,g):=\int_0^\I\int_0^{\max(t-M,0)}\LR{e^{(t-s)i\D}U(t,s)\De_j wf(s)|wg(t)}\, dsdt,}
for $j\ge 0$, $M\ge 1$, $w(x):=\LR{x}^{-\s}$, $f(t,x),g(t,x)\in L^2_{t,x}$ and
 $1=\sum_{j=0}^\I\De_j$ is a Littlewood-Paley decomposition defined by $\De_j u = \F^{-1}\fy(2^{-j}\x)\hat u(\x)$ for all $j\ge 1$, 
 with some radial non-negative $\fy\in C_0^\I(\R^3)$ supported on $1/2<|\x|<2$. 

To sum $I_j$ over $j\ge 0$, we use the almost orthogonality 
\EQ{ \label{alm ort}
 \pt I_j^M(f,g)= I_j^M(Q_jf,Q_jg),
 \pq Q_jf=\sum_{|k-j|\le 1}\De_k f+w^{-1}[\De_k,w]f,}
where the commutator term is small in the sense 
\EQ{
 \|w^{-1}[\De_k,w]f\|_2 \lec %\|2^{3k}\hat\fy(2^kx)|x|\LR{x}^{\s-1}*f\|_2 \lec 
 2^{-k}\|f\|_2.}

For each $I_j^M$, we now have the following estimate.
\begin{lem} \label{est IjM}
Let $\s>3/2$ and  $M\ge 1$. Then for any $j\ge 0$, 
\EQ{
 |I_j^M(f,g)| \le C(\s)\,  2^{2j} M^{\frac32-\s}   \|f\|_{L^2_{t,x}}\|g\|_{L^2_{t,x}},}
provided that $\|a\|_{L^\I_t}\ll 1$. 
\end{lem}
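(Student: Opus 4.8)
The plan is to estimate $I_j^M$ by Schur's test in the time variables, reducing it to a bound on the $L^2_x\to L^2_x$ operator norm $\Phi_j(\tau):=\sup_{t-s=\tau}\|w\De_j e^{\tau i\D}U(t,s)w\|_{L^2_x\to L^2_x}$, and then to gain the factor $M^{3/2-\s}$ from the smallness of $w=\LR x^{-\s}$ on the light cone together with the dispersive decay of the frequency-localized Klein--Gordon propagator. Concretely, Cauchy--Schwarz in $x$ applied to the integrand of $I_j^M$, followed by Young's inequality in $(s,t)$ and the observation that the domain of integration forces $\tau=t-s\ge M$, gives $|I_j^M(f,g)|\le\big(\int_M^\I\Phi_j(\tau)\,d\tau\big)\|f\|_{L^2_{t,x}}\|g\|_{L^2_{t,x}}$, so it suffices to show $\int_M^\I\Phi_j(\tau)\,d\tau\lec C(\s)\,2^{2j}M^{3/2-\s}$.

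For $\Phi_j$ I would decompose both the source and the target dyadically in space, $1=\sum_{l\ge0}\theta_l$ with $\theta_l$ supported in $\{|x|\sim 2^l\}$ (and $\theta_0$ in $\{|x|\lec1\}$); on the $l$-th shell $w$ has size $2^{-l\s}$, so $\|\theta_l(wf)\|_{L^2}\lec 2^{-l\s}\|f\|_{L^2(|x|\sim2^l)}$, and similarly on the target side. The point is then a transport picture for $\De_j e^{\tau i\D}$: it propagates at speed at most $1$ up to rapidly decaying, non-stationary-phase tails, and because the group velocities $|\xi|/\LR\xi$ on $\supp\De_j$ are $\approx1$ when $j\ge1$ but fill out $[0,1/\sqrt2]$ when $j=0$, a frequency-$2^j$ datum supported in shell $2^l$ with $2^l\lec\tau$ is pushed, for $j\ge1$, into the thin annulus $\{|x|\sim\tau\}$ where $w\sim\tau^{-\s}$, whereas for $j=0$ it also spreads into the interior $\{|x|\lec\tau\}$, where one invokes the standard bound $\|e^{\tau i\D}\De_0\|_{L^1\to L^\I}\lec\LR\tau^{-3/2}$. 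Summing the shells (the sums converge because $\s>3/2$, the worst term being $l=0$) yields $\Phi_j(\tau)\lec C(\s)\,\LR\tau^{-\s}$ for $j\ge1$ and $\Phi_0(\tau)\lec C(\s)\big(\LR\tau^{-3/2}+\LR\tau^{-\s}\big)$. The translation $U(t,s)$ is harmless here: it commutes with $e^{\tau i\D}$, it is an $L^2$-isometry, and $|b(t,s)|=|\int_s^t a|\le\|a\|_{L^\I}\tau\ll\tau$, so it only displaces the light-cone picture by a small fraction of the cone radius; what one must not do is change variables so as to translate the weight, since $w(x-b)\lec\LR b^\s w(x)$ would cost the polynomially large factor $\LR{b(t,s)}^\s\sim(\|a\|_{L^\I}\tau)^\s$.

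Integrating, $\int_M^\I\LR\tau^{-\s}\,d\tau\lec M^{1-\s}\le M^{3/2-\s}\le 2^{2j}M^{3/2-\s}$ settles $j\ge1$ with room to spare. For $j=0$ the light-cone part is again $\lec M^{1-\s}$, while $\int_M^\I\LR\tau^{-3/2}\,d\tau\lec M^{-1/2}$, which already gives the claim as long as $\s\le2$. When $\s>2$ one cannot afford the origin contribution $\LR\tau^{-3/2}$ alone and must exploit that the low-frequency Klein--Gordon propagator oscillates like $e^{i\tau}$ (indeed $\D-1=\sqrt{1-\De}-1\approx-\De/2$ on $\supp\De_0$): Fourier transforming in $\tau$, the time-integral $\int_M^\I e^{i\tau(\D-\xi)}\De_0\,d\tau$ becomes a constant times $e^{iM(\D-\xi)}(\D-\xi-i0)^{-1}\De_0$, the boundary value of the resolvent of $\D$ at $z=\xi$, and the weighted limiting-absorption bounds \eqref{limap} and \eqref{resol H} and Corollary~\ref{cor:limap} — now for $\D$ rather than $\LH$ — together with the H\"older regularity of order $\s-3/2$ in $\xi$ that the weight $\LR x^{-\s}$ provides, convert, via integration by parts against the phase $e^{iM\xi}$, into the claimed gain $M^{3/2-\s}$.

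I expect the main obstacle to be precisely this last point for $\s>2$, $j=0$: establishing the $C^{\s-3/2}$ regularity of the $\LR x^{-\s}$-weighted, frequency-localized resolvent of $\D$ in the spectral parameter, and in particular controlling the threshold $z\to1$ (the low-frequency regime), where one must again use the absence of a threshold resonance, as in Corollary~\ref{cor:limap}. A secondary difficulty is carrying the spatially-constant, time-dependent translation $U(t,s)$ through this Fourier-in-time/resolvent step — the ``Beceanu-type'' device referred to at the start of the section — again using only $\|a\|_{L^\I}\ll1$.
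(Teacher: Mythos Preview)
Your reduction to the time–convolution bound $\int_M^\infty \Phi_j(\tau)\,d\tau$ is sound, and your heuristic $\Phi_j(\tau)\lec\tau^{-\s}$ for $j\ge1$ captures the right mechanism (light–cone transport plus smallness of the weight there). But the route diverges substantially from the paper's, and the $j=0$ detour you propose is not how the paper proceeds.

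The paper works directly at the kernel level. It first proves pointwise bounds for the frequency–localized propagator kernel $K^j(t,x)$ (the estimate \eqref{dec Kj}): away from the stationary–phase zone one has arbitrary polynomial decay $t^{-1}\langle|t-|x||+2^{-2j}t\rangle^{-N}$, and inside that zone $|K^j|\lec t^{-1}\langle t-|x|\rangle^{-1/2}$. The second, and crucial, device is a \emph{symmetrization}: one writes $I_j^M=J_j^M(f,g)+\overline{J_j^M(g,f)}$, where $J_j^M$ carries the extra restriction $|y|>|x|$ (source farther from the origin than target). After changing variables to $u=t-s$ and $z=x-y+b(t,s)$, this becomes $|y-b|>|y+z|$. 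On the near–light–cone region $|z|\simeq u$ one then has, since $|b|\le\|a\|_{L^\I}u\ll u$,
\[
 u\simeq|z|\le|y-b|+|y+z|+|b|\lec|y-b|,
\]
so the \emph{source weight} $\langle y-b\rangle^{-\s}$ already sits on the set $\{|y-b|\gec u\}$. One now uses only the crude kernel bound $2^{3j}|K^j(2^ju,2^jz)|\lec 2^{2j}u^{-1}$ there, applies Cauchy--Schwarz in $(y,z)$, and is left with the factor
\[
 \|\LR{y-b}^{-\s}\|_{L^2(|y-b|\gec u)}\,\|\LR{y+z}^{-\s}\|_{L^2_z}\lec u^{3/2-\s}.
\]
Integrating $2^{2j}u^{-1}\cdot u^{3/2-\s}=2^{2j}u^{1/2-\s}$ over $u>M$ gives $2^{2j}M^{3/2-\s}$; the off–light–cone piece is negligible by the rapid decay in \eqref{dec Kj}.

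Thus the paper extracts the exponent $3/2-\s$ \emph{not} from dispersion (your $\tau^{-3/2}$) but from the $L^2$ tail of the weight over $\{|x|\gec u\}$, once the symmetrization has pinned one of the two spatial variables there. Your operator–norm route, by contrast, leads you at $j=0$ to the pure dispersive bound $M^{-1/2}$ and then to a substantial resolvent/threshold analysis to upgrade it; neither that analysis nor the attendant difficulty of carrying $U(t,s)$ through a Fourier–in–$\tau$ step appears in the paper. The symmetrization $|y|>|x|$ is the ingredient your proposal is missing; with it, the argument is uniform in $j$ and in $\s>3/2$, and no separate low–frequency treatment is needed.
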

\begin{proof}
Define space-time functions $K_j$ and $K^j$ by 
\EQ{
 K_j(t,x)= 2^{3j}K^j(2^jt,2^jx) = e^{it\D}2^{3j}\hat\fy(2^jx).}
Then we can rewrite $I_j^M$ as follows
\begin{align}  \label{Ij int}
 I_j^M(f,g)  &= J_j^M(f,g)+\bar{J_j^M(g,f)} \\
  J_j^M(f,g) &= \int_{\substack{|y|>|x|, \\ s>0,\ t>s+M,}} K_j(t-s,x-y+b)w(y)f(s,y)w(x)\bar{g}(t,x)\, dsdtdxdy \nonumber 
 \\& =\int_{\substack{|y-b|>|y+z|, \\ s>0,\ u>M,}}2^{3j}K^j(2^ju,2^jz)w(y-b)f(s,y-b)
 \nonumber \prQQ\qquad \times w(y+z)\bar{g}(s+u,y+z)\, dsdudydz,\nonumber
 \end{align}
where $b=b(t,s)=\int_s^ta(\sigma)\, d\si$. Now we use the pointwise decay of $K^j$: for $t\gtrsim1$, 
\EQ{ \label{dec Kj}
 |K^j(t,x)| \lec \CAS{t^{-1}\LR{|t-|x||+2^{-2j}t}^{-N} &(|t-|x||\not\simeq 2^{-2j}t), \\
  t^{-1}\LR{t-|x|}^{-1/2} &(|t-|x||\simeq 2^{-2j}t),}}
where $N\in\N$ is arbitrary. Note that the first line is similar to the wave equation, while the second term is the stationary phase part due to the mass term.
The proof of~\eqref{dec Kj} will be given below.  In particular, \eqref{dec Kj} implies the uniform estimate
\EQ{
\label{dec Kj univ}
|K^{j}(t,x)|\lec 2^{j}t^{-\frac32}
}
whence 
\EQ{\label{easy Jbd}
|J_{j}^{M}(f,g)| &\lec 2^{\frac{5j}{2}} \int_{u>M} u^{-\frac32} \| w(x)f(s,x)\|_{L^{1}_{x}} \| w(y)g(s+u,y)\|_{L^{1}_{y}}\, ds du\\
&\lec 2^{\frac{5j}{2}} M^{-\frac12} \|f\|_{L^{2}_{t,x}} \|g\|_{L^{2}_{t,x}} 
}
This bound is not too useful for our purposes since we require more decay in~$M$. 
Using \eqref{dec Kj} and arguing as in~\eqref{easy Jbd}, the integral \eqref{Ij int} on $u\not\simeq|z|$ is bounded by
\EQ{
 \pt\int_{u>M} 2^{3j}(2^ju)^{-3-N}|w(y-b)f(s,y-b)w(y+z)g(s+u,y+z)|\, dzdydu ds\\
 &\lec 2^{-jN} M^{-2-N}\|f\|_{L^2_{t,x}}\|g\|_{L^2_{t,x}}.}
If $u\simeq|z|$, then  
\EQ{
 |b(t,s)| \ll |t-s|= u  \simeq |z| \le |y-b|+|y+z|+|b| \lec |y-b|}
 since $|y+z|<|y-b|$. 
The integral \eqref{Ij int} over this region is therefore bounded by, with $b=b(s,u)$,  
\EQ{\nonumber
 \pt\int_{\substack{u>M \\   |u|\lec |y-b|   }}   \frac{2^{3j}}{2^ju}  \frac{|f(s,y-b)g(s+u,y+z)|}{\LR{y-b}^{\s}\LR{y+z}^{\s}}  \, dydzduds
 \pr\lec    \int_{ u>M  }   2^{2j} u^{-1}  \|\LR{y-b}^{-\s}\LR{y+z}^{-\s}\|_{L^2_{[y:|y-b|\gec u]}L^2_z  }  \|f(s,\cdot)\|_{L^2_x}    \|g(s+u,\cdot)\|_{L^2_x}\, dsdu
 \pr \lec \int_{ u>M  }   2^{2j} u^{\frac12-\sigma}  \|f(s,\cdot)\|_{L^2_x}    \|g(s+u,\cdot)\|_{L^2_x}\, dsdu 
 \pr\lec 2^{2j} M^{\frac32-\s}    \|f\|_{L^2_{t,x}}   \|g\|_{L^2_{t,x}}}
 and the lemma is proved.  
\end{proof}

\begin{proof}[Proof of \eqref{dec Kj}]
From the explicit form of $\hat{\sigma_{S^{2}}}$, one has 
  (up to some nonzero multiplicative constants) 
\EQ{ \label{form Kj}
 K^j(t,r) = \int_{-1}^1 e^{it\LR{\ro}_j+icr\ro}\psi(\ro)\, d\ro dc
 \pn = r^{-1}\int e^{it\LR{\ro}_j}\sin(r\ro)\psi(\ro)\, d\ro,}
where $\LR{\ro}_j:=\sqrt{\ro^2+2^{-2j}}$ and $\psi\in C_0^\I(1/2,2)$. If $r<t/2$, then we use the first form. Let $\Phi_c(\ro):=t\LR{\ro}_j+cr\ro$, then 
\EQ{
 \Phi_c'=t\frac{\ro}{\LR{\ro}_j}+cr=O(t), \pq \Phi''_c=t\frac{2^{-2j}}{\LR{\ro}_j^3}=O(t2^{-2j})\gec|\Phi_c^{(1+k)}|,
}
for all $k\ge 0$. Integration by parts in $N$ times gives 
\EQ{
 \int e^{i\Phi_c}\psi(\ro)\, d\ro=\int e^{i\Phi_c}[i\p_\ro(\Phi_c')^{-1}]^N\psi(\ro)\,  d\ro,}
where the right-hand side is bounded by $t^{-N}\|\psi\|_{W^{N,\I}}$. 
It remains to estimate in the region $r>t/2$ and $|t-r|\gec 1$, for which we use the second form in \eqref{form Kj}. It can have stationary phase only at $\ro=\ro_0$ solving
\EQ{
 0 = \Phi_{-1}'(\ro_0) = t-r-\frac{2^{-2j}t}{\LR{\ro_0}_j(\LR{\ro_0}_j+\ro_0)},}
which has at most one solution in $\supp\psi$ because $\Phi_c''>0$. If $|t-r|\not\simeq 2^{-2j}t$, then it has no solution in $\supp\psi$ and 
\EQ{
 |\Phi_{-1}'| \simeq |t-r|+2^{-2j}t \gec |\Phi_{-1}^{(1+k)}|,}
for all $k\ge 0$. Hence by the same argument of non-stationary phase, we obtain the bound $t^{-1}(|t-r|+2^{-2j}t)^{-N}$ for any $N\ge 0$. Finally, if $|t-r|\simeq 2^{-2j}t$, then we integrate only once and in the region $|\ro-\ro_0|>\de\in(0,1)$, where $\Phi_{-1}''\simeq |t-r|$ and $\Phi_{-1}'\simeq |t-r|(\ro-\ro_0)$, so 
\EQ{
 |K^j(t,r)| \lec \de + \int_\de^1 \frac{|t-r|}{(|t-r|\ro)^2}d\ro \lec \de + (|t-r|\de)^{-1}.}
Choosing $\de=|t-r|^{-1/2}$, we obtain the claimed estimate in this region. 
\end{proof}

We can now state a key dispersive estimate. 

\begin{lem}
Let $1>\nu>0$ and $\s\ge \frac12+\frac{2}{\nu}$. If $\|a\|_{L^\I_t}\ll 1$, then  
\EQ{
 \|e^{it\LH_0}U(t,0)\fy\|_{L^2_tL^{2,-\s}_x} &\lec \|\fy\|_{H^{\nu/2}_x}, \\
\Big \|\int_0^t e^{i(t-s)\LH_0}U(t,s)f(s)\, ds \Big\|_{L^2_tL^{2,-\s}_x} &\lec \|\D^\nu f\|_{L^2_t L^{2,\s}_x+L^1_tH^{-\nu/2}_x}.}
\end{lem}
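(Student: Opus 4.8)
The plan is to assemble the estimate from the three ingredients already developed: the free weighted $L^2$ bound (the free analogue of~\eqref{wL2 H}), the pointwise kernel decay~\eqref{dec Kj}, and the bilinear estimate of Lemma~\ref{est IjM}. Since $\LH_{0}=\si_3\D$ is diagonal, $e^{it\LH_{0}}=\operatorname{diag}(e^{it\D},e^{-it\D})$, so everything reduces to the scalar Klein--Gordon propagator and its complex conjugate; moreover $e^{it\LH_{0}}$, $\D$ and the translations $U(t,s)$ all pairwise commute, and I shall use this freely. First I would dispose of the $L^1_tH^{-\nu/2}_x$ component of the inhomogeneous right-hand side: writing $\D^\nu f=h_1+h_2$ with $h_2\in L^1_tH^{-\nu/2}_x$, Minkowski's inequality in the $s$-variable reduces the contribution of $\D^{-\nu}h_2$ to the homogeneous estimate applied, at each fixed $s$, to the datum $\D^{-\nu}h_2(s)\in H^{\nu/2}$. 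It then suffices to prove the homogeneous estimate and the inhomogeneous estimate with datum in $\D^{-\nu}L^2_tL^{2,\s}_x$; by a $TT^*$ argument in the first case and by duality in the second, both follow once one shows that the operator
\[
 \mathcal{S}f(t)=\int_0^t \LR{x}^{-\s}\,e^{i(t-s)\LH_{0}}\,U(t,s)\,\D^{-\nu}\,\LR{x}^{-\s}f(s)\,ds
\]
(with $\D^{-\nu}$ commuted into the middle) is bounded on $L^2_{t,x}$.

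I would then split $\mathcal{S}=\mathcal{S}_{\mathrm{near}}+\mathcal{S}_{\mathrm{far}}$ according to whether $|t-s|$ lies below or above a threshold $M$, which is allowed to depend on the dyadic spatial frequency. In the near region $|t-s|<M$ I write $U(t,s)=1+(U(t,s)-1)$. For the ``$1$'' term, the full Duhamel integral $\int_0^t$ is controlled by the free weighted $L^2$ estimate (only $\s\ge1$ is needed for the weights, modulo routine commutators between $\LR{x}^{-\s}$ and $\D^{-\nu}$), while the piece it overcounts, namely $\int_{t-s>M}$ with $U=1$, is free and gets absorbed into the far analysis together with the $\D^{-\nu}$ gain. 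For the term $U(t,s)-1$, the translation $b(t,s)$ satisfies $|b(t,s)|\le\|a\|_{L^\infty}|t-s|$, which is small on the near region; using $|e^{ib\x}-1|\lec|b\x|^\nu$ together with $\D^{-\nu}$ to absorb the extra $|\x|^\nu$, one gains a small power of $\|a\|_{L^\infty}$ which, in tandem with the sharp near-light-cone part of~\eqref{dec Kj}, controls the cost of the time integration. In the far region $|t-s|>M$ I decompose $\D^{-\nu}=\sum_{j\ge0}\D^{-\nu}\De_j$; since $e^{it\LH_{0}}U(t,s)$ essentially preserves dyadic frequency supports, the $j$-th piece of $\mathcal{S}_{\mathrm{far}}$ is bounded on $L^2_{t,x}$, via the (diagonal) matrix version of Lemma~\ref{est IjM} and the almost-orthogonality~\eqref{alm ort}, by $C\,2^{-\nu j}\,2^{2j}\,M^{3/2-\s}$, the factor $2^{-\nu j}$ coming from $\D^{-\nu}\De_j$. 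Choosing $M=M_j$ as a fixed power of $2^j$ and invoking the hypothesis $\s\ge\tfrac12+\tfrac2\nu$, equivalently $\nu(\s-\tfrac32)\ge2-\nu$, turns $\sum_j 2^{-\nu j}2^{2j}M_j^{3/2-\s}$ into a convergent geometric series.

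The hard part will be exactly this balancing act: $M_j$ must be taken large enough relative to $2^j$ for the far series to converge, yet small enough that the near-diagonal contribution of $U(t,s)-1$, whose size is governed jointly by $|b|\lec\|a\|_{L^\infty}M_j$ and by the length $M_j$ of the time integration, remains summable in $j$. Making this work forces one to use the sharp near-light-cone bound in~\eqref{dec Kj} rather than the crude $L^1\to L^\infty$ dispersive decay, and it is precisely here, and nowhere else, that the quantitative relation between $\nu$ and $\s$ is consumed. Once the choice of $M_j$ is pinned down, the remaining steps — the $TT^*$ and duality reductions, the Minkowski estimate for the $L^1_tH^{-\nu/2}_x$ piece, the weighted-space mapping properties of $\LR{x}^{-\s}$ and $\D^{-\nu}$, and the almost-orthogonal reassembly of the Littlewood--Paley pieces — are routine.
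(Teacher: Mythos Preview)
Your overall architecture — reduce to a bilinear estimate, Littlewood--Paley decompose, split into near and far time interactions, use Lemma~\ref{est IjM} in the far region, and sum via the almost orthogonality~\eqref{alm ort} — is the paper's approach and is correct. The far-region analysis you describe is exactly right.

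The gap is in your treatment of the near region $|t-s|<M_j$. The paper does \emph{not} split $U=1+(U-1)$ there; it simply uses that $e^{i(t-s)\D}U(t,s)$ is unitary on $L^2_x$, so
\[
|I_j^0(f,g)-I_j^M(f,g)|\ \lec\ M\,\|f\|_{L^2_{t,x}}\|g\|_{L^2_{t,x}}
\]
by nothing more than Cauchy--Schwarz and Young in time. With $M=M_j=2^{\nu j}$ this gives $|I_j^0|\lec 2^{\nu j}$, and the almost orthogonality converts the $2^{\nu j}$ into an $H^\nu$ norm on one factor. No kernel bound, no smallness of $a$, no $(U-1)$ trick is needed here.

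Your proposed $(U-1)$ estimate does not close. Spending the full $\D^{-\nu}$ to absorb $|\xi|^\nu$ from $|e^{ib\xi}-1|\lec|b\xi|^\nu$ leaves $(U-1)\D^{-\nu}\De_j$ bounded in $L^2$ by $|b|^\nu\lec(\|a\|_\infty|t-s|)^\nu$, with no residual $2^{-\nu j}$ gain. Integrating over $|t-s|<M_j$ then costs $\|a\|_\infty^\nu M_j^{1+\nu}$, and since the far constraint forces $M_j\ge 2^{\nu j}$, this grows in~$j$ and is not controlled by almost orthogonality. Appealing to the near-light-cone part of~\eqref{dec Kj} does not rescue this: those pointwise bounds are effective for \emph{large} $|t-s|$ (indeed this is what drives Lemma~\ref{est IjM}), whereas here the integral is over short times where the kernel enjoys no useful decay beyond unitarity.

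Relatedly, your claim that the relation $\s\ge\tfrac12+\tfrac2\nu$ is consumed in the near region is misplaced. The choice $M_j=2^{\nu j}$ is dictated by the near region (so that the trivial near bound equals $2^{\nu j}$); the constraint on $\s$ then enters only in the far region, to ensure $2^{2j}M_j^{3/2-\s}\le 2^{\nu j}$.
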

\begin{proof}
The short-time interaction part is estimated simply by the $L^2_x$ conservation and integration in $t$: 
\EQ{
 |I_j^M(f,g)-I_j^0(f,g)| \lec M\|f\|_{L^2_{t,x}}\|g\|_{L^2_{t,x}},}
Hence for any $1>\nu>0$, we choose $M=2^{\nu j}$ and  $\s$ as above, whence 
\EQ{
2^{2j} M^{\frac32-\s} \le M=2^{j\nu} }
Thus Lemma~\ref{est IjM} implies that 
\EQ{
 |I_j^0(f,g)| \lec 2^{\nu j}\|f\|_{L^2_{t,x}}\|g\|_{L^2_{t,x}}.}
Using \eqref{alm ort}, we obtain 
\EQ{
 \pt \Big |\iint_0^t\LR{e^{i(t-s)\LH_0}U(t,s)wf(s)|wg(t)}\, dsdt \Big|\le \sum_{j=0}^\I|I_j^0(f,g)| 
 \pr\lec \sum_{j=0}^\I\sum_{|j-k|+|j-\ell |\le 2} 2^{\nu j}\|\De_kf\|_{L^2_{t,x}}\|\De_\ell g\|_{L^2_{t,x}}+2^{(\nu-1)j}\|f\|_{L^2_{t,x}}\|g\|_{L^2_{t,x}}
 \pr\lec \|f\|_{L^2_tH^{\nu}_x}\|g\|_{L^2_t L^2_x},}
which implies through duality 
\EQ{
 \pt \Big\|\int_0^t e^{i(t-s)\LH_0}U(t,s)f(s)\, ds \Big\|_{L^2_t L^{2,-\s}_x}
 \pn\lec \|\D^{\nu}\LR{x}^\s f\|_{L^2_{t,x}} \simeq \|\D^\nu f\|_{L^2_t L^{2,\s}_x},}
and the same estimate for the integral on $t<s<\I$. The remaining estimates follow by the usual duality argument. 
\end{proof}

\begin{lem}
For $\s>14$, one has 
\EQ{
 \|(T_0-T)g\|_{L^2_tL^{2,-\s}_x} \lec \|a\|_{L^\I_t}^{\frac14}\|g\|_{L^2_t L^{2,-\s}_x}.}
Moreover, if $\|a\|_{L^\I_t}$ is small enough, then $1-T$ is invertible on $L^2_tL^{2,-\s}_x$
as well as on $L^2_t \D^{\nu/2} L^{2,-\s}_x$ for $\nu>0$ small. 
\end{lem}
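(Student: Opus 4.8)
The plan is to first establish the bilinear bound
\[
|\langle (T_0-T)g,h\rangle| \lec \|a\|_{L^\infty_t}^{1/4}\,\|g\|_{L^2_tL^{2,-\s}_x}\,\|h\|_{L^2_tL^{2,\s}_x},
\]
and then deduce invertibility of $1-T$ from that of $1-T_0$ by a Neumann series. At the outset I record that $V=-i\LH_0P_d-P_d+i\K$ maps $L^{2,-\s}_x$ boundedly into $H^1_x\cap L^{2,N}_x$ for every $N$: the two $P_d$-terms have finite-dimensional Schwartz range, while $\K$ gains one derivative (from $\D^{-1}$) and localizes (from the $Q^2$ factor); in particular $\LR{x}^\s V$ is bounded $L^{2,-\s}_x\to H^1_x$. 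Writing $F:=Vg$ and using $(T_0-T)g=\int_0^t e^{(t-s)i\LH_0}(1-U(t,s))Vg(s)\,ds$ from \eqref{eq4 z}, I decompose $F=\sum_{j\ge0}\De_jF$ and, for each $j$, split the $s$-integral at a time $M_j\ge1$ (to be chosen as a function of $2^j$ and $\|a\|_{L^\infty_t}$) into a short-time piece $t-M_j<s<t$ and a tail $0<s<t-M_j$.

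For the short-time piece I use only that $e^{(t-s)i\LH_0}$ is an $L^2_x$-isometry, together with the translation estimate $\|(1-U(t,s))\De_jF(s)\|_{L^2_x}\lec\min(1,2^j|b(t,s)|)\|\De_jF(s)\|_{L^2_x}$ and $|b(t,s)|\le\|a\|_{L^\infty_t}|t-s|\le\|a\|_{L^\infty_t}M_j$; integrating over $|t-s|<M_j$ bounds the frequency-$j$ short-time contribution by $\min(1,2^j\|a\|_{L^\infty_t}M_j)\,M_j\,\|\De_jF\|_{L^2_{t,x}}\|h\|_{L^2_{t,x}}$. For the tail I apply Lemma~\ref{est IjM} (to each scalar entry of $e^{it\LH_0}=\operatorname{diag}(e^{it\D},e^{-it\D})$, the second by conjugation), both to the $T$-tail and, with $a=0$, to the $T_0$-tail, together with the almost-orthogonality \eqref{alm ort}; after summing the small commutator terms this bounds the frequency-$j$ tail contribution by $2^{2j}M_j^{3/2-\s}\bigl(\sum_{|k-j|\le1}\|\De_kF\|_{L^2_tL^{2,\s}_x}\bigr)\bigl(\sum_{|k-j|\le1}\|\De_kh\|_{L^2_tL^{2,\s}_x}\bigr)$, a bound that uses Lemma~\ref{est IjM} alone and not the size of $a$. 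Here $\LR{x}^\s V:L^{2,-\s}_x\to H^1_x$ is used again to see that $\|\De_kF\|_{L^2_tL^{2,\s}_x}\lec\min(1,2^{-k})\|g\|_{L^2_tL^{2,-\s}_x}$.

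The main work, and the step I expect to be the principal obstacle, is the choice of $M_j$ and the summation in $j$. Taking $M_j\simeq 2^{\be j}\|a\|_{L^\infty_t}^{-\al}$ with $\al=\tfrac14(\s-\tfrac32)^{-1}$ and $\be$ slightly larger than $4(\s-\tfrac32)^{-1}$, the tail contribution becomes $\lec 2^{-2j}\|a\|_{L^\infty_t}^{1/4}\,\bigl(\sum_{|k-j|\le1}\|\De_kF\|\bigr)\bigl(\sum_{|k-j|\le1}\|\De_kh\|\bigr)$, which sums by Cauchy--Schwarz to $\lec\|a\|_{L^\infty_t}^{1/4}\|F\|_{L^2_tL^{2,\s}_x}\|h\|_{L^2_tL^{2,\s}_x}$; for the short-time contribution one splits the $j$-sum at the frequency where $2^j\|a\|_{L^\infty_t}M_j\simeq1$ and uses $\|\De_jF\|_{L^2_x}\lec\min(\|F\|_{L^2_x},2^{-j}\|F\|_{H^1_x})$, after which elementary geometric sums show both pieces are $\lec\|a\|_{L^\infty_t}^{1/4}(\|F\|_{L^2_tH^1_x}+\|F\|_{L^2_tL^2_x})\|h\|_{L^2_tL^{2,\s}_x}$. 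The point is that this requires the exponent $2-\be(\s-\tfrac32)$ to be negative (tail summability) while the two short-time powers of $\|a\|_{L^\infty_t}$, namely $1-2\al-\tfrac{2\be(1-\al)}{1+\be}$ and $-\al+\tfrac{(1-\be)(1-\al)}{1+\be}$, are each at least $\tfrac14$; reconciling the $\|a\|$-small-but-derivative-losing short-time estimate with the $\|a\|$-blind-but-strongly-decaying tail estimate through the single cutoff $M_j$, while keeping the $j$-sum convergent using only the $H^1$-regularity that $V$ supplies, is exactly what forces $\s$ large, and the above exponents have the required signs comfortably once $\s>14$. Recalling $F=Vg$, this yields $\|T_0-T\|_{L^2_tL^{2,-\s}_x\to L^2_tL^{2,-\s}_x}\lec\|a\|_{L^\infty_t}^{1/4}$.

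Finally, for invertibility I write $1-T=(1-T_0)\bigl[1+(1+T_1)(T_0-T)\bigr]$, using from Lemma~\ref{inv 1+T0} that $1+T_1$ is the inverse of $1-T_0$ on $L^2_tL^{2,-\s}_x$. Since $1+T_1$ is bounded and $\|T_0-T\|\lec\|a\|_{L^\infty_t}^{1/4}$, the bracket is invertible by a Neumann series once $\|a\|_{L^\infty_t}$ is small, hence so is $1-T$. The same scheme applies on $L^2_t\D^{\nu/2}L^{2,-\s}_x$ for $\nu>0$ small: $T_0$ and $T_1$ are bounded there and still satisfy $(1-T_0)(1+T_1)=1$ (the factor $\D^{\nu/2}$ commutes with $e^{it\LH_0}$ and is harmless against the smoothing and localizing operator $V$, the commutator $[\D^{\nu/2},\LR{x}^{\pm\s}]$ being of lower order), and the bound for $T_0-T$ on this space follows from the identical argument — the decay $M^{3/2-\s}$ in Lemma~\ref{est IjM} leaves far more than enough room to absorb an extra factor $2^{\pm\nu j/2}$ — so the perturbation argument closes there as well.
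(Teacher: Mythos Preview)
Your proof is correct and follows essentially the same approach as the paper: dyadic frequency decomposition of $(T_0-T)g$, a short/long time split at a threshold $M_j$, the short part controlled via the size of $1-U(t,s)$ together with the derivative gain from $V$, the long part via Lemma~\ref{est IjM} (applied with and without $a$), and invertibility from Lemma~\ref{inv 1+T0} by a Neumann series. The paper's only simplification is to take the geometric mean of the two short-time bounds $\lec|t-s|\,\|a\|_{L^\infty}\|\fy\|_{L^{2,-\s}}$ and $\lec 2^{-j}\|\fy\|_{L^{2,-\s}}$ at the outset, obtaining $\lec|t-s|^{1/2}2^{-j/2}\|a\|_{L^\infty}^{1/2}$ and thus the single clean choice $M_j=2^{j/6}\|a\|_{L^\infty}^{-1/6}$, which replaces your more elaborate optimization and case split at $j_0$.
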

\begin{proof}
Decompose $T_0-T$ into the long and short time interactions as before: 
\EQ{
 \pt (T_0-T) g = T_S g + T_L g, 
 \pr T_S g:= \sum_{j=0}^{\I} T_{S}^{j}= \sum_{j=0}^{\I}  \int_{\max(t-M_{j},0)}^t e^{(t-s)i\LH_0}(1-U(t,s))\Delta_{j} V g(s)\, ds,}
where $M_{j}\gg 1$ will be chosen later and $\Delta_{j}$ is as above. $T_{L}^{j}$ is defined in a similar fashion.
We estimate $T_S^{j}$ in two ways: on the one hand,  by definition of $U(t,s)$,  and the fact that $V$ gains a derivative, 
\EQ{
 \pt\|e^{(t-s)i\LH_0}(1-U(t,s))\Delta_{j} V \fy\|_{L^2}
 \pn\le\|(1-U(t,s))V\fy\|_{L^2}
 \pr\le |b(t,s)|\|\na V\fy\|_{L^2} \le |t-s|\|a\|_{L^\I}\|\fy\|_{L^{2,-\s}},}
 for any $\s$, and on the other hand,
 \EQ{
 \pt\|e^{(t-s)i\LH_0}(1-U(t,s))\Delta_{j} V  \fy\|_{L^2} 
 \pn \le 2 \||\nabla|^{-1}\Delta_{j} \nabla V\fy\|_{2} 
  \lec 2^{-j} \| \fy \|_{L^{2,-\s}}
 }
 Hence, 
 \EQ{
  \pt\|e^{(t-s)i\LH_0}(1-U(t,s))\Delta_{j} V \fy\|_{L^2}
   \lec  |t-s|^{\frac12} 2^{-\frac{j}{2}} \|a\|_{L^\I}^{\frac12}  \|\fy\|_{L^{2,-\s}}
 }
By integration in $s$,   for any $\s\in\R$, 
\EQ{
 \|T_S^{j}g\|_{L^2_t L^2_x} \lec M_{j}^{\frac32} 2^{-\frac{j}{2}}\|a\|_{L^\I_t}^{\frac12}  \|g\|_{L^2_t L^{2,-\s}_x}.}
For $T_L^{j}$ we use Lemma \ref{est IjM}, with and without $a$ (or $a=0$), and $\s$ large. Taking account of the derivative gain from $V$ and its decay, we obtain 
\EQ{
 \|T_L^{j} g\|_{L^2_t L^{2,-\s}_x} \lec 2^{2j} M_{j}^{\frac32-\s}   \|g\|_{L^2_t L^{2,-\s}_x}.}
Hence, by choosing $M_{j}=2^{\frac{j}{6}}\|a\|_{L^\I}^{-\frac16}$, $\s>14$, and summing in $j\ge0$, we obtain 
\EQ{
 \|(T_S+T_L)g\|_{L^2_t L^{2,-\s}_x} \lec    \|a\|_{L^\I_t}^{\frac14}    \|g\|_{L^2_t L^{2,-\s}_x}.}
Since $1-T_0$ is invertible on $L^2_t L^{2,-\s}_x$ by Lemma \ref{inv 1+T0}, so is $1-T$ by the Neumann series, if $\|a\|_{L^\I_t}$ is small enough. 
\end{proof}

Now fix $\nu>0$ small and $\s$ large. Going back to the equation \eqref{eq4 z} of $z$, we obtain via the previous lemmas, 
\EQ{
 \|\D^{-\nu/2}z\|_{L^2_t L^{2,-\s}_x} \pt\lec \|\D^{-\nu/2}z_0\|_{L^2_t L^{2,-\s}_x}+\|\D^{\nu/2}\ti F\|_{L^2_t L^{2,\s}_x+L^1_tH^{-\nu/2}_x}
 \pr\lec \|z(0)\|_2+ \|a\|_{L^\I_t}\|\D^{-\nu/2}z\|_{L^2_t L^{2,-\s}_x} + \|F\|_{L^2_t \D^{-\nu/2} L^{2,\s}_x +L^1_tL^2_x},}
and so, for $\|a\|_{L^\I_t}$ small enough, 
\EQ{
 \|z\|_{L^2_t\D^{\nu/2} L^{2,-\s}_x} \lec \|z(0)\|_2  + \|F\|_{L^2_t \D^{-\nu/2} L^{2,\s}_x +L^1_tL^2_x}.}
On the other hand, applying the Strichartz estimate for $e^{i\LH_0t}$ (see, for example, \cite[Section 4]{ScatBlow}) to \eqref{eq3 z}, 
yields 
\EQ{
 \|z\|_{Stz} \pt\lec \|z_0\|_{Stz}+\|Vz+\ti F\|_{Stz^*}
 \pn\lec \|z(0)\|_2+\|z\|_{L^2_t \D^{\nu/2} L^{2,-\s}_x}+\|F\|_{Stz^*},}
where the Strichartz space is defined by 
\EQ{
 Stz:=L^\I_tL^2_x\cap L^2_tB^{-5/6}_{6,2}.} 
Thus, we obtain the following estimate as a conclusion to this section: 
\EQ{
 \|z\|_{Stz} \lec \|z(0)\|_2 + \|F\|_{L^1_tL^2_x+L^2_t(B^{5/6}_{6/5,2}\cap\D^{-\nu/2} L^{2,\s})_x}.}
%and by duality, 
%\EQ{
% \|z\|_{L^\I_tL^2_x\cap L^2_t(B^{-5/6}_{6,2}+\D^{-\nu/2} L^{2,\s})_x} \lec \|z(0)\|_2 + \|F\|_{Stz^*}.}
As far as  scattering  is concerned, we note that 
\EQ{
 \|z-z_\I\|_2 \to 0 \pq (t\to\I),
 \pq z_\I = z_0+\int_0^\I e^{i(t-s)\LH_0}U(t,s)\ti F(s)\, ds.}
Hence there is $\fy_+\in L^2$ such that 
\EQ{
 \|U(0,t)z(t)-e^{i\LH_0t}\fy_+  \|_2 \to 0.}
The following result summarizes the dispersive estimates of this section. 

\begin{prop} \label{prop:Stz ga}
Let $\nu>0$ be small and $\s$ large.  Then there exists a small $\de>0$ such that if $\|a\|_{L^\I_t}\le\de$, then the solution of 
\EQ{
 \ga_t = i\D\LL \ga + P_c[a(t)\cdot\na \ga + f], \pq \ga(0)=P_{c}\ga(0),}
satisfies on any interval $I\ni 0$, 
\EQ{
 \pt\|\ga\|_{L^\I_tL^2_x \cap L^2_t(B^{-5/6}_{6,2}\cap \D^{\nu/2}L^{2,-\s})_x} \lec \|\ga(0)\|_2 + \|f\|_{L^1_tL^2_x+L^2_t(B^{5/6}_{6/5,2}\cap\D^{-\nu/2} L^{2,\s})_x}, 
% \pr\|\ga\|_{L^\I_tL^2_x\cap L^2_t(B^{-5/6}_{6,2}+\D^{-\nu/2} L^{2,\s})_x}
% \lec \|\ga(0)\|_2 + \|f\|_{Stz^*},
}
and for some $\ga_+\in L^2$, and $b(t,0)=\int_0^t a(s)\, ds$, 
\EQ{
 \|\ga(t,x-b(t,0))-e^{i\D t}\ga_+\|_2 \to 0 \pq (t\to\I).}
\end{prop}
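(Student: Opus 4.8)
Since the equation and both conclusions are invariant under $t\mapsto-t$ (with $a\mapsto-a$), it suffices to work on $t\ge0$ and then restrict to the given interval $I$. The first move is to pass to the $\C$-linear matrix formulation of Subsection~\ref{ss:defs in Sect 9}: with $u=(\ga,\bar\ga)$ and $F=(f,\bar f)$ the equation becomes $u_t=i\LH u+P_c[a(t)\cdot\na u+F]$, $u(0)=P_cu(0)$; then embed it into the equation for $z$, namely $z_t=i\LH P_cz-P_dz+A(t)P_cz+F$ with $z(0)=u(0)$ and $A(t)\fy=a(t)\cdot\na\fy$, so that $u=P_cz$. Rewriting this as $z_t=[i\LH_0+V+A(t)]z+\ti F$, $V=-i\LH_0P_d-P_d+i\K$, $\ti F=-A(t)P_dz+F$, isolates the free matrix operator $\LH_0=\si_3\D$, which commutes with the (purely $t$-dependent) transport $A(t)$, so that $i\LH_0+A(t)$ has propagator $e^{(t-s)i\LH_0}U(t,s)$ with $U(t,s)\fy=\fy(\cdot+b(t,s))$. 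The Duhamel identity \eqref{eq3 z} then reads $(1-T)z=z_0+\int_0^te^{(t-s)i\LH_0}U(t,s)\ti F(s)\,ds$ as in \eqref{eq4 z}.

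The core of the proof is to invert $1-T$ in the weighted space $L^2_tL^{2,-\s}_x$ (and in $L^2_t\D^{\nu/2}L^{2,-\s}_x$). Here I would combine Lemma~\ref{inv 1+T0}, which gives $(1-T_0)^{-1}=1+T_1$, with the operator bound $\|T_0-T\|\lec\|a\|_{L^\I_t}^{1/4}$ already proved above; for $\de=\|a\|_{L^\I_t}$ small enough, $1-T=(1-T_0)\bigl(1-(1-T_0)^{-1}(T_0-T)\bigr)$ is then invertible by a Neumann series. Feeding $(1-T)^{-1}$ back into \eqref{eq4 z} and applying the key weighted dispersive estimate for $e^{it\LH_0}U(t,0)$ and its retarded version (proved via the $I_j^M$ bound of Lemma~\ref{est IjM}) yields $\|z\|_{L^2_t\D^{\nu/2}L^{2,-\s}_x}\lec\|z(0)\|_2+\|F\|_{L^2_t\D^{-\nu/2}L^{2,\s}_x+L^1_tL^2_x}$, once the $-A(t)P_dz$ contained in $\ti F$ is absorbed using $\|a\|_{L^\I_t}\ll1$ together with the smoothing and Schwartz decay of the discrete modes (for which one uses that $a$ is $x$-independent, so $\Om(a\cdot\na P_cz,\WV{g_\mp})=-\Om(P_cz,a\cdot\na\WV{g_\mp})$ loses no derivative).

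With the weighted bound in hand, I would upgrade to the Strichartz space $Stz=L^\I_tL^2_x\cap L^2_tB^{-5/6}_{6,2}$ by applying the free Klein--Gordon Strichartz estimate for $e^{it\LH_0}$ (as in \cite[\S4]{ScatBlow}) directly to \eqref{eq3 z}: $\|z\|_{Stz}\lec\|z(0)\|_2+\|Vz\|_{Stz^*}+\|\ti F\|_{Stz^*}$, where $\|Vz\|_{Stz^*}\lec\|z\|_{L^2_t\D^{\nu/2}L^{2,-\s}_x}$ (as $V$ gains a derivative and decays) and $\|\ti F\|_{Stz^*}\lec\|a\|_{L^\I_t}\|z\|_{L^2_t\D^{\nu/2}L^{2,-\s}_x}+\|F\|_{Stz^*}$; combined with the previous step and $\ga=(P_cz)_1$ this is the claimed estimate. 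For scattering, I would use the cocycle identity $U(0,t)U(t,s)=U(0,s)$ and the commutation of $U(0,t)$ with $e^{is\LH_0}$ to rewrite \eqref{eq3 z} as $e^{-it\LH_0}U(0,t)z(t)=z(0)+\int_0^te^{-is\LH_0}U(0,s)[Vz(s)+\ti F(s)]\,ds$; the integrand lies in $L^1_tL^2_x+L^2_t(B^{5/6}_{6/5,2}\cap\D^{-\nu/2}L^{2,\s})_x$, so the dual Strichartz estimate gives convergence in $L^2_x$ with vanishing tail, defining $\fy_+$ with $\|U(0,t)z(t)-e^{it\LH_0}\fy_+\|_2\to0$. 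Since $P_dz$ solves the damped equation $(P_dz)_t=-P_dz+P_d[A(t)P_cz+F]$ whose forcing is already controlled by the bounds above, $\|P_dz(t)\|_2\to0$; taking first components and using $U(0,t)\psi=\psi(\cdot-b(t,0))$ gives $\|\ga(t,\cdot-b(t,0))-e^{it\D}\ga_+\|_2\to0$ with $\ga_+=(\fy_+)_1$.

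\textbf{Main obstacle.} Everything hinges on inverting $1-T$ in a space compatible with the transport term, and the only genuine use of $\|a\|_{L^\I_t}\ll1$ is in the bound $\|T_0-T\|\lec\|a\|_{L^\I_t}^{1/4}$. Proving that bound is the technical heart: one must trade the derivative lost through $1-U(t,s)$ (of size $O(|t-s|\,\|a\|_{L^\I_t})$) against the dispersive decay of the Klein--Gordon kernel $K^j$, balancing the long-time cutoff $M$ dyadically in the frequency $2^j$, which forces the decay factor $M^{3/2-\s}$ of Lemma~\ref{est IjM} and the largeness of $\s$. A secondary, purely bookkeeping difficulty is that although $\ga(0)=P_c\ga(0)$, one has $P_dz(t)\ne0$ for $t>0$, so the reinstatement of $P_d$ and the final projection back by $P_c$ must be carried through every estimate (this is precisely why the enlarged $z$-equation is introduced).
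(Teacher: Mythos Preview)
Your proposal is correct and follows essentially the same route as the paper: the reduction to the matrix formulation and the extended $z$-equation, the inversion of $1-T$ via $(1-T_0)^{-1}=1+T_1$ together with the smallness bound $\|T_0-T\|\lec\|a\|_{L^\I_t}^{1/4}$, the weighted $L^2_t\D^{\nu/2}L^{2,-\s}_x$ control from the $I_j^M$ dispersive lemma with absorption of the $-A(t)P_dz$ piece of $\ti F$, and the upgrade to $Stz$ by free Strichartz on~\eqref{eq3 z} are all exactly what the paper does. Your scattering argument (pulling back by $e^{-it\LH_0}U(0,t)$, using the cocycle identity, and invoking dual Strichartz on the translated integrand, then removing $P_dz$ via its damped equation) is a slightly more explicit version of the paper's one-line claim $\|z-z_\I\|_2\to0$, but amounts to the same thing.
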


\end{document}